\documentclass{degruyter-proceedings}          %pdflatex
%\documentclass[dvips]{degruyter-proceedings}  %latex
%\documentclass[graybox, english]{svmult}
% choose options for [] as required from the list
% in the Reference Guide

%\usepackage{mathptmx}       					% selects Times Roman as basic font
%\usepackage{helvet}         					% selects Helvetica as sans-serif font
%\usepackage{courier}        					% selects Courier as typewriter font
%\usepackage{type1cm}        					% activate if the above 3 fonts are not available on your system
%
%\usepackage{refcheck}
\usepackage{graphicx}        					% standard LaTeX graphics tool
%                             						% when including figure files
\usepackage{multicol}        					% used for the two-column index
\usepackage[bottom]{footmisc}					% places footnotes at page bottom
\usepackage{dsfont} 						% mathds font to work
\usepackage{amsmath}
\usepackage{amsfonts} 							
\usepackage{newtxmath}						% for \boldsymbol to work 
\usepackage{amssymb} 						% for \nsubseteq to work
\usepackage{color}
\definecolor{gray(x11gray)}{rgb}{0.75, 0.75, 0.75}
\usepackage{booktabs} 						% correct spacing in tables
\usepackage{multirow}						% multirows in tables
\usepackage{subfig}						% subfloat
\usepackage{capt-of}

\usepackage[normalem]{ulem} 					%for strikeout of the text

\usepackage{colortbl}						% colouring the tables columns
\definecolor{gainsboro}{rgb}{0.86, 0.86, 0.86}
\definecolor{lightgray}{rgb}{0.83, 0.83, 0.83}
\definecolor{silver}{rgb}{0.75, 0.75, 0.75}
\definecolor{ashgrey}{rgb}{0.7, 0.75, 0.71}
\definecolor{battleshipgrey}{rgb}{0.52, 0.52, 0.51}

\usepackage{tikz}							% tikz pictures 
\usetikzlibrary{arrows,chains,matrix,positioning,scopes}

\def \LMRspacetimeaxis#1 {
\begin{tikzpicture}
  	\node[anchor=south west,inner sep=0] (img) at (0,0) {#1};
	\node[below=of img, node distance=0cm, yshift=1.05cm, xshift=0cm, font=\color{gray}] {$x$};
  	\node[left=of img, node distance=0cm, rotate=90, anchor=center, yshift=-0.8cm,font=\color{gray}] {$t$};
\end{tikzpicture}
}
%
% definition of the theorems 
%\newtheorem{theorem}{Theorem}{\bf}{\it}
%\newtheorem{lemma}{Lemma}{\bf}{\it}
%\newtheorem{remark}{Remark}{\bf}{\it}

% see the list of further useful packages
% in the Reference Guide

\makeindex             % used for the subject index
                       % please use the style svind.ist with
                       % your makeindex program
%

\newcommand{\sveta}[1]{{{#1}}}
\definecolor{green}{rgb}{0.09, 0.45, 0.27}

%%%%%%%%%%%%%%%%%%%%%%%%%%%%%%%%%%%%%%%%%%%%%%%%%%%%%%%%%%%%%%%%%%%%%%%%%%%%%%%%%%%%%%%%%%
%
%\begin{document}

\title{Adaptive Space-Time Isogeometric Analysis for Parabolic Evolution Problems}
\headlinetitle{Adaptive Space-Time IgA of Parabolic Evolution Problems}

\lastnameone{Langer}
\firstnameone{Ulrich}
\nameshortone{U. Langer}
\addressone{RICAM Linz, Johann Radon Institute, Linz}
\countryone{Austria}
\emailone{ulrich.langer@ricam.oeaw.ac.at}

\lastnametwo{Matculevich}
\firstnametwo{Svetlana}
\nameshorttwo{S. Matculevich}
\addresstwo{RICAM Linz, Johann Radon Institute, Linz}
\countrytwo{Austria}
\emailtwo{smatculevich@ricam.oeaw.ac.at}

\lastnamethree{Repin}
\firstnamethree{Sergey}
\nameshortthree{S. Repin}
\addressthree{University of Jyvaskyla, Jyvaskyla; Peter the Great St.Petersburg Polytechnic University, Polytechnicheskaya, 29, St.Petersburg}
\countrythree{Finland; Russia}
\emailthree{serepin@jyu.fi}

\keywords{
parabolic initial-boundary value problems, 
locally stabilized space-time isogeometric analysis, 
a priori and 
a posteriori estimates of approximation errors}

\classification{2010 MSC: 35K20, 65M15, 65M60, 65M55}

\researchsupported{The research is supported by the Austrian Science Fund (FWF) through the NFN S117-03 project.}
\acknowledgments{
We would like to thank A. Mantzaflaris for his permanent support of the open-source C++ library 
G+Smo \cite{LMRgismoweb2015} that was used to 
implement  the adaptive space-time IgA schemes and 
to perform all the numeral tests presented in this work.}
%-------------------------------------------------------------------------%
% Please do not alter the following line.                                 %
\firstpage{1}
%%-------------------------------------------------------------------------%

%\title{Adaptive Space-Time Isogeometric Analysis of Parabolic Evolution Problems}
%% Use \titlerunning{Short Title} for an abbreviated version of
%% your contribution title if the original one is too long
%\author{Ulrich Langer, Svetlana Matculevich, and Sergey Repin}
%\authorrunning{U. Langer, S. Matculevich, and S. Repin} 
%
%\institute{Ulrich Langer \at RICAM Linz, Johann Radon Institute, Linz, Austria, \email{ulrich.langer@ricam.oeaw.ac.at} \and
%Svetlana Matculevich \at RICAM Linz, Johann Radon Institute, Linz, Austria, \email{smatculevich@ricam.oeaw.ac.at} \and 
%Sergey Repin \at University of Jyvaskyla, Finland \email{sergey.s.repin@jyu.fi}; 
%{ Peter the Great St.Petersburg Polytechnic University, St.Petersburg, Russia}}

%----------------------------------------------------------------------------------------------------------------------------%

%\maketitle

\abstract{
The paper is concerned with locally stabilized space-time IgA approximations to initial boundary value 
problems of the parabolic type. Originally, similar schemes (but weighted with a global 
mesh parameter) were presented and studied by U.~Langer, 
M.~Neu\-m\"uller, and S.~Moore (2016). 
The current work devises a localised version of this scheme. 
%and  {establishes} 
The localization of the stabilizations enables local mesh refinement
that is one of the main ingredients of adaptive algorithms.
We establish coercivity, 
boundedness, and consistency of the corresponding bilinear form. Using these fundamental properties 
together with the corresponding approximation error estimates for B-splines, 
we show that the space-time IgA solutions generated by the new scheme satisfy 
asymptotically optimal a priori discretization error estimates. 
%Adaptive mesh refinement algorithm 
{The adaptive mesh refinement algorithm proposed in the paper}
is based on a posteriori error estimates
of the functional type   that has been 
rigorously studied in earlier works by S.~Repin (2002) and U.~Langer, S.~Matculevich, and S.~Repin (2017).  
Numerical results presented in 
%the second part of 
the paper confirm the improved convergence of global approximation errors. 
%Also, the results 
{Moreover, these results also}
confirm local efficiency of the error indicators produced by the error majorants.}

\begin{document}

%%%%%%%%%%%%%%%%%%%%%%%%%%%%%%%%%%%%%%%%%%%%%%%%%%%%%%%%%%%%%%%%%%%%%%%%%%%%%%%%%%%%%%%

\section{Introduction}
Time-dependent problems governed by parabolic partial differential equations (PDEs) 
are typical models 
in many scientific and engineering applications, e.g.,
%processes with slow evolution such as transient 
heat conduction and diffusion, 
changing in time processes in social and life sciences, etc.
This fact triggers their active investigation in modelling,
mathematical analysis, and numerical solution. 
%The paper 
%is focused on the numerical analysis of parabolic problems performed by means of {\em isogeometric (IgA) methods} 
%\cite{LMRBazilevsetal2006, LMRBeiraodaVeigaBuffaRivasSangalli2011} combined with the {\em space-time approach}
%{ \cite{LMRGander2015, LMRHackbusch1984,LMRLubichOstermann1987}},
%which treats time as yet another dimension.
This paper is focused on the numerical treatment of parabolic problems  by means of
{\em Isogeometric Analysis (IgA)} \cite{LMR:HughesCottrellBazilevs:2005a}
%, introduced in \cite{LMR:HughesCottrellBazilevs:2005a},
combined with a full  {\em space-time approach} 
that treats time as yet another variable; 
see \cite{LMRGander2015}  and \cite{LMR:SteinbachYang:2018a}
for time-parallel and space-time methods.
Due to the fast development of parallel computers, this approach
to quantitative analysis of evolutionary problems has became quite natural. Moreover, this way
of treating evolutionary systems is not affected by the curse of sequentiality 
typical for  time-marching schemes. 
Various versions of the space--time method can be efficiently used in combination with parallelisation methods; see, e.g., 
 \cite{LMRGander2015, LMRGanderNeumueller2016a, LMRLangerMooreNeumueller2016a,LMRHoferLangerNeumuellerToulopoulos2017}.

{This paper uses} the idea similar to that applied
in \cite{LMRLangerMooreNeumueller2016a} for the derivation of the globally stabilized space-time 
scheme. It is based on testing the corresponding integral identity with the help of
`time-upwind' test functions, which are motivated by the space-time 
streamline diffusion method studied in \cite{LMRHansbo1994, LMRJohnson1987, LMRJohnsonSaranen1986}.
In contrast to the scheme presented in \cite{LMRLangerMooreNeumueller2016a}, this work 
is focused on element-wise analysis  that leads to a {locally stabilized} space-time IgA scheme.
 
{One of the attractive features of the  IgA method is high accuracy and flexibility of approximations obtained due to 
the high smoothness 
of the respective basis functions.} This fact allows a user to combine space-time schemes with IgA 
technologies, and construct fully-adaptive 
%and heavily parallelized 
schemes aiming 
to tackle problems \sveta{generated by} industrial applications; several earlier studied examples
can be found in \cite{LMRTakizawaTezduyar2011,LMRTakizawaTezduyar2014}. 

%----------------------------------------------------------------------------------------------------------------------------%
Construction of effective adaptive refinement techniques is highly important for the design of fast 
and efficient {numerical methods for solving} PDEs. Adaptivity %, in turn, 
relies strongly on the reliable and locally quantitatively efficient a posteriori error estimation. 
We refer to  \cite{LMRAinsworthOden2000, LMRBangerthRannacher03,LMRRepin2008, LMRMalietall2014} 
for the overview of different error estimators. An efficient error indicator supposes to identify the areas, 
where discretization errors are excessively high,
in order to refine the mesh and minimise local errors. A smart combination of 
solvers and error indicators could potentially provide a fully automated refinement algorithm taking into 
account special features of the problem, and generating a discretisation 
that produces approximate solutions with the desired accuracy. 

%----------------------------------------------------------------------------------------------------------------------------%
Due to a tensor-product setting of IgA splines, mesh refinement has global effects, including a large 
percentage of superfluous control points. Challenges, arising along with these disadvantages, 
have triggered the development of local refinement techniques for IgA, such as 
{\em truncated B-splines} (T-splines) (introduced in \cite{LMRSederbergetall2003, LMRSederbergetall2004} 
and analysed in \cite{LMRBazilevsetall2010, LMRBeiraodaVeigaetall2011, LMRScottetall2011, LMRScottetall2012}), 
{\em hierarchical} (HB-splines) \cite{LMRForseyBartels1988, LMRKraft1997} and 
{\em truncated hierarchical B-splines} (THB-splines) \cite{LMRVuongetall2011, LMRGiannelliJuttlerSpeleers2012}, 
{\em patchwork splines} (PB-splines) \cite{LMREngleitnerJuttler2017}, 
{\em locally refined splines} (LR-splines) \cite{LMRDokkenLychePettersen2013, LMRBressan2013}, 
{\em polynomial splines over hierarchical T-meshes} (PHB-splines) \cite{LMRNguyenThanhetall2011, 
LMRWangetall2011}, etc.
%
%
%----------------------------------------------------------------------------------------------------------------------------%
{In the case of elliptic boundary value problems, local}
%Local 
refinement IgA techniques were combined with some a posteriori error estimation approaches 
in several publications (a posteriori error estimates using the hierarchical bases 
in \cite{LMRDorfelJuttlerSimeon2010, LMRVuongetall2011}, residual-based a posteriori error estimators
and their modifications in \cite{LMRJohannessen2009, LMRWangetall2011, LMRBuffaGiannelli2015, 
LMRKumarKvamsdalJohannessen2015}, and goal-oriented error estimators in 
\cite{LMRZeeVerhoosel2011, LMRDedeSantos2012, LMRKuru2013, LMRKuruetall2014}).

In this paper, we deduce 
%two-sided and 
fully guaranteed error 
estimates in terms of several global norms equivalent to the norm of the functional space
containing the corresponding generalised solution. These estimates 
do not use mesh-dependent constants (which must be recalculated in the process of mesh adaptation),
and include only global constants characterising the geometry.
Henceforth, we shortly call them error majorants.% and minorants.
%----------------------------------------------------------------------------------------------------------------------------%
A posteriori error estimates of this type  were originally introduced in 
\cite{LMRRepin1997, LMRRepin1999} and later applied to various problems; see 
\cite{LMRRepin2008, LMRMalietall2014} and reference therein. { 
These estimates are valid for any approximation from the admissible functional space. 
They do not use special properties of approximations (e.g., Galerkin orthogonality) or/and additional 
requirements for the exact solution (e.g., extra regularity beyond the minimal 
energy class that guarantees the existence of the unique generalised solution) 
and are valid for any approximation from the admissible functional 
space. Moreover, the majorant also generates
efficient  indicators  of local (element-wise) error distribution over the domain. }

{We present a new localised space-time IgA scheme, where the adaptivity is driven 
by the functional type a posteriori error estimates. By exploiting the universality and 
efficiency of these error estimates as well as taking an advantage of smoothness of the IgA 
approximations, we aim at constructing fast fully adaptive 
%efficient parallelized 
space-time methods that 
could tackle complicated problems inspired by industrial applications. 
%----------------------------------------------------------------------------------------------------------------------------%
These two techniques were already combined in application to elliptic problems in \cite{LMRKleissTomar2015} 
and \cite{LMRMatculevich2017} using tensor-based splines and THB-splines \cite{LMRGiannelliJuttlerSpeleers2012, 
LMRGiannelliJuttlerSpeleers2014, LMRGiannellietall2016}, respectively. Both papers
confirmed that the majorants provide not only reliable and efficient upper bounds of the total energy 
error but a quantitatively sharp indicator of local element-wise errors. 

For the time-dependent problems, the simplest form of such error bounds was derived for the heat equation 
in \cite{LMRRepin2002} and tested for the generalised diffusion equation in \cite{LMRGaevskayaRepin2005}. 
Majorants for approximations to the evolutionary convection-diffusion problem 
having jumps in time were considered in \cite{LMRRepinTomar2010}. In \cite{LMRMatculevichRepin2014}, authors 
study the majorant's robustness to a drastic change in values of the reaction parameter in 
evolutionary reaction-diffusion problems and provide the comparison of upper bound to newly 
introduced minorant of the error. Another extensive discussion on the numerical properties of the 
above-mentioned error estimates w.r.t. both time-marching and space-time methods can be found in \cite{LMRHolmMatculevich2017}.

Paper \cite{LMRLangerMatculevichRepinArxiv2017}, that proceeds the current study, 
presents new functional-type a posteriori error estimates in a context of globally weighed space-time 
IgA schemes introduced in \cite{LMRLangerMooreNeumueller2016a}. It 
illustrates the reliability and efficiency of functional a posterior error estimates for IgA solutions w.r.t 
several examples exhibiting different features and reports on the computing cost for these bounds. 
Moreover, the numerical examples discussed in \cite{LMRLangerMatculevichRepinArxiv2017} 
demonstrate the efficiency of the space-time THB-spline-based adaptive procedure. Therefore, 
the importance of locally stabilized space-time IgA schemes
%derivation, 
as well as the investigation of 
their numerical properties are rather inevitable in the context of the construction of fully 
adaptive schemes for initial-boundary value problems (I-BVPs).
}

This work is organized as follows: Section \ref{sec:variational-fomulation} defines  the  model
evolutionary  problem and recapitulates  notation and functional spaces used throughout the paper. 
Section \ref{eq:preliminaries} presents a concise overview of the IgA framework  and respective notions and
definitions. 
%Also, 
Furthermore,
it presents the globally stabilized space-time IgA scheme from \cite{LMRLangerMooreNeumueller2016a}
and discusses its main properties. Section \ref{sec:localized-scheme} 
introduces the new locally stabilized version of the space-time IgA scheme, and provides the 
proofs of coercivity, 
boundedness, and consistency of the bilinear form corresponding to the IgA scheme. 
We also establish a priori error estimates for the considered class of approximations.
% from the corresponding IgA spaces. 
The last section is dedicated to a posteriori estimates and practical aspects of the efficient combination 
of {locally stabilized} scheme and functional error majorants as well as their application 
to a series of numerical examples possessing different features.

%%%%%%%%%%%%%%%%%%%%%%%%%%%%%%%%%%%%%%%%%%%%%%%%%%%%%%%%%%%%%%%%%%%%%%%%%%%%%%%%%%%%%%%

\section{Space-time variational formulation}
\label{sec:variational-fomulation}

{Let 
$\Omega \subset \mathds{R}^d$, $d \in \{1, 2, 3\}$ be a bounded domain with Lipschitz
continuous boundary 
$\partial \Omega$ and  $(0, T)$, 
$0 < T < +\infty$ be  a given time interval.
By
$Q := \Omega \times (0, T)$ and $\overline{Q} := Q \cup \partial Q$ we denote the space-time cylinder 
and its closure, respectively. The lateral surface 
of $Q$ is defined as $\partial Q := \Sigma \cup \overline{\Sigma}_{0} \cup \overline{\Sigma}_{T}$, where 
$\Sigma = \partial \Omega \times (0, T)$,  $\Sigma_{0} =  \Omega \times \{0\}$ and 
$\Sigma_{T} = \Omega \times \{T\}$. 
%
%To make the presentation more transparent, 

We discuss an approach to  adaptive space-time 
IgA approximations of evolutionary problems using the classical model of the {\em linear parabolic initial-boundary value 
problem}: find $u: \overline{Q} \rightarrow \mathds{R}$ satisfying the equations
\begin{alignat}{2}
\partial_t u - \Delta_x u= f \quad {\rm in} \quad Q, \qquad 
u = 0 \quad {\rm on} \quad \Sigma, \qquad 
u = u_0 \quad {\rm on} \quad { \overline{\Sigma}_0},
\label{eq:equation}
%\\[-2pt]
%u & = 0 \;\;\qquad {\rm on} \quad \Sigma, \label{eq:Dirichlet-boundary-condition} \\[-2pt]
%u & = u_0 \qquad {\rm on} \quad { \overline{\Sigma}_0}, \label{eq:initial-condition}
\end{alignat}
where $\partial_t$ denotes the time derivative, 
$\Delta_x$ is the spatial Laplace operator, 
$f$ is a source function, and 
$u_0(x)$ is a given initial state. 

{Let us now introduce the functions spaces that we need in the following.
The norm and scalar product in the Lebesgue space $L_{2}(Q)$ of square-integrable functions  
in the space-time cylinder $Q$
are denoted by 
$\| \, v \, \|_{Q} := \| \, v \, \|_{L_{2}(Q)}$ and
$(v, w)_Q := \int_Q v(x,t) w(x,t) {{\rm d}x {\rm d}t}$, $\forall v, w \in L_{2} (Q),$ 
respectively, with the corresponding changes for spaces of vector-valued fields.
By $H^{s}(Q)$, $s \geq 1$, we denote standard Sobolev spaces 
%of functions having generalised square-summable 
%derivatives of the order $s$ with respect to (w.r.t.) space and time and 
supplied with the norm 
$\| v \|_{H^s(Q)} := \Big(\int_Q \sum_{|\alpha| \leq s} \partial^{\alpha} v dxdt \Big)^{1/2}$
for $s \in \mathds{N} \cup 0$, where $\alpha := \{\alpha_1, \ldots, \alpha_d\}$ is a multi-index, and 
$\partial^\alpha v := {\partial^{|\alpha|} v}/{\partial^{\alpha_1}_1\, ..., \partial^{\alpha_d}_d}$. 
Then, $| v |_{H^s(Q)} := \Big(\int_Q \sum_{|\alpha| = s} \partial^{\alpha} v dxdt \Big)^{1/2}$
denotes the $H^s$-seminorm. 
Next, we introduce the following spaces
\begin{equation*}
\begin{alignedat}{3}
V^{1, 0}_{0} := H^{1, 0}_{0}(Q) & := \Big\{\, u \in L_{2}(Q) && : 
\! \nabla_x u \in [L_{2}(Q)]^d, u = 0 \; \mbox{on} \; {\Sigma} \, \Big\}, \\
%
%V^1_{0} := H^{1}_{0}(Q) & := \big\{ u \in V^{1, 0}_{0} && : 
%\partial_t u \in L_{2}(Q) \big\}, \\
%
V^1_{0, \overline{0}} := H^{1}_{0, \overline{0}}(Q) & := \big\{\, u \in V^{1, 0}_{0} && : 
\partial_t u \in L_{2}(Q), \, u = 0 \; \mbox{on} \; {\Sigma_T} \, \big\}, \\
V^1_{0,\underline{0}} := H^{1}_{0, \underline{0}} (Q) & := \big\{ \,u \in  V^{1, 0}_{0} && : 
\partial_t u \in L_{2}(Q), \, u = 0 \; \mbox{on} \; {\Sigma_0} \, \big\}, %\\
\end{alignedat}
\label{eq:spaces-1}
\end{equation*}
and
\begin{equation*}
\begin{alignedat}{3}
%V^{\Delta_x, 1}_{0} := H^{\Delta_x, 1}_{0, \overline{0}}(Q) & := \big\{u \in  V^{1, 0}_{0} && :
%\, \Delta_x u \in L_{2}(Q), \, \partial_t u \in L_{2}(Q) \big\},
V^{\Delta_x, 1}_{0} := H^{\Delta_x, 1}_{0}(Q) & := \big\{u \in  V^{1, 0}_{0} && :
\, \Delta_x u \in L_{2}(Q), \, \partial_t u \in L_{2}(Q) \big\},
%\label{eq:spaces}
\end{alignedat}
\end{equation*}
where the latter is equipped with the norm 
$\| w \|^2_{V^{\Delta_x, 1}_{0}} := \| \Delta_x w \|^2_Q + \| \partial_t w \|^2_Q$.
Finally, 
$$V^s_{0, \underline{0}} := H^{s}(Q) \cap V^{1}_{0, \underline{0}}$$ 
and
$$%\begin{alignat*}{2}
	H^{{\rm div}_x, 0}(Q) := 
	\Big \{ \, \boldsymbol{y} \in [L_{2}(Q)]^d \;:\; 
	         {\rm div}_x \boldsymbol{y} \in L_{2} (Q) \,
	\Big \}
	$$
%	\label{eq:y-div}
%\end{alignat*}
%
%\vskip -5pt
equipped with a scalar product 
$(\boldsymbol{v}, \boldsymbol{w})_{{\rm div}_x, 0} := 
(\boldsymbol{v}, \boldsymbol{w})_Q + ({\rm div}_x \boldsymbol{v}, {\rm div}_x \boldsymbol{w})_Q$.
Since $\Omega$ is bounded, we have the Friedrichs inequality 
$\| w \|_\Omega \leq {C_{{\rm F}}} \, \| \nabla_x w \|_\Omega$ for all $w \in H^{1}_0(\Omega)$, 
which also implies 
$\| w \|_Q \leq {C_{{\rm F}}} \, \| \nabla_x w \|_Q$ for all $w \in V^{1, 0}_0(Q)$.

It is proven
in \cite{LMRLadyzhenskaya1985} that the standard space-time variational 
formulation of the initial-boundary value problem (\ref{eq:equation}), 
find $u \in V^{1, 0}_{0}$
\begin{equation}
a(u, w) = \ell(w), \quad \forall w \in V^1_{0, \overline{0}} ,
\label{eq:variational-formulation}
\end{equation}
with the bilinear form
\begin{equation*}
	a(u, w) := 
	%\big(\nabla_x {u}(x, t), \nabla_x {w}(x, t)\big)_Q - \big(u(x, t), \partial_t w(x, t)\big)_Q, 
	(\nabla_x {u}, \nabla_x {w})_Q - (u, \partial_t w)_Q,
	%\label{eq:bilinear-form}
\end{equation*}
and the linear form
\begin{equation*}
\ell(w) := (f, {w})_Q + (u_0, {w})_{\Sigma_0},
%\label{eq:linear-functional}
\end{equation*}
has a unique solution provided that 
$f \in L_{2,1}(Q) := \Big\{ v \in  L_{1}(Q) \, |  \, \int_0^T \| v(t) \|_{\Omega} \, dt < \infty \Big\}$
and $u_0 \in L_{2}(\Omega)$.
Here and later on,  
$(u_0, w)_{\Sigma_0} := \int_{\Sigma_0} u_0(x) \, {w}(x,0) dx = \int_{\Omega} u_0(x) \,{w}(x,0) dx$.
Moreover, if $f \in L_{2}(Q)$ and $u_0 \in H^{1}_{0}(\Omega)$, then 
%the
problem \eqref{eq:variational-formulation} is uniquely solvable in $V^{\Delta_x, 1}_0$, 
and the solution $u$ continuously depends on $t$ in the norm of the space $H^{1}_{0}(\Omega)$ 
(see, e.g., \cite{LMRLadyzhenskaya1954} and \cite[Theorem 2.1]{LMRLadyzhenskaya1985}). 
Furthermore, according to \cite[Remark 2.2]{LMRLadyzhenskaya1985}, $\| \, u_{x} (\cdot, t) \, \|^2_{\Omega}$ 
is an absolutely continuous function of $t \in [0, T]$ for any $u \in V^{\Delta_x, 1}_0$. 

Throughout the paper, we assume that $f \in L_{2}(Q)$ 
and $u_0 \in H^{1}_{0}(\Sigma_0)$, % \cap H^{\Delta_x}_{0}(\Sigma_0)$, %=  H^{1}_{0}(\Omega)$,
i.e., we know that the solution $u$ of the space-time variational problem 
\eqref{eq:variational-formulation} belongs to $V^{\Delta_x, 1}_0$.
In this case, without a loss of generality, we can assume homogeneous initial conditions $u_0 = 0$;
cf. also \cite{LMRHoferLangerNeumuellerToulopoulos2017}.
}

\section{IgA framework}
\label{eq:preliminaries}

For the convenience of the reader, we recall the general concept of the IgA {technology}, the definition 
of B-splines, NURBS, and THB-splines, and their use in the geometrical representation of the space-time 
cylinder $Q$, as well as the construction of the IgA trial {and discretization} spaces, 
which are used to approximate solutions 
satisfying the variational formulation of \eqref{eq:variational-formulation}.

%----------------------------------------------------------------------------------------------------------------------------%
Let $p \geq 2$ be the polynomial degree, and let $n$ denote the number of basis functions used to construct 
a $B$-spline curve. The knot-vector in one dimension is a non-decreasing set of coordinates in the 
parameter domain, written as $\Xi = \{ \xi_1, \ldots, \xi_{n+p+1}\}$, $\xi_i \in \mathds{R}$, 
where $\xi_1 = 0$ and $\xi_{n+p+1} = 1$. The knots can be repeated, and the multiplicity of the $i$-th 
knot is indicated by $m_i$. Throughout the paper, we consider only open knot vectors, i.e., {the multiplicity 
$m_1$ and $m_{n+p+1}$ of the first and the last knots, respectively, is equal to $p+1$.}
In the case of the one-dimensional parametric domain $\hat{Q} = (0, 1)$, there is an underlying mesh 
of elements $\hat{K} \in \mathcal{\hat{K}}_h$ such that each of them is 
constructed by the distinct neighbouring knots. The global size of $ \mathcal{\hat{K}}_h$ is denoted by 
$\hat{h} := \max\limits_{\hat{K} \in \mathcal{\hat{K}}_h} \{ {\hat{h}}_{\hat{K}}\}, 
\, \mbox{where} \,  
{\hat{h}}_{\hat{K}} := {\rm diam} (\hat{K}).$
%
%{ TODO:
%There should be some condition that prevents degeneration of time-space
%simplexes}
%
For the time being, we assume locally quasi-uniform meshes, 
i.e., the ratio of two neighbouring elements $\hat{K}$ and $\hat{K}'$ satisfies the inequality 
$c_1 \leq {{\hat{h}}_{\hat{K}}}/{{\hat{h}}_{\hat{K}'}} \leq c_2$, where $c_1, c_2$ {are positive constants}.

The univariate B-spline basis functions ${\hat{B}}_{i, p}: \hat{Q} \rightarrow \mathds{R}$ are defined by 
means of Cox-de Boor formula 
%
%\begin{alignat}{2}
${\hat{B}}_{i, p} (\xi) := \tfrac{\xi - \xi_i}{\xi_{i+p} - \xi_i} \, {\hat{B}}_{i, p-1} (\xi)
                         + \tfrac{\xi_{i+p+1} - \xi}{\xi_{i+p+1} - \xi_{i+1}} {\hat{B}}_{i+1, p-1} (\xi),$ %
%\label{eq:b-splines}
%\end{alignat}
%
with 
$\hat{B}_{i, 0} (\xi) \, := 
\big\{ 1 \; \mbox{if} \; \xi_i \leq \xi \leq \xi_{i+1},  \; \mbox{and} \; 0 \; \mbox{otherwise} \big\},$
%
%----------------------------------------------------------------------------------------------------------------------------%
where a division by zero is defined to be zero. One of the most crucial properties of these basis functions 
is their $(p-m_i)$-times continuous differentiability across the $i$-th knot with multiplicity $m_i$. Hence, 
if $m_i = 1$ for every inner knot, then B-splines of the degree $p$ are $C^{p-1}$ continuous. For the knots 
lying on the boundary of the parametric domain, the multiplicity is $p + 1$, which makes 
the B-spline discontinuous on the patch interfaces. 
We note that analysis provided in this paper is valid for domains represented by a single-patch. 
Extensions to the multi-patch case will be considered in the subsequent paper.

{We now consider} the multivariate B-splines on the space-time parameter domain \linebreak 
$\hat{Q} := (0, 1)^{d+1}$, $d = \{1, 2, 3\}$, as a tensor-product of the corresponding univariate B-splines. 
For that, we define the knot-vector dependent on the space-time direction 
$\Xi^\alpha := \{ \xi^\alpha_1, \ldots, \xi^\alpha_{n^\alpha+p^\alpha+1}\}$, 
$\xi^\alpha_i \in \mathds{R}$, where $\alpha = 1, \ldots, d+1$ is the index indicating the direction. 
%----------------------------------------------------------------------------------------------------------------------------%
Furthermore, we introduce 
${\mathcal{I}} = \big\{\, i = (i_1,  \ldots, i_{d+1}): i_\alpha = 1, \ldots, n_\alpha; 
\alpha = 1, \ldots, d+1 \big\},$ the set used to number basis number functions,
and multi-indices standing for the order of polynomials $p := (p_1, \ldots, p_{d+1})$. 
The tensor-product of the univariate B-spline basis functions generates a multivariate splines defined as
%
%\begin{equation}
${\hat{B}}_{i, p} ({\boldsymbol \xi}) 
:= \prod\limits_{\alpha = 1}^{d+1} {\hat{B}}_{i_\alpha, p_\alpha} (\xi^\alpha), 
\; \mbox{where} \; {\boldsymbol \xi} = (\xi^1, \ldots, \xi^{d+1}) \in \hat{Q}.$
%\label{eq:mutlivariable}
%\end{equation}
%
%----------------------------------------------------------------------------------------------------------------------------%
The univariate and multivariate NURBS basis functions are defined in the parametric domain by means 
of the corresponding B-spine $\big\{ {\hat{B}}_{i, p} \big\}_{i \in {\mathcal{I}}}$. 
For the given $p := (p_1, \ldots, p_{d+1})$ and for any $i \in {\mathcal{I}}$, NURBS are defined as follows:
%
%\begin{equation}
${\hat{R}}_{i, p}: \hat{Q} \rightarrow \mathds{R}$, 
${\hat{R}}_{i, p} ({\boldsymbol \xi}) 
:= \tfrac{w_i \, {\hat{B}}_{i, p} ({\boldsymbol \xi})}{W({\boldsymbol \xi})},$
%\end{equation}
%
with a weighting function
%
%\begin{equation}
$W: \hat{Q} \rightarrow \mathds{R}$,  
$W({\boldsymbol \xi}) := \sum\limits_{i \in {\mathcal{I}}} w_i \, {\hat{B}}_{i, p} ({\boldsymbol \xi}),$
%\end{equation}
%
where $w_i >0$ are real numbers and $\sum\limits_{i \in {\mathcal{I}}} w_i = 1$. 

%----------------------------------------------------------------------------------------------------------------------------%
In the association with the knot-vectors $\Xi^\alpha$, $\alpha = 1, \ldots, d+1$, we define a mesh 
$\mathcal{\hat{K}}_h$ partitioning $\hat{Q}$ into $d+1$-dimensional open knot spans 
(elements)
$$\mathcal{\hat{K}}_h 
= \mathcal{\hat{K}}_h(\Xi^1, \ldots, \Xi^{d+1}) 
:= \Big\{ \, \hat{Q} = \otimes^{d+1}_{\alpha = 1} (\xi^{\alpha}_{i_\alpha},  \xi^{\alpha}_{i_{\alpha+1}}) \;:\; 
\hat{Q} \neq \mbox{\O}, \;
p^\alpha+1 \leq i^\alpha \leq n^\alpha - 1
\, \Big\}.$$
A non-empty element $\hat{K} = \otimes^{d+1}_{\alpha = 1} 
(\xi^{\alpha}_{i_\alpha},  \xi^{\alpha}_{i_{\alpha+1}}) \in \mathcal{\hat{K}}_h$
is characterized by its diameter ${\hat{h}}_{\hat{K}}$. To $\hat{K}$, we associate 
$\underline{\hat{K}} \in \hat{Q}$ defined as
$$\underline{\hat{K}} = \otimes^{d+1}_{\alpha = 1} 
 (\xi^{\alpha}_{i_\alpha - p_\alpha},  \xi^{\alpha}_{i_{\alpha} + p_\alpha + 1}) \in \mathcal{\hat{K}}_h.$$
The set $\underline{\hat{K}}$ represents the support extension of $\hat{K}$ and is constructed 
by the union of the supports of basis functions intersecting with ${\hat{K}}$.

%----------------------------------------------------------------------------------------------------------------------------%
The physical space-time domain $Q \subset \mathds{R}^{d+1}$ is defined from the parametric domain 
$\hat{Q} = (0, 1)^{d+1}$ by the geometrical mapping:
\begin{equation*}
\Phi: \hat{Q} \rightarrow Q := \Phi(\hat{Q} ) \subset \mathds{R}^{d+1}, \quad 
\Phi({\boldsymbol \xi}) := \sum\limits_{i \in \mathcal{I}} {\hat{R}}_{i, p}({\boldsymbol \xi}) \, {\bf P}_i,
%\label{eq:geom-mapping}
\end{equation*}
where 
$\{{\bf P}_i\}_{i \in \mathcal{I}} \in \mathds{R}^{d+1}$ are the control points. For the simplicity, 
we assume below the same polynomial degree for all directions, i.e., 
$p_{\alpha} = p$, for all $\alpha = 1,  ... , d+1$.%

For each $\hat{K} \in \mathcal{\hat{K}}_h$ and $\underline{\hat{K}}$, 
we obtain an element and a support extension on the physical domain 
%
%is mapped into an element\\[-5pt] 
%
\begin{equation}
K = \Phi(\hat{K}) := \Big\{ \, \Phi(\xi) \, : \, \xi \in \hat{K}) \, \Big\} 
\quad
\mbox{and}
\quad
\underline{K} := \Phi(\underline{\hat{K}}),
\label{eq:physical-element-support-ext}
\end{equation}
respectively.
The physical mesh $\mathcal{K}_h$ is defined on the space-time cylinder $Q$ as follows 
$$\mathcal{K}_h := \big\{K = \Phi(\hat{K}) : \hat{K} \in \mathcal{\hat{K}}_h \big\}.$$
The global mesh size is denoted by
\begin{equation}
h := \max\limits_{K \in \mathcal{K}_h} \{ \, h_{K}\,\}, \quad  
h_{K} := \| \nabla_x\Phi \|_{L_\infty (K)} {\hat{h}}_{\hat{K}}.
\label{eq:global-mesh-size}
\end{equation}
Moreover, we assume that the physical mesh is also quasi-uniform, i.e., 
\begin{equation}
h_{K} \leq h \leq C_u \, h_{K}.
\label{eq:hk-and-h-relation}
\end{equation}

The set of {facets} corresponding to the discretisation $\mathcal{K}_h$ is denoted by 
$\mathcal{E}_h$ and can be split into the inner {facets}
$$\mathcal{E}^{I}_h = \{E \in \mathcal{E}_h^{K}: \exists \; 
K, K' \in  \mathcal{K}_h: E = \partial K \cap \partial K' \; \wedge \; E  \not\subset \partial Q\},$$
and
the {facets} intersecting with the boundary, namely,
$$\mathcal{E}^{\partial Q}_h = \{E \in \mathcal{E}_h^{K}:  \exists  \;K, K' \in  \mathcal{K}_h: 
E = \partial K \cap \partial K'  \;\wedge\; E \cap \partial Q \neq \mbox{\O}\,\}.$$
The latter one, in particular, contains inside the sets
\begin{alignat*}{2}
\mathcal{E}^{\Sigma}_h & = \{E \in \mathcal{E}_h^{K}:  \exists  \; K, K' \in \mathcal{K}_h: 
E = \partial K \cap \partial K'  \;\wedge\; E \cap \Sigma \neq \mbox{\O} \} 
\quad 
\mbox{and} \quad \\
\quad 
\mathcal{E}^{\Sigma_T}_h & = \{E \in \mathcal{E}_h^{K}:  \exists  \; K, K' \in \mathcal{K}_h: 
E = \partial K \cap \partial K'  \;\wedge\; E \cap \Sigma_T \neq \mbox{\O}\}.
\end{alignat*}
Let $\mathcal{E}^K_h$ denote the set of {facets} of the local element $K \in \mathcal{K}_h$, 
i.e.,
$$\mathcal{E}^K_h := \{ E \in \mathcal{E}_h : E \cap \partial K \neq \mbox{\O}, K \in \mathcal{K}_h \}.$$
%----------------------------------------------------------------------------------------------------------------------------%
%
The discretisation spaces on $Q$ are constructed by a push-forward of the basis functions defined on the 
parametric domain
\begin{equation}	
V_h := {\rm span} \, 
\Big\{ 
\phi_{h, i} := \hat{S}^{p}_h \circ \Phi^{-1}, 
\;\; (\ell, \boldsymbol{i})  \in \boldsymbol{\mathcal{I}} \, 
\Big\}_{i \in \mathcal{I}},
\label{eq:vh-v0h}
\end{equation}
where ${\hat{\mathcal{S}}}^{p}_h$ is the space of splines (e.g., B-splines, NURBS, THB-splines) of a degree $p$,
and $\Phi$ is assumed to be invertible in $Q$, with smooth inverse on each 
element $K \in \mathcal{K}_h$ (see \cite{LMRBazilevsetal2006, LMRBeiraodaVeigaBuffaSangalliVazquez2014} 
and references therein). Moreover, we introduce the subspace \linebreak
$V_{0h} := V_h \cap V^1_{0, \underline{0}}$ 
for the functions satisfying homogeneous initial and boundary conditions.

%----------------------------------------------------------------------------------------------------------------------------%
Let us recall two fundamental inequalities, i.e., scaled trace and inverse inequalities, that are 
important for the derivation of a priori {discretization} error estimates for the space-time IgA scheme 
presented in the further sections.
\begin{lemma}{\cite[Theorem 3.2]{LMREvansHughes2013}}
\label{lm:lemma-2}
Let $K \in \mathcal{K}_h$. Then the {\bf \em scaled trace inequality} 
\begin{equation}
\| v \|_{\partial K} \leq C_{t\!r\!} \, h_{K}^{-1\!/_2} (\| v \|_{K} + h_{K} \, \| \nabla v \|_{K})
\label{eq:trace-inequality}
\end{equation}
holds for all $v \in H^{1}(K)$, where $\nabla = (\nabla_x, \partial_t )$, $h_K$ is a local mesh size (cf. \eqref{eq:global-mesh-size}), and 
$C_{t\!r\!}$ is a positive constant independent of $K \in \mathcal{K}_h$.
\end{lemma}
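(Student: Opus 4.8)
The plan is to prove the scaled trace inequality
\eqref{eq:trace-inequality} by reducing it, via the geometrical mapping $\Phi$, to
a reference trace inequality on a fixed domain and then tracking how the constants scale
with $h_K$. First I would recall the classical (unscaled) trace inequality on the
reference element $\hat K \in \mathcal{\hat K}_h$: for every $\hat v \in H^1(\hat K)$ one has
$\| \hat v \|_{\partial \hat K} \leq \hat C \, ( \| \hat v \|_{\hat K} + | \hat v |_{H^1(\hat K)} )$,
where $\hat C$ depends only on the shape of $\hat K$. Because the parametric mesh is
locally quasi-uniform, all reference elements are shape-regular, so $\hat C$ may be taken
uniform over $\hat K \in \mathcal{\hat K}_h$. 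The crucial point is that this inequality on a
fixed-size reference cell carries \emph{no} mesh-size factors; those will appear only when
we pull everything back to the physical element $K = \Phi(\hat K)$.

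Next I would transport the inequality to the physical element by the change of variables
$v = \hat v \circ \Phi^{-1}$. The plan here is to insert the Jacobian of $\Phi$ and its
inverse into the boundary and volume integrals. Denoting $J = \det \nabla_{\boldsymbol\xi}\Phi$,
one has $\mathrm{d}x\,\mathrm{d}t = J \, \mathrm{d}{\boldsymbol\xi}$ on $K$ and a corresponding surface
Jacobian on $\partial K$; moreover $\nabla \hat v = (\nabla_{\boldsymbol\xi}\Phi)^{\top} (\nabla v \circ \Phi)$,
so $| \hat v |_{H^1(\hat K)}$ is controlled by $\| \nabla v \|_K$ times a power of
$\| \nabla_x \Phi \|_{L_\infty(K)}$ and of $\| (\nabla_x\Phi)^{-1} \|_{L_\infty}$. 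Collecting
the Jacobian factors and invoking the definition $h_K = \| \nabla_x \Phi \|_{L_\infty(K)} \hat h_{\hat K}$
from \eqref{eq:global-mesh-size}, together with the quasi-uniformity assumptions
\eqref{eq:hk-and-h-relation} and the local quasi-uniformity of $\mathcal{\hat K}_h$,
the scaling in $h_K$ emerges exactly as $h_K^{-1/2}$ in front of
$(\| v \|_K + h_K \| \nabla v \|_K)$, with all remaining constants absorbed into a single
$C_{tr}$ that is independent of $K$.

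The main obstacle I anticipate is bookkeeping the Jacobian bounds uniformly over the mesh
rather than any deep analytic difficulty. Concretely, one must bound $\| \nabla_{\boldsymbol\xi}\Phi \|_{L_\infty}$
and $\| (\nabla_{\boldsymbol\xi}\Phi)^{-1} \|_{L_\infty}$, and the ratio of surface to volume
Jacobians, independently of $K$; this is where the smoothness and invertibility of $\Phi$
on each element (assumed after \eqref{eq:vh-v0h}) and the quasi-uniformity
\eqref{eq:hk-and-h-relation} are indispensable, since they guarantee that these quantities
are comparable to fixed powers of $h_K / \hat h_{\hat K}$ with constants uniform in $K$.
Since the statement is quoted from \cite[Theorem 3.2]{LMREvansHughes2013}, I would in practice
cite that reference for the technical estimates and present only the scaling argument above;
the essential content is that the single factor $h_K^{-1/2}$ and the weighting of the gradient
term by $h_K$ are precisely what the change of variables produces.
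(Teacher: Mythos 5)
First, a point of comparison: the paper does not prove this lemma at all; it is quoted verbatim from \cite[Theorem 3.2]{LMREvansHughes2013}. So your attempt is measured against the standard scaling argument that this reference makes rigorous. Your overall strategy (reference element, change of variables, uniform Jacobian bounds) is the right one, but it contains a genuine gap precisely in the step that is supposed to produce the mesh-size scaling. You assert that the unscaled trace inequality $\|\hat v\|_{\partial\hat K}\le \hat C\,(\|\hat v\|_{\hat K}+|\hat v|_{H^1(\hat K)})$ holds on the parametric elements $\hat K\in\mathcal{\hat K}_h$ with a constant $\hat C$ ``depending only on the shape'' of $\hat K$, uniform over the mesh, and you call $\hat K$ a ``fixed-size reference cell.'' It is not fixed-size: $\hat K$ is a knot span of diameter $\hat h_{\hat K}$, which shrinks to zero under refinement, and the unscaled trace constant cannot be uniform over such a family. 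Testing with $\hat v\equiv 1$ on a box of side $\ell$ gives $\|\hat v\|_{\partial\hat K}\sim \ell^{d/2}$ while $\|\hat v\|_{\hat K}\sim \ell^{(d+1)/2}$, so necessarily $\hat C\gtrsim \ell^{-1/2}$; shape regularity controls the shape, not this size dependence. The step as written is therefore false.

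Consequently, the mesh-size factor cannot ``appear only when we pull everything back to the physical element.'' The geometry map $\Phi$ is a single fixed parametrization of the patch, unchanged under mesh refinement, so $\|\nabla_{\boldsymbol{\xi}}\Phi\|_{L_\infty}$ and $\|(\nabla_{\boldsymbol{\xi}}\Phi)^{-1}\|_{L_\infty}$ are $O(1)$ quantities independent of $h$; by \eqref{eq:global-mesh-size} this makes $h_K$ and $\hat h_{\hat K}$ comparable up to $h$-independent constants. A change of variables whose Jacobians are bounded above and below uniformly in $h$ can only alter constants; it cannot manufacture the factor $h_K^{-1/2}$. The correct bookkeeping is the opposite of yours: the scaling is generated at the parametric level, by mapping the shrinking knot span $\hat K$ affinely onto the fixed unit cube $(0,1)^{d+1}$, applying the trace inequality there with a genuinely fixed constant, and scaling back, which yields $\|\hat v\|_{\partial\hat K}\le C\,\hat h_{\hat K}^{-1/2}\big(\|\hat v\|_{\hat K}+\hat h_{\hat K}\,|\hat v|_{H^1(\hat K)}\big)$; only then does one transport this estimate through $\Phi$, where the $O(1)$ Jacobian bounds and $h_K\sim\hat h_{\hat K}$ convert $\hat h_{\hat K}$ into $h_K$ and give \eqref{eq:trace-inequality}. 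With that reordering (and uniform control of the surface-to-volume Jacobian ratios, which your last paragraph correctly identifies as the technical burden), your argument becomes the standard proof underlying the cited result.
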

%\begin{proof}
%See \cite[Theorem 3.2]{LMR:EvansHughes2013}.
%\end{proof}

%----------------------------------------------------------------------------------------------------------------------------%
\begin{lemma}{\cite[Theorem 4.1]{LMRBazilevsetal2006} 
%and 
%\cite[Theorem 4.2]{LMREvansHughes2013}
}
\label{lm:lemma-3}
Let $K \in \mathcal{K}_h$. Then the {\bf \em inverse inequalities}
\begin{alignat}{2}
\| \nabla_xv_h\|_{K} & \leq C_{i\!n\!t\!,\!1} \,h_{K}^{-1} \, \| v_h \|_{K}, \quad \mbox{and}
\label{eq:int-inequality-1} \\
%\end{equation}
%
%and 
%
%\begin{equation}
\| v_h \|_{\partial K} &  \leq C_{i\!n\!t, 0} \,h_{K}^{-1\!/_2} \, \| v_h \|_{K}
\label{eq:int-inequality-0}
\end{alignat}
hold for all $v_h \in V_{h}$, where $C_{i\!n\!t, 0}$ and $C_{i\!n\!t\!,\!1}$ are positive  constants 
independent of $K \in \mathcal{K}_h$, and $h_K := {\rm diam}_{ K \in \mathcal{K}_h }$ is a local mesh 
size.

%----------------------------------------------------------------------------------------------------------------------------%
%\begin{remark}
%Since $\nabla_xv_h = (\nabla_x v_h, \partial_t v_h)$, \eqref{eq:int-inequality-1} yields inequalities with 
%partial derivatives in space and time\\[-10pt]
%%
%\begin{alignat*}{2}
%\| \partial_{x_i} v_h\|_{K} & \leq C_{i\!n\!t\!,\!1} \,h_{K}^{-1} \, \| v_h\|_{K}, \quad i = 1, \ldots, d, 
%\quad \mbox{and} \quad 
%%
%\| \partial_t v_h\|_{K} & \leq C_{i\!n\!t\!,\!1} \,h_{K}^{-1} \, \| v_h\|_{K}.
%\end{alignat*}
%%
%For the anisotropic case, the constant $C_{inv, 1}$ will include information on space or time direction, i.e.,\\[-10pt]
%%
%\begin{alignat}{2}
%\| \partial_{x_i} v_h\|_{K} & \leq C^{x_i}_{1, K} \,h_{K}^{-1} \, \| v_h\|_{K} \quad \mbox{and} \quad 
%%
%\| \partial_t v_h\|_{K} & \leq C^{t}_{1, K} \,h_{K}^{-1} \, \| v_h\|_{K}.
%\end{alignat}
%%
%\end{remark}

\end{lemma}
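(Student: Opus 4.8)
The plan is to establish both inequalities by the classical scaling argument that reduces an estimate on a physical element $K$ to norm equivalences on a fixed reference element, where the finite-dimensionality of the local spline space is decisive (there is no gradient term on the right-hand side, so the estimate cannot hold on all of $H^1$, only on the discrete space $V_h$). First I would pull $v_h$ back to the parametric domain by setting $\hat v := v_h \circ \Phi$, so that $\hat v$ restricted to $\hat K$ lies in the finite-dimensional space of (rational) splines of degree $p$. By the chain rule the physical spatial gradient $\nabla_x v_h$ and the parametric gradient $\hat\nabla\hat v$ are related through the Jacobian $D\Phi$; using the assumed invertibility of $\Phi$ with smooth inverse on each element, together with the $L_\infty$ bounds on $D\Phi$ and $(D\Phi)^{-1}$ and on the Jacobian determinant, the physical $L_2$ and gradient norms are comparable to their parametric counterparts up to factors controlled by $\|\nabla_x\Phi\|_{L_\infty(K)}$. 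These factors are exactly what produce the mesh-size weight $h_K$ through the definition \eqref{eq:global-mesh-size}.

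Next I would map the parametric element $\hat K$ affinely onto the unit reference cube $\tilde K=(0,1)^{d+1}$. This affine scaling transfers $\hat h_{\hat K}$ into the reference geometry: the $L_2$ norm scales like $\hat h_{\hat K}^{(d+1)/2}$, the gradient seminorm like $\hat h_{\hat K}^{(d+1)/2-1}$, and the boundary $L_2$ norm like $\hat h_{\hat K}^{d/2}$, so the ratio of gradient to $L_2$ picks up precisely one negative power of $\hat h_{\hat K}$ and the ratio of trace to $L_2$ one half of a negative power. On $\tilde K$ the transported functions span a finite-dimensional space determined only by $p$ and $d$; hence all norms on this space are equivalent, and in particular $|\tilde v|_{H^1(\tilde K)}\le \tilde C\,\|\tilde v\|_{L_2(\tilde K)}$ and $\|\tilde v\|_{L_2(\partial\tilde K)}\le \tilde C'\,\|\tilde v\|_{L_2(\tilde K)}$ with $\tilde C,\tilde C'$ depending only on $p$ and $d$.

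Combining the two changes of variables and invoking the quasi-uniformity relations (local quasi-uniformity of $\mathcal{\hat K}_h$ together with \eqref{eq:hk-and-h-relation}) to replace every occurrence of $\hat h_{\hat K}$ and of the mapping factors by the single physical scale $h_K$, I obtain the full inverse inequality $\|\nabla v_h\|_K \le C\,h_K^{-1}\|v_h\|_K$ with $\nabla=(\nabla_x,\partial_t)$ and a constant $C$ that aggregates $\tilde C$, the bounds on $D\Phi$ and $(D\Phi)^{-1}$, and the quasi-uniformity constants, all independent of $K$. Its spatial component is exactly \eqref{eq:int-inequality-1}. For the trace estimate \eqref{eq:int-inequality-0} there are two equivalent routes: either repeat the reference-element argument with the trace norm equivalence above, or—more economically—feed the full inverse inequality into the scaled trace inequality of Lemma \ref{lm:lemma-2}: since $h_K\|\nabla v_h\|_K\le C\|v_h\|_K$, inequality \eqref{eq:trace-inequality} collapses to $\|v_h\|_{\partial K}\le C_{t\!r\!}(1+C)\,h_K^{-1/2}\|v_h\|_K$, which is \eqref{eq:int-inequality-0} with $C_{i\!n\!t, 0}:=C_{t\!r\!}(1+C)$.

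The main obstacle is the NURBS (rational) case and the geometry mapping, rather than the affine scaling itself. For genuine NURBS the transported functions on $\tilde K$ are rational, and the norm equivalence must be applied to the rational space; one then needs uniform upper and lower bounds on the weight function $W$ and control of its derivatives on each element so that the constants $\tilde C,\tilde C'$ remain independent of $K$. Likewise the chain-rule step requires $\Phi$ and $\Phi^{-1}$ to have derivatives bounded uniformly over the element family, which is the standing regularity assumption on the geometry. Making these dependencies explicit, and verifying that they are absorbed into $K$-independent constants $C_{i\!n\!t\!,\!1}$ and $C_{i\!n\!t, 0}$, is where the bulk of the technical care lies.
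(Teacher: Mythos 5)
The paper does not actually prove this lemma at all---it is quoted verbatim from \cite[Theorem 4.1]{LMRBazilevsetal2006}---and your argument (pull-back through $\Phi$ using the bounds on $D\Phi$, $(D\Phi)^{-1}$ and the NURBS weight $W$, affine rescaling to the reference cube, norm equivalence on the fixed finite-dimensional element-wise polynomial/rational space, and quasi-uniformity to express all factors through $h_K$) is precisely the standard scaling proof used in that reference, so it is correct in approach and substance. Your shortcut of obtaining \eqref{eq:int-inequality-0} by inserting the full space-time version of \eqref{eq:int-inequality-1} into the scaled trace inequality of Lemma \ref{lm:lemma-2}, giving $\| v_h \|_{\partial K} \leq C_{t\!r\!}(1+C)\, h_K^{-1/2}\, \| v_h \|_K$, is also legitimate and is the more economical route to the second inequality.
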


%----------------------------------------------------------------------------------------------------------------------------%
For the completeness, we recall fundamental results on the approximation properties of spaces generated 
by NURBS using \cite[Section 3]{LMRBazilevsetal2006}. %and \cite[Section 4]{LMRTagliabueDedeQuarteroni2014}. 
It states the existences of a projection operator that provides an asymptotically optimal approximation result.
%
%In particular, Lemma \ref{lm:lemma-5-bsplines} provides .
%%
%\begin{lemma}
%\label{lm:lemma-5-bsplines}
%Let $l, s \in \mathds{N}$ be $0 \leq l \leq s \leq p+1$, and $w \in H^s_{0, \underline{0}}(Q)$. 
%Then, there exists a projection operator 
%$\Pi_h: H^s_{0, \underline{0}}(Q) \rightarrow V_{0h}$ and a positive constant $C_s$ such 
%that 
%%
%\begin{equation}
%\sum_{K \in \mathcal{K}_h} | w - \Pi_h w|^2_{H^\ell(K)} \leq C^2_s h^{2(s-\ell)} \| w \|^2_{H^{s}(Q)},
%\label{eq:c-s-interpolation-estimate-glob}
%\end{equation}
%%
%where $h$ is a global mesh size defined by (\ref{eq:global-mesh-size}) and $C_s$ is a constant 
%only dependent on degrees $s, l,$ and $p$, the shape regularity of $Q$, described by $\Phi$ and 
%its gradient. Here, $| \cdot |_{H^\ell(K)}$ denotes $H^\ell$-seminorm for $\ell \in \mathds{N} \cup 0$.
%\end{lemma}
%\begin{proof}
% The proof follows the lines of \cite[Subsection 3.3, 3.4]{LMRBazilevsetal2006}, 
% \cite[Proposition 3.1]{LMRTagliabueDedeQuarteroni2014}, and 
% \cite[Chapter 4]{LMRBeiraodaVeigaBuffaRivasSangalli2011}.
%\end{proof}
%%
%If the multiplicity of each inner knot $m_i \leq p + 1 - \ell$, $\Pi_h w$ belongs to $H^{s}_{0, \underline{0}}(Q)$.
%Then, Lemma \ref{lm:lemma-5} yields the global estimate
%%
%\begin{equation}
%| w - \Pi_h w|_{H^\ell(Q)} \leq C_s h^{(s-l)} \| w \|_{H^{s}(Q)}.
%\label{eq:global-approximtion-estimate}
%\end{equation}
%%
%However, the localised approximation results is formulated slightly different (see Lemma 
%\ref{lm:lemma-5}).
%
\begin{lemma}{\cite[Theorem 3.1]{LMRBazilevsetal2006}}
\label{lm:lemma-5}
Let $\ell, s \in \mathds{N}$ be $0 \leq \ell \leq s \leq p+1$, $u \in V^s_{0, \underline{0}}$, and 
$K$ and $\underline{K}$ are elements defined in \eqref{eq:physical-element-support-ext}.
Then there exists a projection operator 
$\Pi_h: V^s_{0, \underline{0}} \rightarrow V_{0h}$ and a constant $C_s >0$ such that
\begin{equation}
|v - \Pi_h v|^2_{H^\ell (K)}
\leq C^2_{l\!,s} h^{2(s-\ell)}_K \, 
       \sum_{i = 0}^{s} c_K^{2(i-\ell)} \, | v |^2_{H^{i}(\underline{K})}, \quad 
       \forall v \in L_2(Q) \cap H^{\ell}(\underline{K}),
\label{eq:c-s-interpolation-estimate}
\end{equation}
 %
%----------------------------------------------------------------------------------------------------------------------------%
where $C_{l\!,s}$ is a dimensionless shape constant dependent on $s, \ell,$ and $p$, 
the shape regularity of $K$, described by $\Phi$ and its gradient, 
$h_K$ is a local mesh size (cf. \eqref{eq:global-mesh-size}), and 
$c_K := \| {\nabla_x\Phi} \|_{ L_{\infty}({\Phi}^{-1}(\underline{\hat{K}}))}$.
\end{lemma}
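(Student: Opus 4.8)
The plan is to transport the entire estimate to the parametric domain $\hat{Q}$, where the spline space $\hat{\mathcal{S}}^{p}_h$ is a genuine tensor-product polynomial-spline space, prove a local Bramble--Hilbert-type bound there, and then push the result forward through the geometric map $\Phi$ to the physical element $K$ from \eqref{eq:physical-element-support-ext}, tracking carefully how Sobolev seminorms scale under $\Phi$.

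First I would construct the operator. On $\hat{Q}$ I take a \emph{local} spline quasi-interpolant $\hat{\Pi}_h$ onto $\hat{\mathcal{S}}^{p}_h$, built from dual functionals supported on the support extension $\underline{\hat{K}}$, so that $(\hat{\Pi}_h\hat{v})|_{\hat{K}}$ depends only on $\hat{v}|_{\underline{\hat{K}}}$ and $\hat{\Pi}_h$ reproduces every polynomial of degree $\le p$ on each $\hat{K}$. For NURBS I route this through the weight $W$: project $W\hat{v}$ with the B-spline quasi-interpolant and divide by $W$, which reproduces the NURBS spanning $\hat{\mathcal{S}}^{p}_h$. Since the knot vectors are open, the boundary splines are interpolatory, so choosing the functionals to vanish on them makes $\hat{\Pi}_h$ preserve the homogeneous conditions on $\Sigma$ and $\Sigma_0$; the physical operator $\Pi_h v := (\hat{\Pi}_h(v\circ\Phi))\circ\Phi^{-1}$ then maps $V^{s}_{0,\underline{0}}$ into $V_{0h}$, as required.

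Next I would prove the parametric estimate. Scaling $\hat{K}$ to unit size, polynomial reproduction together with the Deny--Lions/Bramble--Hilbert lemma yields $|\hat{v}-\hat{\Pi}_h\hat{v}|_{H^{\ell}(\hat{K})}\le C\,\hat{h}_{\hat{K}}^{\,s-\ell}\,|\hat{v}|_{H^{s}(\underline{\hat{K}})}$ for all $0\le\ell\le s\le p+1$, the power of $\hat{h}_{\hat{K}}$ coming from the standard scaling argument and $C$ depending only on $p,s,\ell$ and the local quasi-uniformity constants $c_1,c_2$.

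The last and hardest step is the change of variables through $\Phi$. Transforming the left-hand seminorm $|v-\Pi_h v|_{H^{\ell}(K)}$ back to $\hat{K}$ introduces factors of derivatives of $\Phi^{-1}$ and of the Jacobian of $\Phi$, while transforming the right-hand seminorm $|\hat{v}|_{H^{s}(\underline{\hat{K}})}$ back to $\underline{K}$ requires the Fa\`{a} di Bruno chain rule: an $s$-th parametric derivative of $\hat{v}=v\circ\Phi$ is a sum of physical derivatives of $v$ of all orders $i\le s$ multiplied by products of derivatives of $\Phi$. Bounding the first-order factors by $c_K=\|\nabla_x\Phi\|_{L_\infty(\Phi^{-1}(\underline{\hat{K}}))}$ and absorbing the higher derivatives of $\Phi$ and $\Phi^{-1}$ into a dimensionless shape constant converts the single-order parametric bound into the full weighted sum $\sum_{i=0}^{s}c_K^{2(i-\ell)}|v|^2_{H^{i}(\underline{K})}$ of \eqref{eq:c-s-interpolation-estimate}: the mixing over all intermediate orders $i$ is precisely the footprint of the nonlinear map. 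Finally, using $h_K=\|\nabla_x\Phi\|_{L_\infty(K)}\hat{h}_{\hat{K}}$ from \eqref{eq:global-mesh-size} together with the quasi-uniformity \eqref{eq:hk-and-h-relation} turns the parametric factor $\hat{h}_{\hat{K}}^{\,s-\ell}$ into $h_K^{\,s-\ell}$ and collects the remaining geometric constants into $C_{l,s}$. The main obstacle is exactly this bookkeeping: controlling the chain-rule products uniformly in $K$ relies on the smoothness and invertibility of $\Phi$ on each element and is where every geometry-dependent constant is generated.
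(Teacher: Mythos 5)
The paper itself offers no proof of this lemma: it is quoted, with citation, from \cite[Theorem 3.1]{LMRBazilevsetal2006}, so the ``paper's own proof'' is a reference to that work. Your sketch reconstructs essentially the argument given there --- the push-forward of a weight-corrected local spline quasi-interpolant (project $W\hat v$, divide by $W$), a Bramble--Hilbert/polynomial-reproduction bound on the parametric support extension, and the Fa\`a di Bruno chain-rule bookkeeping through $\Phi$ that generates precisely the factor $h_K^{2(s-\ell)}$ and the weighted sum $\sum_{i=0}^{s} c_K^{2(i-\ell)}\,|v|^2_{H^{i}(\underline{K})}$ --- so it matches the canonical proof in both construction and structure.
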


%----------------------------------------------------------------------------------------------------------------------------%
Unlike the classical finite element spaces of degree $p$, Lemma \ref{lm:lemma-5} provides the bound, 
where the $\ell^{th}$-order seminorm of the error $u - \Pi_h u$ is controlled by the full $s^{th}$-order 
norm of $u$. In particular, the following formulations of \eqref{eq:c-s-interpolation-estimate} will be 
used:
\begin{alignat}{2}
\| u - \Pi_h u \|^2_{L_2(K)}
& \leq C^2_{0,s} h^{2s}_K \, 
       \sum_{i = 0}^{s} c_K^{2i} \, | u |^2_{H^{i}(\underline{K})},
        \label{eq:c-s-interpolation-estimate-v1}\\
|u - \Pi_h u|^2_{H^1 (K)}
& \leq C^2_{1,s} h^{2(s-1)}_K \, c_K^{-2}
       \sum_{i = 0}^{s} c_K^{2i} \, | u |^2_{H^{i}(\underline{K})}, 
       \label{eq:c-s-interpolation-estimate-v2}\\
|u - \Pi_h u|^2_{H^2 (K)}
& \leq C^2_{2,s} h^{2(s-2)}_K \, c_K^{-4}
       \sum_{i = 0}^{s} c_K^{2i} \, | u |^2_{H^{i}(\underline{K})}, 
       \label{eq:c-s-interpolation-estimate-v3} 
\end{alignat}
for any $v \in L_2(Q) \cap H^{\ell}(\underline{K})$.

%----------------------------------------------------------------------------------------------------------------------------%
Globally stabilized space-time IgA scheme for parabolic equations have been presented and analysed in 
\cite{LMRLangerMooreNeumueller2016a}, where the authors proved its efficiency for fixed and moving 
spatial computational domains. In particular, it was shown that the corresponding discrete bilinear form is 
elliptic w.r.t. a discrete energy norm, bounded, consistent, and that generated IgA approximations
satisfy a priori discretisation error estimate. In order to derive a {\em globally stabilized discrete 
IgA space-time scheme}, the authors considered time-upwind test function 
%
%\begin{equation}
$v_h + \delta_{h} \, \partial_t v_h$, $\delta_{h} = \theta h$, $v_h \in V_{0h},$
%\end{equation}
%
such that $\theta >0$ is an auxiliary constant and $h$ is the global mesh-size (cf. \eqref{eq:global-mesh-size}).
This implies the discrete stabilized space-time {IgA scheme}: find $u_h \in V_{0h}$ satisfying 
\begin{alignat}{2}
a_{h} (u_h, v_h) = \, & l_h(v_h), \quad \forall v_h \in V_{0h},
\label{eq:discrete-scheme} 
\end{alignat}
where 
\begin{alignat*}{2}
a_{h} (u_h, v_h) 
:= \, & (\partial_t u_h, v_h)_Q \!
+ (\nabla_x {u_h}, \nabla_x {v_h})_Q \!
+ \delta_{h} \Big( (\partial_t u_h, \partial_t v_h)_Q \!
			+ (\nabla_x {u_h}, \partial_t (\nabla_x v_h))_Q \Big),\\
l_h(v_h) := \, & (f, v_h + \delta_{h} \, \partial_w v_h)_Q.
\end{alignat*}
%
%{ Full proofs} 
%of the below-presented properties of \eqref{eq:discrete-scheme} can be found in 
%\cite{LMRLangerMooreNeumueller2016a} and \cite{LMRLangerMatculevichRepinArxiv2016}.
%In particular, $V_{0h}$-coercivity of $a_{h}(\cdot, \cdot): V_{0h} \times V_{0h} \rightarrow \mathds{R}$ 
%w.r.t. the norm\\[-5pt]
%%
%\begin{equation}
%| \! |\! | v_h | \! |\! |^2_{h} 
%:= \| \nabla_x {v_h }\|^2_{Q} 
%+ \delta_{h} \, \| \partial_t v_h  \|^2_{Q} + 
%\| v_h \|^2_{\Sigma_T} 
%+ \delta_{h} \, \| \nabla_x v_h  \|^2_{\Sigma_T}, 
%\label{eq:error-non-moving}
%\end{equation}
%%
%follows from \cite[Lemma 1]{LMRLangerMooreNeumueller2016a}.
%As it was noted in \cite{LMRLangerMooreNeumueller2016a}, coercivity implies that the 
%IgA approximation $u_h \in V_{0h}$ of \eqref{eq:discrete-scheme} is unique. Moreover, since 
%the IgA scheme \eqref{eq:discrete-scheme} is posed in the finite dimensional space $V_{0h}$, uniqueness 
%yields existence of the solution in $V_{0h}$. 
%%
%%----------------------------------------------------------------------------------------------------------------------------%
%Moreover, following \cite{LMRLangerMooreNeumueller2016a, LMRLangerMatculevichRepinArxiv2016},
%we can show boundedness of the bilinear form in \eqref{eq:discrete-scheme} w.r.t. the appropriately 
%chosen norms. 
Combining coercivity and boundedness properties of $a_{h}(\cdot, \cdot)$ with the 
consistency of the scheme and approximation results for IgA spaces, we obtain {the corresponding 
 a priori error estimate w.r.t. the norm}
\begin{equation*}
| \! |\! | v_h | \! |\! |^2_{h} 
:= \| \nabla_x {v_h }\|^2_{Q} 
+ \delta_{h} \, \| \partial_t v_h  \|^2_{Q} + 
\| v_h \|^2_{\Sigma_T} 
+ \delta_{h} \, \| \nabla_x v_h  \|^2_{\Sigma_T},
%\label{eq:error-non-moving}
\end{equation*}
which is presented in Theorem \ref{th:theorem-8} below.

\begin{theorem}
\label{th:theorem-8}{\cite{LMRLangerMooreNeumueller2016a, LMRLangerMatculevichRepinArxiv2016}}
Let $u \in V^{s}_{0} := V^{s}(Q) \cap V^{1, 0}_{0}$, $s \in \mathds{N}$, $s \geq 2$, 
be the exact solution to \eqref{eq:variational-formulation}, and let $u_h \in V_{0h}$ be the solution to 
\eqref{eq:discrete-scheme} with some fixed parameter $\theta >0$. Then, the following a priori 
{discretization} error estimate
%error estimate
%
\begin{equation*}
\| u - u_h\|_{h} \leq C \, h^{r-1}\, \| u \|_{H^r(Q)} 
%\label{eq:error-in-h}
\end{equation*}
holds with $r = \min \{ s, p+1 \}$ and {some} generic constant $C >0$ independent of $h$.
\end{theorem}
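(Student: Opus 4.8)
The plan is to follow the classical C\'ea/Strang strategy built on the three structural properties of $a_h(\cdot,\cdot)$---coercivity, consistency, and boundedness---combined with the interpolation estimates of Lemma \ref{lm:lemma-5}. First I would record coercivity in the form $a_h(v_h,v_h)\ge \tfrac12\,|\!|\!| v_h |\!|\!|^2_h$ for all $v_h\in V_{0h}$. This follows by integration by parts in time: since $v_h=0$ on $\Sigma_0$ one has $(\partial_t v_h,v_h)_Q=\tfrac12\|v_h\|^2_{\Sigma_T}$ and $\delta_h(\nabla_x v_h,\partial_t\nabla_x v_h)_Q=\tfrac{\delta_h}{2}\|\nabla_x v_h\|^2_{\Sigma_T}$, whence $a_h(v_h,v_h)=\tfrac12\|v_h\|^2_{\Sigma_T}+\|\nabla_x v_h\|^2_Q+\delta_h\|\partial_t v_h\|^2_Q+\tfrac{\delta_h}{2}\|\nabla_x v_h\|^2_{\Sigma_T}$, which dominates $\tfrac12|\!|\!| v_h |\!|\!|^2_h$. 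Next I would record consistency: because $u$ solves \eqref{eq:equation} and the test function $v_h+\delta_h\partial_t v_h$ vanishes on $\Sigma$, testing the residual $\partial_t u-\Delta_x u-f$ and integrating by parts in space yields $a_h(u,v_h)=l_h(v_h)$ for every $v_h\in V_{0h}$; subtracting \eqref{eq:discrete-scheme} gives the Galerkin orthogonality $a_h(u-u_h,v_h)=0$.

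The crux is boundedness in a norm strong enough to be controlled by interpolation. I would establish $a_h(w,v_h)\le C\,|\!|\!| w |\!|\!|_{+}\,|\!|\!| v_h |\!|\!|_h$ for $w\in V^2_{0}$ and $v_h\in V_{0h}$, where the auxiliary norm $|\!|\!| \cdot |\!|\!|_{+}$ collects the quantities $\|\partial_t w\|_Q$, $\|\nabla_x w\|_Q$, $\delta_h^{1/2}\|\partial_t w\|_Q$, $\delta_h^{1/2}\|\nabla_x w\|_{\Sigma_T}$, and $\delta_h\|\partial_t\nabla_x w\|_Q$. The only term not amenable to a direct Cauchy--Schwarz estimate is the mixed second-order term $\delta_h(\nabla_x w,\partial_t\nabla_x v_h)_Q$; here I would integrate by parts in time, using $\nabla_x v_h=0$ on $\Sigma_0$ to discard the lower boundary contribution, obtaining $\delta_h(\nabla_x w,\nabla_x v_h)_{\Sigma_T}-\delta_h(\partial_t\nabla_x w,\nabla_x v_h)_Q$. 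The surface term is bounded by $\delta_h^{1/2}\|\nabla_x w\|_{\Sigma_T}\cdot\delta_h^{1/2}\|\nabla_x v_h\|_{\Sigma_T}$ and the volume term by $\delta_h\|\partial_t\nabla_x w\|_Q\cdot\|\nabla_x v_h\|_Q$, both factors in $v_h$ landing inside $|\!|\!| v_h |\!|\!|_h$. The remaining terms of $a_h$ are dispatched by Cauchy--Schwarz together with the Friedrichs inequality $\|v_h\|_Q\le C_{{\rm F}}\|\nabla_x v_h\|_Q$.

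With these three ingredients the error bound follows in the standard way. I would split $u-u_h=(u-\Pi_h u)+z_h$ with $z_h:=\Pi_h u-u_h\in V_{0h}$, and combine coercivity, orthogonality and boundedness: $\tfrac12|\!|\!| z_h |\!|\!|^2_h\le a_h(z_h,z_h)=a_h(\Pi_h u-u,z_h)\le C\,|\!|\!| u-\Pi_h u |\!|\!|_{+}\,|\!|\!| z_h |\!|\!|_h$, so $|\!|\!| z_h |\!|\!|_h\le C|\!|\!| u-\Pi_h u |\!|\!|_{+}$, and the triangle inequality gives $|\!|\!| u-u_h |\!|\!|_h\le C|\!|\!| u-\Pi_h u |\!|\!|_{+}$. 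It then remains to bound each constituent of $|\!|\!| u-\Pi_h u |\!|\!|_{+}$ elementwise through Lemma \ref{lm:lemma-5} in the forms \eqref{eq:c-s-interpolation-estimate-v1}--\eqref{eq:c-s-interpolation-estimate-v3} (applying the scaled trace inequality \eqref{eq:trace-inequality} to the $\Sigma_T$ term), summing over $K\in\mathcal{K}_h$ and invoking the quasi-uniformity \eqref{eq:hk-and-h-relation} to replace $h_K$ by $h$. The main obstacle is precisely this last bookkeeping coupled to the boundedness step: one must verify that, after the time integration by parts, \emph{every} term---in particular $\delta_h\|\partial_t\nabla_x(u-\Pi_h u)\|_Q$, which carries second-order derivatives scaled by $\delta_h=\theta h$---contributes exactly at the order $h^{r-1}\|u\|_{H^r(Q)}$ with $r=\min\{s,p+1\}$, so that no single term degrades the asymptotic rate.
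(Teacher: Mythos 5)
Your proposal is correct and follows essentially the same route as the paper and the works it cites: coercivity of $a_h$ (where the mixed stabilization term integrates exactly to the $\Sigma_T$ boundary term, so no constraint on $\theta$ is needed), consistency yielding Galerkin orthogonality, boundedness on an augmented norm, and the NURBS approximation estimates of Lemma \ref{lm:lemma-5} with the scaled trace inequality \eqref{eq:trace-inequality}, assembled into a C\'ea-type quasi-optimality bound---exactly the outline the paper gives before Theorem \ref{th:theorem-8} and executes in detail for the locally stabilized scheme in Section \ref{sec:localized-scheme}. The only deviation is in the boundedness step: you integrate $\delta_h(\nabla_x w,\partial_t\nabla_x v_h)_Q$ by parts in time and handle $(\partial_t w,v_h)_Q$ via Friedrichs, so your auxiliary norm carries $\delta_h\|\partial_t\nabla_x w\|_Q$ and $\delta_h^{1/2}\|\nabla_x w\|_{\Sigma_T}$, whereas the paper's treatment (cf.\ Lemma \ref{lm:a-tilde-hk-boundedness}) integrates by parts in space and in time, carrying instead $\delta_K\|\Delta_x w\|^2_K$ and $\delta^{-1}_K\|w\|^2_K$; both variants are controlled by \eqref{eq:c-s-interpolation-estimate-v1}--\eqref{eq:c-s-interpolation-estimate-v3} at the same $O(h^{r-1})$ rate.
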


\section{Locally stabilized IgA schemes}
\label{sec:localized-scheme}

In the current section, we assume that $p \geq 2$ and $m \leq p - 1$, which yields that 
$V_{0h} \subset C^1(\overline{Q})$ providing the inclusion 
$V_{0h} \subset V^{\Delta_x, 1}_{0, \underline{0}} 
:= V^{\Delta_x, 1}_{0} \cap V^{0, 1}_{0, \underline{0}}$.
{We know that the solution $u$ of \eqref{eq:variational-formulation} belongs to 
$V^{\Delta_x, 1}_{0, \underline{0}}$ provided that $f \in L^2(Q)$. In this case, for all 
$K \in \mathcal{K}_h$, we can write the PDE $\partial_t u - \Delta_x u= f \; {\rm in} \; K$ 
and can multiply it with the localized test functions}
%To make the scheme localised, we first consider the problem in one
%element $K \in \mathcal{K}_h$ and write the respective
%integral relations using the test functions}
%
$$v_h + \delta_{K} \, \partial_t v_h, \quad 
\delta_{K} = \theta_{K} \, h_{K},  \quad 
 \theta_{K} > 0, \quad h_{K} := {\rm diam} (K),
$$
such that
$$\big(\partial_t u - \Delta_x u, v_h + \delta_{K} \, \partial_t v_h \big)_{K} 
= (f, v_h + \delta_{K} \, \partial_t v_h)_{K}, \quad \forall v_h \in V_{0h}.$$
By summing up all the elements in $\mathcal{K}_h$, we obtain the relation 
%%
%\begin{multline}
%\sum\limits_{K \in \mathcal{K}_h} \big(\partial_t u - \Delta_x u, v_h)_{K} + 
%\sum\limits_{K \in \mathcal{K}_h} \delta_{K} \,  \big(\partial_t u - \Delta_x u, \partial_t v_h \big)_{K} \\[-10pt]
%= \sum\limits_{K \in \mathcal{K}_h}(f, v_h)_{K} + \sum\limits_{{K} \in \mathcal{K}_h} \delta_{K} \, (f, \partial_t v_h)_{K}.
%\end{multline}
%%
%After several simplifications, we { obtain the formulation}\\[-20pt]
%
\begin{multline*}
(\partial_t u - \Delta_x u, v_h)_Q 
+ \!\!\! \sum\limits_{{K} \in \mathcal{K}_h} \! \! \!\delta_{K} \,  
   \big(\partial_t u - \Delta_x u, \partial_t v_h \big)_{K} %\\[-10pt]
= (f, v_h)_Q + \!\!\! \sum\limits_{{K} \in \mathcal{K}_h} \!\!\!  \delta_{K} \, (f, \partial_t v_h)_{K}.
%\label{eq:loc-and-nonloc-terms}
\end{multline*}
The integration by parts w.r.t. to the space variable yields
% \ell_{
\begin{alignat*}{2}
& \ell_{l\!o\!c\!,h}(v_h)
:= (f, v_h)_Q + \!\sum\limits_{{K} \in \mathcal{K}_h} \delta_{K} \, (f, \partial_t v_h)_{K}
=
(\partial_t u, v_h)_Q + (\nabla_x u, \nabla_x v_h)_Q \nonumber \\
%& \qquad \qquad \qquad \qquad \qquad 
&  
+ \!\sum\limits_{{K} \in \mathcal{K}_h}  \delta_{K}\,  
\Big( (\partial_t u, \partial_t v_h \big)_{K}
        + \big(\nabla_x u, \nabla_x \partial_t v_h \big)_{K} %\\[-3pt]
%& \qquad \qquad \qquad \qquad \qquad 
 - %\!\sum\limits_{K \in \mathcal{K}_h} \!\!\! \delta_{K} \!\!\!
   %\sum\limits_{E \in \mathcal{E}^K_h \subset \mathcal{E}_h} \!\!\!
   \big< \boldsymbol{n}_{x}^{\partial K} \cdot \nabla_x u, \partial_t v_h \big>_{\partial K} \Big) =: a_{l\!o\!c\!,h}(u, v_h),
%\label{eq:localised-a-u}
\end{alignat*}
%
%----------------------------------------------------------------------------------------------------------------------------%
where $\boldsymbol{n}_{x}^{\partial K}$ is an external normal vector to 
$\partial K$. Here, the last term is nothing else but a duality 
product $\big< \cdot, \cdot \big>_{\partial K} 
= \big< \cdot, \cdot \big>_{H^{-1/2}(\partial K) \times H^{1/2}(\partial K)}:
H^{-1/2}(\partial K) \times H^{1/2}(\partial K) \rightarrow \mathds{R}$, and 
$H^{-1/2}$ is dual space to $H^{1/2}$.
%\\[-7pt]
%%
%$$\big< \cdot, \cdot \big>_{E} = \big< \cdot, \cdot \big>_{H^{-1/2}(E) \times H^{1/2}(E)}:
%H^{-1/2}(E) \times H^{1/2}(E) \rightarrow \mathds{R}, $$
% %
%and $\big<\boldsymbol{n}_{x}^{E} \cdot \nabla_x u, \partial_t v_h \big>_{H^{-1/2}(E) \times H^{1/2}(E)} 
%= (\boldsymbol{n}_{x}^{E} \cdot \nabla_x u, \partial_t v_h)_E$. 
%The Hilbert-type space of a fractional order $H^{s}(E)$, $s \in \mathds{R}$, is defined for any $v \in L_1(E)$\\[-5pt]
%%
%$$
%H^{s}(E) := \{ v \in S^\prime\, : \, (1 + |\xi|^2)^{s/2} \, \hat{v} \in L_2(E)\}$$ 
%%
%and equipped with the norm
%
%$
%\| v \|^2_{H^{s}(E)} := \| (1 + |\xi|^2)^{s/2} \, \hat{v} \|^2_{L_2(E)}, \; \forall v \in H^{s}(E).
%$
%The space $H^{-s}$ is dual space to $H^{s}$, $s \in \mathds{R}$, i.e., 
%$H^{-s} := (H^{s})^\prime$.
%
Thus, {we arrive at the finite dimensional problem}:
%the final scheme reads as follows: 
find $u_h \in V_{0h}$ satisfying the identity
\begin{equation}
{a_{l\!o\!c\!,h}(u_h, v_h) = \ell_{l\!o\!c\!,h}(v_h), \quad \forall u_h, v_h \in V_{0h},}
\label{eq:a-hk-l}
\end{equation}
where the bilinear form $a_{l\!o\!c\!,h}(u_h, v_h)$ can be written as follows
\begin{alignat*}{2}
a_{l\!o\!c\!,h}(u_h, v_h) : = 
(\partial_t u_h, v_h)_Q & + (\nabla_x u_h, \nabla_x v_h)_Q \nonumber\\
& + \sum\limits_{K \in \mathcal{K}_h}  \delta_{K}\,
\Big( (\partial_t u_h, \partial_t v_h )_{K} 
+ (\nabla_x u_h, \nabla_x \partial_t v_h )_{K} \Big) \nonumber\\
& -  
  \sum\limits_{K \in \mathcal{K}_h}  \delta_{K}\, 
   \sum\limits_{E \in \mathcal{E}^K_h \subset \mathcal{E}_h^I}
   \big(\boldsymbol{n}_{x}^{E} \cdot\nabla_x u_h, \partial_t v_h \big)_{E}.
%\label{eq:a-hK}
\end{alignat*}
%
%and
%%
%\begin{equation}
%{
%l_{l\!o\!c\!,h}(v_h) := (f, v_h)_Q + \sum\limits_{K \in \mathcal{K}_h} \delta_{K} \, (f, \partial_t v_h)_{K}.
%}
%\label{eq:l-hk}
%\end{equation}
%
Due to the assumptions 
$v_h \big|_{\Sigma} = 0$ and $\boldsymbol{n}_{x}^{E} \big|_{\Sigma_0 \cup \Sigma_T} = {\bf 0}$, 
contributions of the terms \linebreak $\big(\boldsymbol{n}_{x}^{E} \cdot \nabla_x u_h , \partial_t v_h 
  \big)_{E  \in \mathcal{E}_{h}^{K} \subset \mathcal{E}^{\partial Q}_h }$ vanishes.

\subsection{Coercivity}
\label{ssec:coercivity-a-tilde-loc}

%----------------------------------------------------------------------------------------------------------------------------%
% Lemma on $V_{0h}$-coercivity of $a_{l\!o\!c\!,h}$
%----------------------------------------------------------------------------------------------------------------------------%
\begin{lemma}
\label{lm:a-tilde-hk-coercivity}
Let the parameters $\theta_{K}$ be sufficiently small, i.e., 
$\theta_{K} \in \Big(0, \tfrac{h_K}{d\, C^2_{i\!n\!t\!,\!1}} \Big]$, 
where $C_{i\!n\!t\!,\!1}$ is the interpolation constant in \eqref{eq:int-inequality-1} 
associated with $K \in \mathcal{K}_h$.
Then, the bilinear form $a_{l\!o\!c\!,h}(\cdot, \cdot): V_{0h} \times V_{0h} \rightarrow \mathds{R}$ is 
$V_{0h}$-coercive w.r.t. to the norm 
{\begin{equation}
|\!|\!| v_h |\!|\!|^2_{l\!o\!c\!,h} 
:= \| \nabla_x v_h\|^2_Q + \tfrac{1}{2} \| v_h \|^2_{\Sigma_T}
+ \sum\limits_{K \in \mathcal{K}_h} \delta_{K} \,  \| \partial_t v_h\|^2_{K},
\label{eq:norm-K-h}
\end{equation}
}
i.e., there exists a constant $\mu_{l\!o\!c, c} > 0$ such that 
\begin{equation}
a_{l\!o\!c\!,h}(v_h, v_h) \geq \mu_{l\!o\!c, c} \, |\!|\!| v_h |\!|\!|^2_{l\!o\!c\!,h}, 
\quad \forall v_h \in V_{0h}.
\label{eq:coercivity}
\end{equation}
\end{lemma}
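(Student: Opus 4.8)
The plan is to test the bilinear form with $v_h$ itself, show that the three principal terms reproduce the norm exactly, and argue that the two stabilisation/coupling terms recombine into a single element-wise Laplacian term which can be absorbed once $\theta_{K}$ is small. First I would set $u_h = v_h$ in $a_{l\!o\!c\!,h}(\cdot,\cdot)$ and deal with the three terms $(\partial_t v_h, v_h)_Q$, $(\nabla_x v_h, \nabla_x v_h)_Q$, and $\sum_{K}\delta_{K}\|\partial_t v_h\|^2_{K}$. Integrating the first by parts in time over the cylinder $Q = \Omega\times(0,T)$ and using $v_h = 0$ on $\Sigma_0$ gives $(\partial_t v_h, v_h)_Q = \tfrac12(\|v_h\|^2_{\Sigma_T} - \|v_h\|^2_{\Sigma_0}) = \tfrac12\|v_h\|^2_{\Sigma_T}$. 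Together with $(\nabla_x v_h,\nabla_x v_h)_Q = \|\nabla_x v_h\|^2_Q$ and the last term, these add up to exactly $|\!|\!| v_h |\!|\!|^2_{l\!o\!c\!,h}$.

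The key step, and the one I expect to be delicate, concerns the two remaining terms $\sum_{K}\delta_{K}(\nabla_x v_h, \nabla_x \partial_t v_h)_{K}$ and $-\sum_{K}\delta_{K}\sum_{E}(\boldsymbol{n}_{x}^{E}\cdot\nabla_x v_h,\partial_t v_h)_{E}$. I would reverse, element by element, the spatial integration by parts that produced the facet term: on each $K$ one has $(\nabla_x v_h, \nabla_x\partial_t v_h)_{K} = -(\Delta_x v_h, \partial_t v_h)_{K} + \big\langle \boldsymbol{n}_{x}^{\partial K}\cdot\nabla_x v_h, \partial_t v_h\big\rangle_{\partial K}$. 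Because $\partial_t v_h = 0$ on $\Sigma$ (since $v_h$ vanishes there for every $t$) and $\boldsymbol{n}_{x} = 0$ on $\Sigma_0\cup\Sigma_T$, the boundary-facet contributions on $\partial Q$ drop out, so the surviving boundary integral is precisely the internal-facet sum appearing in the coupling term. The two terms therefore cancel and leave $-\sum_{K}\delta_{K}(\Delta_x v_h,\partial_t v_h)_{K}$; this is where the inclusion $V_{0h}\subset V^{\Delta_x, 1}_{0,\underline{0}}$ is used, so that $\Delta_x v_h\in L_{2}(Q)$.

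It then remains to absorb $R := -\sum_{K}\delta_{K}(\Delta_x v_h,\partial_t v_h)_{K}$ into the norm. By Cauchy--Schwarz and the inverse inequality \eqref{eq:int-inequality-1} applied componentwise, $\|\Delta_x v_h\|_{K}\le \sqrt{d}\, C_{i\!n\!t\!,\!1}\,h_{K}^{-1}\|\nabla_x v_h\|_{K}$, so that, using $\delta_{K}h_{K}^{-1}=\theta_{K}$, one gets $|R|\le \sum_{K}\theta_{K}\sqrt{d}\,C_{i\!n\!t\!,\!1}\|\nabla_x v_h\|_{K}\|\partial_t v_h\|_{K}$. A weighted Young inequality with weight $\beta_{K} = h_{K}/(\sqrt{d}\,C_{i\!n\!t\!,\!1})$ splits each summand into $\tfrac12\,\theta_{K}\,d\,C^2_{i\!n\!t\!,\!1}\,h_{K}^{-1}\|\nabla_x v_h\|^2_{K} + \tfrac12\,\delta_{K}\|\partial_t v_h\|^2_{K}$, and the hypothesis $\theta_{K}\le h_{K}/(d\,C^2_{i\!n\!t\!,\!1})$ forces the first prefactor to be $\le \tfrac12$. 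Summing over $K$ yields $|R|\le \tfrac12\|\nabla_x v_h\|^2_Q+\tfrac12\sum_{K}\delta_{K}\|\partial_t v_h\|^2_{K}\le \tfrac12|\!|\!| v_h|\!|\!|^2_{l\!o\!c\!,h}$.

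Combining the two parts gives $a_{l\!o\!c\!,h}(v_h,v_h)\ge |\!|\!| v_h|\!|\!|^2_{l\!o\!c\!,h} - |R|\ge \tfrac12|\!|\!| v_h|\!|\!|^2_{l\!o\!c\!,h}$, which is \eqref{eq:coercivity} with $\mu_{l\!o\!c, c}=\tfrac12$. The main obstacle is recognising the exact facet cancellation in the second step, which reduces the localized stabilisation terms to the single element-wise form $-\sum_{K}\delta_{K}(\Delta_x v_h,\partial_t v_h)_{K}$; everything afterwards is a routine inverse-inequality-plus-Young absorption, calibrated precisely so that the stated smallness bound on $\theta_{K}$ (carrying the factor $d\,C^2_{i\!n\!t\!,\!1}$) is exactly what is needed.
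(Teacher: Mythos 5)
Your proposal is correct and follows essentially the same route as the paper's proof: integration by parts in time for $(\partial_t v_h, v_h)_Q$, element-wise reversal of the spatial integration by parts so that the stabilisation and facet terms collapse to $-\sum_{K}\delta_{K}(\Delta_x v_h,\partial_t v_h)_{K}$, and then absorption of this term via the inverse inequality \eqref{eq:int-inequality-1} and Young's inequality under the stated smallness of $\theta_K$, yielding $\mu_{l\!o\!c,c}=\tfrac12$. The only (immaterial) difference is that you apply Cauchy--Schwarz and the weighted Young inequality element by element, whereas the paper applies Cauchy--Schwarz across the sums and extracts the factor $\max_{K}\big(\tfrac{\theta_K}{h_K}C^2_{i\!n\!t\!,\!1}\big)$; both give the same bound.
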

\begin{proof}
Integration by parts of $a_{l\!o\!c\!,h}(v_h, v_h)$ yields
%Let us consider the bilinear form $a_{l\!o\!c\!,h}(v_h, v_h)$ and integrate by parts the last to terms in order 
%to obtain the following result
%$\delta_{K} \, \big (\nabla_x v_h, \nabla_x (\partial_t v_h))_{K}$ only w.r.t. spatial derivative. 
%This yields
%
\begin{alignat}{2}
a_{l\!o\!c\!,h}& (v_h, v_h) := (\partial_t v_h, v_h)_Q + (\nabla_x v_h, \nabla_x v_h)_Q  \nonumber\\
& 
+ \sum\limits_{K \in \mathcal{K}_h} \delta_{K} 
\Big\{ (\partial_t v_h, \partial_t v_h \big)_{K} 
         + \,\big (\nabla_x v_h, \nabla_x \partial_t v_h)_{K}  
         -  \!\!
  \sum\limits_{E \in \mathcal{E}_h^I}
   (\boldsymbol{n}_{x}^{E} \cdot\nabla_x u_h, \partial_t v_h)_{E} \Big\}  
\nonumber\\
& = \tfrac{1}{2} \, \| v_h \|^2_{\Sigma_T} +  \| \nabla_x v_h \|^2_Q 
+ \sum\limits_{K \in \mathcal{K}_h} \delta_{K} \Big\{
 	\| \partial_t v_h \|^2_{K} - (\Delta_x v_h, \partial_t v_h )_{K} \Big\}. %\nonumber\\[5pt]
\label{eq:fiist-term-coercivity}
\end{alignat}
{Here, and later on, we assume that $E \in \mathcal{E}_h^{K}$ and therefore omit repeating it.}
In order to prove coercivity, we need to estimate the last term in 
\eqref{eq:fiist-term-coercivity}. By using \eqref{eq:int-inequality-1} and Young inequality, we arrive at
\begin{alignat*}{2}
 \sum\limits_{K \in \mathcal{K}_h} \delta_K \, & (\Delta_x v_h, \partial_t v_h )_{K} 
  \leq  \Big(\sum\limits_{K \in \mathcal{K}_h} \delta_K \, \| \Delta_x v_h \|^2_K \Big)^{{1}\!/{2}} \, 
          \Big(\sum\limits_{K \in \mathcal{K}_h} \delta_K \| \partial_t  v_h \|^2_K \Big)^{{1}\!/{2}}  \nonumber\\
& \leq  \Big( \sum\limits_{K \in \mathcal{K}_h} \delta_K \, d \, \sum\limits_{l = 1}^d \| \partial^2_{x_l} v_h \|^2_K\Big)^{{1}\!/{2}} \, 
          \Big( \sum\limits_{K \in \mathcal{K}_h} \delta_K \| \partial_t  v_h \|^2_K \Big)^{{1}\!/{2}}  \nonumber\\
& \leq  \Big( \sum\limits_{K \in \mathcal{K}_h} \theta_K \, h_K \, d \, 
        \sum\limits_{l = 1}^d C^2_{i\!n\!t\!,\!1}\, h^{-2}_K \, \| \partial_{x_l} v_h \|^2_K \Big)^{{1}\!/{2}}
       \Big(\sum\limits_{K \in \mathcal{K}_h} \delta_K \| \partial_t  v_h \|^2_K \Big)^{{1}\!/{2}} \nonumber\\
%\end{alignat*}
%%
%Finally, by exploiting Young's inequality 
%%
%\begin{alignat*}{2}
% \sum\limits_{K \in \mathcal{K}_h} \delta_K \, (\Delta_x v_h, & \, \partial_t v_h )_{K} 
& \leq \Big( d \, \max\limits_{K \in \mathcal{K}_h} (\tfrac{\theta_K}{h_K} \, C^2_{i\!n\!t\!,\!1}) \, 
      \| \nabla_x v_h \|^2_Q \Big)^{{1}\!/{2}} 
      \Big(\sum\limits_{K \in \mathcal{K}_h} \delta_K \| \partial_t  v_h \|^2_K \Big)^{{1}\!/{2}} \nonumber\\
& \leq \tfrac{d}{2} \, 
          \max\limits_{K \in \mathcal{K}_h} \Big(\tfrac{\theta_K}{h_K} \, C^2_{i\!n\!t\!,\!1}\Big) \, 
      \Big(\| \nabla_x v_h \|^2_Q 
      + \sum\limits_{K \in \mathcal{K}_h} \delta_K \| \partial_t  v_h \|^2_K\Big).
\end{alignat*}
Therefore, $a_{l\!o\!c\!,h}(v_h, v_h)$ can be bounded from below as follows:
\begin{alignat*}{2}
a_{l\!o\!c\!,h}(v_h, v_h) & 
\geq \tfrac{1}{2} \, \| v_h \|^2_{\Sigma_T}
       + \Big(1 - \tfrac{d}{2} \max\limits_{K \in \mathcal{K}_h}\, \tfrac{\theta_K}{h_K} \, C^2_{i\!n\!t\!,\!1} \Big)\,
          \Big\{
          \sum_{K \in \mathcal{K}_h} \!\! \delta_{K} \, \| \partial_t v_h \|^2_{K} 
          +  \| \nabla_x v_h \|^2_Q \Big\} \\
& \geq \tfrac{1}{2} \, \| v_h \|^2_{\Sigma_T}
       + \Big(1 - \tfrac{d}{2} \max\limits_{K \in \mathcal{K}_h}\, \tfrac{\theta_K}{h_K} \, C^2_{i\!n\!t\!,\!1} \Big)\,
          \Big\{
          \sum_{K \in \mathcal{K}_h} \!\! \delta_{K} \, \| \partial_t v_h \|^2_{K} 
          +  \| \nabla_x v_h \|^2_Q \Big\} \\
& \geq \tfrac{1}{2} \, |\!|\!| v_h |\!|\!|^2_{l\!o\!c\!,h},        
%\label{eq:frist-term-coercivity}
\end{alignat*}
provided that $\theta_{K} \in \Big(0, \tfrac{h_K}{d\, C^2_{i\!n\!t\!,\!1}} \Big]$ for $K \in \mathcal{K}_h$.
\end{proof}

\begin{remark}
Computation of the constants $C_{i\!n\!t\!,\!1}$ in the inverse inequalities corresponds to the 
question of accurate estimation of maximal eigenvalues for generalised eigenvalue problems for 
considered differential equations.
In \cite{LMRKoutschanNeumuellerRaduArxiv2016}, the authors applied symbolic computation methods to
this problem defined on the square elements and were able to improve the previously known
%given for $p$- and $hp$-finite element methods
upper bounds in \cite{LMRSchwab1998}.
%by a factor 8. 
\end{remark}

$V_{0h}$-coercivity of $a_{l\!o\!c\!,h}$ implies existence and uniqueness of the discrete solution 
$u_h \in V_{0h}$. 
{From Lemma \ref{lm:a-tilde-hk-coercivity}, it also immediately follows that the system matrix 
of the linear system generated by the 
bilinear form is positive definite.}
%{\color{red}
%From Lemma \ref{lm:a-tilde-hk-coercivity}, it also follows that linear system generated by the 
%bilinear form is regular.
%%(the condition number is bounded by a constant independent of $h_K$, $K \in \mathcal{K}_h$).
%%{\bf THIS IS WRONG !}
%}

\subsection{Boundedness}
\label{ssec:boundedness}
%----------------------------------------------------------------------------------------------------------------------------%
% Lemma on uniform boundedness of $a_{l\!o\!c\!,h}$
%----------------------------------------------------------------------------------------------------------------------------%

To prove a priori error bounds, we need to show the uniform boundedness of the localised bilinear
form ${a}_{l\!o\!c\!,h}(\cdot, \cdot)$ on $V_{0h, *}  \times V_{0h}$, where 
{$V_{0h, *} := V^{1}_{0, \underline{0}} \cap V^{\Delta_x, 1}_{0, \underline{0}} + V_{0h}$ }
is equipped with the norm
\begin{equation*}
|\!|\!| v |\!|\!|^2_{l\!o\!c\!,h,*} 
:= |\!|\!| v |\!|\!|^2_{l\!o\!c\!,h} 
+ \sum\limits_{K \in \mathcal{K}_h } \big( \delta^{\, - 1}_{K} \| v \|^2_{K} + \delta_{K} \| \Delta_x v \|^2_{K} \big).
%\label{eq:norm-K-h-star} 
\end{equation*}

\begin{lemma} 
\label{lm:a-tilde-hk-boundedness}
Assume that $\theta_{K} \in \Big(0, \tfrac{h_K}{d\, C^2_{i\!n\!t\!,\!1}} \Big]$, $K \in \mathcal{K}_h$. 
Then, the bilinear form \linebreak 
${a}_{l\!o\!c\!,h}(\cdot, \cdot)$ is uniformly bounded on $V_{0h, *}  \times V_{0h}$,  
i.e., there exists a positive constant $\mu_{l\!o\!c, b}$ that does not depend on $h_K$ such that
\begin{equation}
|a_{l\!o\!c\!,h}(u, v_h)| \leq \mu_{l\!o\!c, b} \, \| u \|_{l\!o\!c\!,h,*}\, \| v_h\|_{l\!o\!c\!,h}, 
\quad \forall u \in V_{0h, *}, \quad \forall v_h \in V_{0h}.
\label{eq:boundedness}
\end{equation}
\end{lemma}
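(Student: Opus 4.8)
The plan is to bound each of the terms in the bilinear form separately, using Cauchy--Schwarz to introduce the norm pieces and then matching them against the appropriate components of $\|\cdot\|_{loc,h,*}$ and $\|\cdot\|_{loc,h}$. Writing out
\begin{alignat*}{2}
a_{loc,h}(u, v_h)
= (\partial_t u, v_h)_Q & + (\nabla_x u, \nabla_x v_h)_Q
+ \sum_{K \in \mathcal{K}_h} \delta_K \Big( (\partial_t u, \partial_t v_h)_K
+ (\nabla_x u, \nabla_x \partial_t v_h)_K \Big),
\end{alignat*}
I would first observe that, since $u \in V_{0h,*}$ need not be a spline, the most convenient form for estimation is the one before integration by parts in space (so the facet terms are replaced by the volume term $(\nabla_x u, \nabla_x \partial_t v_h)_K$); this avoids having to control the $H^{-1/2}$ duality pairings on facets for a general $u$.

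The first two volume terms are straightforward: $(\partial_t u, v_h)_Q$ is split element-wise and bounded by $\big(\sum_K \delta_K^{-1}\|v_h\|_K^2\big)^{1/2}\big(\sum_K \delta_K \|\partial_t u\|_K^2\big)^{1/2}$ — wait, more cleanly, I would pair $\|u\|_K$ against $\|v_h\|_K$ using the weights $\delta_K^{-1/2}$ and $\delta_K^{1/2}$, so that $\delta_K^{-1}\|u\|_K^2$ lands in $\|u\|_{loc,h,*}^2$ and $\delta_K\|v_h\|^2$... but $\delta_K\|v_h\|_K^2$ is not directly a term of $\|v_h\|_{loc,h}$. So the correct pairing is to put $\|v_h\|_K$ on the $\|v_h\|_{loc,h}$ side only where that norm supplies it; since $\|v_h\|_{loc,h}$ controls $\|\nabla_x v_h\|_Q$, $\|v_h\|_{\Sigma_T}$, and $\sum_K \delta_K\|\partial_t v_h\|_K^2$, I must route the test-function factor into one of these. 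Thus for $(\partial_t u, v_h)_Q$ I would instead integrate by parts in time or keep $\partial_t v_h$ as the active factor: the natural reading is $(\partial_t u, v_h)_Q$ paired as $\|\partial_t u\|$ with $\|v_h\|$, which forces me to use the extra $\delta_K^{-1}\|v_h\|^2$ mass. The clean route is: each term is of the form $(\text{something}(u), \partial_t v_h)_K$ or $(\text{something}(u), \nabla_x v_h\text{ or }v_h)$; I match $\partial_t v_h$-factors against $\sum_K\delta_K\|\partial_t v_h\|_K^2$ with weight $\delta_K^{1/2}$, match $\nabla_x v_h$-factors against $\|\nabla_x v_h\|_Q$, and match $\|v_h\|_{\Sigma_T}$ where a boundary pairing appears.

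The genuinely delicate term is the stabilization cross term $\sum_K \delta_K (\nabla_x u, \nabla_x \partial_t v_h)_K$, because $\nabla_x \partial_t v_h$ is a second-order quantity that is \emph{not} controlled by $\|v_h\|_{loc,h}$. Here I expect to invoke the inverse inequality \eqref{eq:int-inequality-1} applied to the spline $\partial_t v_h \in V_h$, giving $\|\nabla_x \partial_t v_h\|_K \leq C_{int,1} h_K^{-1}\|\partial_t v_h\|_K$; then $\delta_K\|\nabla_x u\|_K\|\nabla_x\partial_t v_h\|_K \leq C_{int,1}\theta_K \|\nabla_x u\|_K \cdot h_K^{-1/2}\cdot\delta_K^{1/2}h_K^{-1/2}\|\partial_t v_h\|_K$, and the hypothesis $\theta_K \leq h_K/(d\,C_{int,1}^2)$ keeps the resulting constant bounded independently of $h_K$. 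This is the step where the smoothness $V_{0h}\subset C^1$ and the spline inverse estimate are essential, and it is the main obstacle since it is the only place a bound on a derivative of $v_h$ exceeding the order controlled by $\|v_h\|_{loc,h}$ is required. Assembling the four estimates and applying discrete Cauchy--Schwarz over $K$ collects all $u$-factors into $\|u\|_{loc,h,*}$ and all $v_h$-factors into $\|v_h\|_{loc,h}$, yielding \eqref{eq:boundedness} with $\mu_{loc,b}$ depending only on $C_{int,1}$, $C_F$, and the degree — all $h$-independent.
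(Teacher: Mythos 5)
Your estimates for the first three terms (after integrating by parts in time to produce $(u,v_h)_{\Sigma_T}-(u,\partial_t v_h)_Q$, routed against $\|u\|_{\Sigma_T}$, $\sum_K\delta_K^{-1}\|u\|_K^2$ and the $v_h$-components of the discrete norm) agree with the paper's proof. The gap is in your treatment of the stabilization cross term, and it is not a matter of technique but of which object you are bounding. The bilinear form $a_{l\!o\!c\!,h}$ of \eqref{eq:a-hk-l} contains, in addition to $\sum_K \delta_K(\nabla_x u,\nabla_x\partial_t v_h)_K$, the facet terms $-\sum_K\delta_K\sum_{E\in\mathcal{E}_h^K\subset\mathcal{E}_h^I}(\boldsymbol{n}_x^E\cdot\nabla_x u,\partial_t v_h)_E$, and your "form before integration by parts in space" is described backwards: element-wise integration by parts combines the volume cross term \emph{and} the facet term into $-\delta_K(\Delta_x u,\partial_t v_h)_K$, so a facet-free form containing $(\nabla_x u,\nabla_x\partial_t v_h)_K$ is not a representation of $a_{l\!o\!c\!,h}$ at all. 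Moreover, the facet contributions you dropped do not cancel pairwise across an interior facet: the two neighbouring elements carry different weights, so for single-valued $\nabla_x u$ and continuous $\partial_t v_h$ a facet $E$ shared by $K$ and $K'$ contributes $-(\delta_K-\delta_{K'})(\boldsymbol{n}_x\cdot\nabla_x u,\partial_t v_h)_E$, which is nonzero precisely on the locally refined meshes with varying $\delta_K$ that this scheme is designed for. Your argument therefore bounds a different bilinear form (it coincides with $a_{l\!o\!c\!,h}$ only when all $\delta_K$ are equal, i.e.\ in the globally stabilized setting).

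The paper resolves this in the opposite direction: since every $u\in V_{0h,*}$ has $\Delta_x u\in L_2$ element-wise, one integrates by parts on each $K$, exactly as in \eqref{eq:fiist-term-coercivity}, rewriting cross plus facet terms as $-\sum_K\delta_K(\Delta_x u,\partial_t v_h)_K$, and then applies Cauchy--Schwarz against the component $\sum_K\delta_K\|\Delta_x u\|_K^2$ of the norm $|\!|\!|\cdot|\!|\!|_{l\!o\!c\!,h,*}$ --- that term sits in the $*$-norm precisely for this purpose, and the fact that your proof never uses it is itself a warning sign. Your inverse-inequality idea (apply \eqref{eq:int-inequality-1} to $\partial_t v_h$ and absorb the constant via $\theta_K\le h_K/(d\,C_{i\!n\!t\!,\!1}^2)$) is sound in itself --- the correct coefficient is $C_{i\!n\!t\!,\!1}(\theta_K/h_K)^{1/2}\le d^{-1/2}$, not the powers you wrote --- and it does bound the facet-free form with $\|\nabla_x u\|_Q$ in place of the Laplacian term; but to turn it into a proof of \eqref{eq:boundedness} you would still have to estimate the dropped facet terms, which for general $u\in V^{\Delta_x,1}_{0,\underline{0}}$ exist only as $H^{-1/2}$--$H^{1/2}$ duality pairings --- exactly the difficulty you set out to avoid, and the one the paper's integration-by-parts route eliminates.
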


%\label{eq:coercivity}
%\label{eq:boundedness}
\begin{proof}
We estimate $a_{l\!o\!c\!,h}(u, v_h)$ term by term. For the first one, 
we apply integration by parts w.r.t. time and the Cauchy inequality:
\begin{alignat*}{2}
(\partial_t u, v_h)_Q & = (u, v_h)_{\Sigma_T} - (u, \partial_t  v_h)_Q  \nonumber\\[2pt]
& \leq \| u \|_{\Sigma_T}\, \| v_h \|_{\Sigma_T} 
      + \sum\limits_{K \in \mathcal{K}_h} 
         \delta_{K}^{\, - {1}\!/{2}} \, \| u \|_{K} \, 
         \delta_{K}^{{1}\!/{2}} \| \partial_t  v_h\|_{K} \nonumber\\
 &  \leq \Big[\| u \|^2_{\Sigma_T} 
             + \sum\limits_{K \in \mathcal{K}_h} \delta_{K}^{\, - 1} \, \|  u  \|^2_{K} \Big]^{{1}\!/{2}} \, 
      \Big[\| v_h \|^2_{\Sigma_T} 
            + \sum\limits_{K \in \mathcal{K}_h} \delta_{K} \, \|  \partial_t  v_h  \|^2_{K} \Big]^{{1}\!/{2}}.
\end{alignat*}
The second term is estimated by means of the H\"older inequality, i.e.,\\ 
$$(\nabla_x u, \nabla_x v_h \big)_{Q} \leq \| \nabla_x u\| \, \| \nabla_x v_h \|,$$
whereas the third one is treated as follows:
\begin{alignat*}{2}
\sum\limits_{K \in \mathcal{K}_h} \delta_{K} \, (\partial_t u, \partial_t v_h \big)_{K} 
& \leq  \Big[ \sum\limits_{K \in \mathcal{K}_h} \delta_{K} \| \partial_t u \|^2_{K} \Big]^{{1}\!/{2}} \, 
           \Big[ \sum\limits_{K \in \mathcal{K}_h} \delta_{K} \| \partial_t v_h \|^2_{K} \Big]^{{1}\!/{2}}.
%\label{eq:third-term}
\end{alignat*}
%
%----------------------------------------------------------------------------------------------------------------------------%
If we consider result of \eqref{eq:fiist-term-coercivity}, the last term can be estimated as
\begin{alignat*}{2}
- \sum\limits_{K \in \mathcal{K}_h} \delta_{K} \, \big(\Delta_x u, \partial_t v_h \big)_{K}
 & \leq \Big[\sum\limits_{K \in \mathcal{K}_h} \delta_{K} \| \Delta_x u \|^2_{K} \Big]^{{1}\!/{2}} \, 
 	 \Big[\sum\limits_{K \in \mathcal{K}_h} \, \delta_{K} \, \| \partial_t v_h \|^2_{K} \Big]^{{1}\!/{2}}. \end{alignat*}
By combining the obtained results, the bilinear form can be bounded as\\
\begin{alignat*}{2}
|a_{l\!o\!c\!,h}(u, v_h)| 
& \leq \Big[ \| u \|^2_{\Sigma_T} + \| \nabla_x u \|^2_Q 
                  + \sum\limits_{K \in \mathcal{K}_h} 
                  \Big \{ \delta_{K}^{- 1} \, \|  u  \|^2_{K}  
                            + \delta_{K} \,  (\| \partial_t u \|^2_{K} + \| \Delta_x u \|^2_{K}) \Big \} \Big]^{{1}\!/{2}} \\
& \qquad \times
   \Big[ \| v_h \|^2_{\Sigma_T} + \| \nabla_x v_h \|^2_Q
            + 3 \, \sum\limits_{K \in \mathcal{K}_h} \delta_{K} \, \|  \partial_t  v_h  \|^2_{K} \Big]^{{1}\!/{2}} \\
& \leq \mu_{l\!o\!c, b} \, | \! |\! | u | \! |\! |_{l\!o\!c\!,h,*} \, | \! |\! | u_h | \! |\! |_{l\!o\!c\!,h}, 
\end{alignat*}
where $\mu_{l\!o\!c, b} = 3$.
\end{proof} 

\subsection{Approximation properties}
%\label{ssec:approximation-properties}

The estimate \eqref{eq:c-s-interpolation-estimate} implies
a priori estimates of the interpolation error $u - \Pi_h u$, measured in terms of the $L_2$-norm and
the discrete norms $| \! |\! | \cdot  | \! |\! |_{l\!o\!c\!,h}$ and $| \! |\! | \cdot  | \! |\! |_{l\!o\!c\!,h,*}$, which 
we later need in order to obtain an a priori estimate for $u - u_h$.

%----------------------------------------------------------------------------------------------------------------------------%
\begin{lemma}
\label{lm:lemma-6-loc}
%Assume that 
%$\theta_{K} \in \Big(0, \tfrac{h_K}{d\, C^2_{i\!n\!t\!,\!1}} \Big]$, $K \in \mathcal{K}_h$. 
Let $l, s \in \mathds{N}$ be $1 \leq l \leq s \leq p+1$, and $u \in V^{s}_{0, \underline{0}}$. Then, 
there exists a projection operator  $\Pi_h: V^{s}_{0, \underline{0}} \rightarrow V_{0h}$ (see 
Lemma \ref{lm:lemma-5}) and positive constants $C_1, C_1$, and $C_2$, such that the following a priori 
error estimates hold
 \begin{alignat}{2}
% \| u - \Pi_h u \|^2_{\partial Q} & \leq C_1 \, 
% \sum\limits_{K \in \mathcal{K}_h} h^{2\,(s - {1}\!/{2})}_K 
% \sum_{i = 0}^{s} c_K^{2 \, i} \, | u |^2_{H^{i}({\underline{K}})}, %\quad \forall v \in H^{\ell}(Q)
% %\| u \|^2_{H^{s}(K)}, 
% \label{eq:int-1-loc}\\[-6pt]
% \delta^{{1}\!/{2}}_h 
%% \| \nabla_x(u - \Pi_h u) \|_{\partial Q} & 
% \leq C_1 \,h^{s - 1} \| u \|_{H^{s}(Q)}, \label{eq:int-2}\\ 
 %| \! |\! | u - \Pi_h u  | \! |\! |^2_{h} & \leq C_2 \,h^{2(s - 1)} \| u \|^2_{H^{s}(Q)}, \label{eq:int-3}\\
| \! |\! | u - \Pi_h u  | \! |\! |^2_{l\!o\!c\!,h} & \leq C_1 \, 
\sum\limits_{K \in \mathcal{K}_h} h^{2(s - 1)}_K \, 
\sum_{i = 0}^{s} c_K^{2\,i} \, | u |^2_{H^{i}({\underline{K}})}, \, 
%\| u \|^2_{H^{s}(K)}, 
\label{eq:int-2-loc}\\
%
%| \! |\! | u - \Pi_h u  | \! |\! |^2_{s, h, *} & \leq C_2 \,h^{2(s - 1)} \| u \|^2_{H^{s}(Q)}. \label{eq:int-4}
| \! |\! | u - \Pi_h u  | \! |\! |^2_{l\!o\!c\!,h,*} & \leq C_2 \, 
\sum\limits_{K \in \mathcal{K}_h}  h^{2(s - 1)}_K 
 \sum_{i = 0}^{s} c_K^{2\,i} \, | u |^2_{H^{i}({\underline{K}})}, 
%\| u \|^2_{H^{s}(K)}. 
\label{eq:int-3-loc}
\end{alignat}
{for all $u \in L_2(Q) \cap H^{s}(\underline{K})$.}
\end{lemma}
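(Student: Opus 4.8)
The plan is to reuse the very same projection operator $\Pi_h$ from Lemma~\ref{lm:lemma-5} and to bound the two discrete norms of the interpolation error $e := u - \Pi_h u$ by decomposing them term by term over the elements $K \in \mathcal{K}_h$. The only analytic ingredients required are the three element-wise interpolation bounds \eqref{eq:c-s-interpolation-estimate-v1}--\eqref{eq:c-s-interpolation-estimate-v3}, the scaled trace inequality \eqref{eq:trace-inequality}, and the relation $\delta_K = \theta_K h_K \le h_K^2/(d\,C^2_{i\!n\!t\!,\!1})$ coming from $\theta_K \in (0, h_K/(d\,C^2_{i\!n\!t\!,\!1})]$. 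Throughout, the geometric factors $c_K$ together with $C_u$ from \eqref{eq:hk-and-h-relation} are controlled from above and below by the shape-regularity of the mapping $\Phi$, so the spurious powers $c_K^{-2}, c_K^{-4}$ produced by \eqref{eq:c-s-interpolation-estimate-v2}--\eqref{eq:c-s-interpolation-estimate-v3} are absorbed into the final constants $C_1, C_2$.

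For $|\!|\!| e |\!|\!|^2_{l\!o\!c\!,h}$ I would treat its three constituents separately. The gradient term $\|\nabla_x e\|^2_Q = \sum_K \|\nabla_x e\|^2_K$ and the stabilization term $\sum_K \delta_K \|\partial_t e\|^2_K$ are both dominated by element contributions of the full first-order seminorm, since $\|\nabla_x e\|^2_K, \|\partial_t e\|^2_K \le |e|^2_{H^1(K)}$; inserting \eqref{eq:c-s-interpolation-estimate-v2} yields precisely the power $h_K^{2(s-1)}$ for the gradient term, while the $\delta_K$-weighted term gains the extra factor $\delta_K \le C h_K^2$ and is therefore of even higher order. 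The terminal-time trace term $\tfrac12\|e\|^2_{\Sigma_T}$ is the only genuinely new piece: I would bound $\|e\|^2_{\Sigma_T} \le \sum_K \|e\|^2_{\partial K}$ (each facet on $\Sigma_T$ belongs to a single element), apply \eqref{eq:trace-inequality} to get $\|e\|^2_{\partial K} \le 2C^2_{t\!r\!} h_K^{-1}(\|e\|^2_K + h_K^2 |e|^2_{H^1(K)})$, and then substitute \eqref{eq:c-s-interpolation-estimate-v1} and \eqref{eq:c-s-interpolation-estimate-v2}. This produces the power $h_K^{2s-1} = h_K \cdot h_K^{2(s-1)}$, i.e. the claimed order up to the bounded factor $h_K$. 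Summing over $K$ establishes \eqref{eq:int-2-loc}.

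For the stronger norm $|\!|\!| e |\!|\!|^2_{l\!o\!c\!,h,*}$ it remains to control the two additional sums $\sum_K \delta_K^{-1}\|e\|^2_K$ and $\sum_K \delta_K \|\Delta_x e\|^2_K$. For the first I would use $\delta_K^{-1} \le C h_K^{-2}$ together with the $L_2$-estimate \eqref{eq:c-s-interpolation-estimate-v1}, so that $\delta_K^{-1}\|e\|^2_K \le C h_K^{-2} h_K^{2s} \sum_i c_K^{2i}|u|^2_{H^i(\underline K)} = C h_K^{2(s-1)} \sum_i c_K^{2i}|u|^2_{H^i(\underline K)}$. For the second I would first pass from the Laplacian to the $H^2$-seminorm via $\|\Delta_x e\|^2_K \le d\,|e|^2_{H^2(K)}$ and then apply \eqref{eq:c-s-interpolation-estimate-v3}; combined with $\delta_K \le C h_K^2$ this again gives exactly the order $h_K^{2(s-1)}$. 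Adding these to the bound already obtained for $|\!|\!| e |\!|\!|^2_{l\!o\!c\!,h}$ and summing over $\mathcal{K}_h$ yields \eqref{eq:int-3-loc}.

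The main obstacle is the term $\sum_K \delta_K^{-1}\|e\|^2_K$: it is the one place where $\delta_K$ must be controlled from below, since $\delta_K^{-1}\|e\|^2_K$ attains the order $h_K^{2(s-1)}$ only if $\delta_K \ge \underline{c}\, h_K^2$. The stated hypothesis furnishes merely the upper bound $\theta_K \le h_K/(d\,C^2_{i\!n\!t\!,\!1})$ needed for coercivity, so I would make the companion requirement $\theta_K \sim h_K$ (for instance $\theta_K = h_K/(d\,C^2_{i\!n\!t\!,\!1})$), equivalently $\delta_K \sim h_K^2$, explicit. The rest is pure bookkeeping: one tracks the geometric weights $c_K$ and the constants $C_u, C_{t\!r\!}, C_{0,s}, C_{1,s}, C_{2,s}$ through the element sums and absorbs the inverse powers of $c_K$ by shape-regularity, so that the resulting $C_1, C_2$ are independent of $h$.
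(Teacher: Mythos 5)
Your proposal is correct and follows essentially the same route as the paper's proof: the same element-by-element decomposition of both discrete norms, the same use of the interpolation estimates \eqref{eq:c-s-interpolation-estimate-v1}--\eqref{eq:c-s-interpolation-estimate-v3}, the scaled trace inequality \eqref{eq:trace-inequality} for the $\Sigma_T$-term, and the passage $\|\Delta_x e\|^2_K \leq d\,|e|^2_{H^2(K)}$ combined with $\delta_K \leq h_K^2/(d\,C^2_{i\!n\!t\!,\!1})$ for the Laplacian term. The only divergence is your explicit companion requirement $\theta_K \sim h_K$ for the term $\sum_K \delta_K^{-1}\|e\|^2_K$: the paper instead carries the factor $\max_{K} h_K/\theta_K$ inside its ``constant'' $C_2$ (cf.\ \eqref{eq:c-2}), so you have made explicit a lower bound on $\theta_K$ that the paper tacitly needs for $C_2$ to be independent of $h$ but never states.
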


\begin{proof} 
%
%----------------------------------------------------------------------------------------------------------------------------%
To prove \eqref{eq:int-2-loc} and \eqref{eq:int-3-loc}, we need to provide estimates for each term 
in the norm $| \! |\! | u - \Pi_h u  | \! |\! |_{l\!o\!c\!,h}$. In order to bound the first term, we use 
\eqref{eq:c-s-interpolation-estimate-v2}, i.e.,
\begin{alignat}{2}
\| \nabla_x (u  - \Pi_h u)\|^2_Q 
& \leq  \sum\limits_{K \in \mathcal{K}_h} | u -  \Pi_h u |^2_{H^1(K)} \nonumber\\
& \leq  C^2_{1,s} \max\limits_{K \in \mathcal{K}_h}  c_K^{-2} \sum\limits_{K \in \mathcal{K}_h}  \, h^{2(s - 1)}_{K} \, 
\sum_{i = 0}^s c_K^{2i} \, | u |^2_{H^s({\underline{K}})}.
\label{eq:estimate-Q-nablax}
\end{alignat}
For the next one, we use \eqref{eq:hk-and-h-relation}, \eqref{eq:c-s-interpolation-estimate-v2}, 
and by similar approach derive:
\begin{alignat}{2}
\sum\limits_{K \in \mathcal{K}_h} \delta_{K} \,\| \partial_t (u  - \Pi_h u) & \|^2_{K} \; 
\leq \max\limits_{K \in \mathcal{K}_h} \delta_K \sum\limits_{K \in \mathcal{K}_h} | u -  \Pi_h u |^2_{H^1(K)} \nonumber \\
& \leq C^2_{1,s} \, \max\limits_{K \in \mathcal{K}_h} \Big\{ \delta_K c_K^{-2} \Big\} \sum\limits_{K \in \mathcal{K}_h}  \, h^{2(s - 1)}_{K} \, 
\sum_{i = 0}^s c_K^{2i} \, | u |^2_{H^s({\underline{K}})}.  
\label{eq:estimate-delta-dt-K}
\end{alignat}
Let $\mathcal{K}^{\Sigma_T}_h := \{ K \in \mathcal{K}_h | \partial K \cap \Sigma_T \neq \mbox{\scriptsize \O}\}$. 
By applying \eqref{eq:trace-inequality}, \eqref{eq:hk-and-h-relation}, \eqref{eq:c-s-interpolation-estimate-v1}, and 
\eqref{eq:c-s-interpolation-estimate-v2}, the estimate of the part of the norm on $\Sigma_T$ reads as\\
\begin{alignat}{2}
\| u -  \Pi_h u \|^2_{\Sigma_T} & = \sum\limits_{E \in \mathcal{E}^{\Sigma_T}_h} \| u -  \Pi_h u \|^2_{E} \nonumber\\
& \leq \sum\limits_{K \in \mathcal{K}^{\Sigma_T}_h} C^2_{t\!r} \, 
\Big( h^{\,-1}_{K} \| u - \Pi_h u \|^2_K + h_{K} \, |u - \Pi_h u|^2_{H^1(K)} \Big) \nonumber\\
& \leq \max\limits_{K \in \mathcal{K}^{\Sigma_T}_h} \{ C^2_{t\!r} \} \, C_{u} \, 
\Big( h^{-1} \, \sum\limits_{K \in \mathcal{K}^{\Sigma_T}_h} \| u - \Pi_h u \|^2_K 
+ h \, \sum\limits_{K \in \mathcal{K}^{\Sigma_T}_h} |u - \Pi_h u|^2_{H^1(K)} \Big) \nonumber\\
%\end{alignat*}
%%
%\begin{alignat}{2}
%\| u - \Pi_h u \|^2_{\Sigma_T} %\nonumber\\
 & \leq \, \max\limits_{K \in \mathcal{K}^{\Sigma_T}_h} \{ C^2_{t\!r} \} \, C_{u} \, 
\Big( C^2_{0,s} 
\sum\limits_{K \in \mathcal{K}^{\Sigma_T}_h} h^{2s-1}_{K} \sum\limits_{i = 0}^{s} c^{2i}_K \, |u|_{H^i({\underline{K}})} 
\nonumber\\
& \qquad \qquad \qquad \qquad \qquad %\qquad \qquad 
+ C^2_{1,s} \sum\limits_{K \in \mathcal{K}^{\Sigma_T}_h} h^{2s-1}_{K} \sum\limits_{i = 0}^{s} c^{2(i-1)}_K \, |u|_{H^i({\underline{K}})} \Big) \nonumber\\
& \leq C_{u} \, \max\limits_{K \in\mathcal{K}^{\Sigma_T}_h} 
\{ C^2_{t\!r} (C^2_{0,s} + C^2_{1,s} c^{-2}_K) \}
\sum\limits_{K \in \mathcal{K}^{\Sigma_T}_h} h^{2s-1}_{K} \sum\limits_{i = 0}^{s} c^{2i}_K \, |u|_{H^i({\underline{K}})} \nonumber\\
& \leq C_{\Sigma_T} \,  \sum\limits_{K \in \mathcal{K}^{\Sigma_T}_h} h^{2s-1}_{K} \sum\limits_{i = 0}^{s} c^{2i}_K \, |u|_{H^i({\underline{K}})},
\label{eq:estimate-sigmat}
\end{alignat}
where 
\begin{equation}
C_{\Sigma_T} = C_{u} \, \max\limits_{K \in\mathcal{K}^{\Sigma_T}_h} 
\Big\{ C^2_{t\!r} (C^2_{0,s} + C^2_{1,s} c^{-2}_K) \Big\}.
\label{eq:c-sigmaT}
\end{equation} 
Combining \eqref{eq:estimate-Q-nablax}--\eqref{eq:estimate-sigmat}, we obtain the bound 
\begin{equation}
| \! |\! | u - \Pi_h u  | \! |\! |^2_{l\!o\!c\!,h} 
\leq  {C}_1 \, \sum\limits_{K \in \mathcal{K}_h} h^{2(s-1)}_{K} \sum\limits_{i = 0}^{s} c^{2i}_K \, |u|_{H^i({\underline{K}})},
\end{equation}
where 
%
%\begin{equation}
${C}_1 = \max\limits_{K \in \mathcal{K}_h} 
\Big \{ C^2_{1,s} \, (1 + \delta_K) c_K^{-2} + C_{\Sigma_T} \Big \}$ with constant $C_{\Sigma_T}$ defined in \eqref{eq:c-sigmaT}.
%\label{eq:c-1}
%\end{equation}

In order to prove \eqref{eq:int-3-loc}, we need to estimate 
$\sum\limits_{K \in \mathcal{K}_h } \delta^{-1}_{K} \| \cdot \|^2_{K}$ 
and $\delta_{K} \| \Delta_x v \|^2_{K}$ included into
$| \! |\! | \cdot  | \! |\! |_{l\!o\!c\!,h,*}$. First, using \eqref{eq:c-s-interpolation-estimate-v1}, 
we obtain
\begin{alignat*}{2}
\sum\limits_{K \in \mathcal{K}_h } \delta^{-1}_{K} \| u -  \Pi_h u \|^2_{K} 
%& \leq C_u \, h^{-1} \,\max\limits_{K \in \mathcal{K}_h} \theta_K^{-1} \sum\limits_{K \in \mathcal{K}_h } \| u -  \Pi_h u \|^2_{K} \\[-5pt]
& \leq C^2_{0,s} \, C_u \, \max\limits_{K \in \mathcal{K}_h} \tfrac{h_K}{\theta_K} \, 
\sum\limits_{K \in \mathcal{K}_h} h^{2(s-1)}_{K} \sum\limits_{i = 0}^{s} c^{2i}_K \, |u|_{H^i({\underline{K}})}.
\end{alignat*}
By accounting $\theta_K \leq \tfrac{h_K}{d\, C^2_{i\!n\!t\!,\!1}}$ and \eqref{eq:c-s-interpolation-estimate-v3},
the second term is bounded as follows 
\begin{alignat*}{2}
\sum\limits_{K \in \mathcal{K}_h } \, \delta_{K} \| \Delta_x (u - \Pi_h u) \|^2_{K} 
&  \leq \sum\limits_{K \in \mathcal{K}_h}  \tfrac{h_K}{d\, C^2_{i\!n\!t\!,\!1}} \, h_K \, d \, | u - \Pi_h u |^2_{H^2(K)} \\
& \leq 
C^2_{2,s} \, \sum\limits_{K \in \mathcal{K}_h}  \, C^{-2}_{i\!n\!t\!,\!1} \, h^2_K \, c_K^{-4} \, h^{2(s - 2)}_{K} \, 
\sum_{i = 0}^s c_K^{2i} \, \| u \|^2_{H^s({\underline{K}})} \\
& \leq C^2_{2,s} \, \max\limits_{K \in \mathcal{K}_h} \Big\{ C^{-2}_{i\!n\!t\!,\!1} \, c_K^{-4} \Big\}
\sum\limits_{K \in \mathcal{K}_h} h^{2(s - 1)}_{K} \, \sum_{i = 0}^s c_K^{2i} \, \| u \|^2_{H^s({\underline{K}})}.             
\end{alignat*}
%
%\!^\textrm{\scriptsize\color{gray(x11gray)}{
%\eqref{eq:int-inequality-1}
%}} 
%& \leq  \Big( \sum\limits_{K \in \mathcal{K}_h} \theta_K \, h_K \, d \, 
%        \sum\limits_{l = 1}^d C^2_{i\!n\!t\!,\!1}\, h^{-2}_K \, \| \partial_{x_l} v_h \|^2_K
%
Thus, we obtain
\begin{equation*}
| \! |\! | u - \Pi_h u  | \! |\! |^2_{l\!o\!c\!,h,*} 
\leq {C}_2 \, \max\limits_{K \in \mathcal{K}_h} \tfrac{h_K}{\theta_K} \, 
\sum\limits_{K \in \mathcal{K}_h} h^{2(s-1)}_{K} \sum\limits_{i = 0}^{s} c^{2i}_K \, |u|_{H^i({\underline{K}})},
\end{equation*}
where  
\begin{equation}
{C}_2
= \max\limits_{K \in \mathcal{K}_h} 
\Big\{C_{\Sigma_T} + C^2_{2,s} \, C^{-2}_{i\!n\!t\!,\!1} \, c_K^{-4} +  C^2_{0,s} \, C_{u}\, \tfrac{h_K}{\theta_K}\Big\},
\label{eq:c-2}
\end{equation}
where $C_{\Sigma_T} $ is defined in \eqref{eq:c-sigmaT}.
\end{proof}

%----------------------------------------------------------------------------------------------------------------------------%
% Lemma on consistency of $a_{l\!o\!c\!,h}$
%----------------------------------------------------------------------------------------------------------------------------%
\subsection{Consistency}
\label{ssec:consistency-a-tilde-loc}

\begin{lemma}
\label{lm:a-hk-consistency}
%Let $p \geq 2$, and parameter 
%$\theta_{K} \in \Big(0, \tfrac{h_K}{d\, C^2_{i\!n\!t\!,\!1}} \Big]$, %$K \in \mathcal{K}_h$, 
%$K \in \mathcal{K}_h$. 
If the solution $u \in V^{1, 0}_{0}$ of \eqref{eq:variational-formulation} also belongs to 
{ $V^{\Delta_x, 1}_{0, \underline{0}}$, }
then it satisfies the consistency identity 
\begin{equation}
a_{l\!o\!c\!,h}(u, v_h) = \ell_{l\!o\!c\!,h}(v_h),  \quad v_h \in V_{0h}.
\label{eq:a-hk-consistency}
\end{equation} 
\end{lemma}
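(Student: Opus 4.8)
The plan is to make rigorous the formal derivation that \emph{defines} $a_{l\!o\!c\!,h}$ and $\ell_{l\!o\!c\!,h}$, showing that every integration by parts used there is legitimate under the assumed regularity $u \in V^{\Delta_x, 1}_{0, \underline{0}}$. First I would upgrade the variational identity to the strong form. Starting from $a(u,w) = \ell(w)$ for all $w \in V^1_{0, \overline{0}}$ (with $u_0 = 0$), I integrate $(\nabla_x u, \nabla_x w)_Q$ by parts in space and $-(u, \partial_t w)_Q$ by parts in time. All boundary contributions drop out: on $\Sigma$ because $w = 0$, on $\Sigma_0 \cup \Sigma_T$ because the spatial normal vanishes, while the temporal boundary terms cancel since $w = 0$ on $\Sigma_T$ and $u = 0$ on $\Sigma_0$. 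This yields $(\partial_t u - \Delta_x u - f, w)_Q = 0$ for all $w \in V^1_{0, \overline{0}}$, and since this space contains $C^\infty_c(Q)$ it is dense in $L_2(Q)$, so $\partial_t u - \Delta_x u = f$ holds a.e. in $Q$, hence in $L_2(K)$ for every $K \in \mathcal{K}_h$.

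Next I would test this pointwise equation on each element with the admissible function $v_h + \delta_K \partial_t v_h$ (well defined since $V_{0h} \subset C^1(\overline{Q})$), then sum over $\mathcal{K}_h$ to recover $(\partial_t u - \Delta_x u, v_h)_Q + \sum_{K} \delta_K (\partial_t u - \Delta_x u, \partial_t v_h)_K = \ell_{l\!o\!c\!,h}(v_h)$. The two Laplacian contributions are then integrated by parts in space separately: for $-(\Delta_x u, v_h)_Q$ a \emph{global} Green formula gives $(\nabla_x u, \nabla_x v_h)_Q$ with no boundary term (again $v_h = 0$ on $\Sigma$ and the spatial normal vanishes on $\Sigma_0 \cup \Sigma_T$); for each weighted $-\delta_K(\Delta_x u, \partial_t v_h)_K$ the \emph{element-wise} generalized Green formula gives $\delta_K\big[(\nabla_x u, \nabla_x \partial_t v_h)_K - \langle \boldsymbol{n}_x^{\partial K}\cdot \nabla_x u, \partial_t v_h\rangle_{\partial K}\big]$.

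The last bookkeeping step is to collapse the facet sum. Splitting it into $\mathcal{E}^{\partial Q}_h$ and $\mathcal{E}^I_h$, the boundary facets drop out: on $\Sigma$ one has $v_h = 0$ and hence $\partial_t v_h = 0$, while on $\Sigma_0 \cup \Sigma_T$ the spatial facet normal $\boldsymbol{n}_x^E$ vanishes. What remains is exactly the interior facet term of $a_{l\!o\!c\!,h}$, so collecting all contributions gives $a_{l\!o\!c\!,h}(u, v_h) = \ell_{l\!o\!c\!,h}(v_h)$.

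The main obstacle is the rigorous element-wise integration by parts: since only $\Delta_x u \in L_2(K)$ (not classical second derivatives) is available, the boundary term must be read as the duality pairing $\langle \boldsymbol{n}_x^{\partial K}\cdot \nabla_x u, \partial_t v_h\rangle_{H^{-1/2}(\partial K)\times H^{1/2}(\partial K)}$, justified because $\nabla_x u$ belongs to $H^{{\rm div}_x, 0}$ on each $K$ so its normal trace exists in $H^{-1/2}(\partial K)$. A subtle accompanying point, and the reason $a_{l\!o\!c\!,h}$ keeps the facet sum at all, is that across a shared interior facet the two contributions combine to $(\delta_K - \delta_{K'})\langle \boldsymbol{n}_x^{E}\cdot \nabla_x u, \partial_t v_h\rangle_{E}$, which---unlike in the globally weighted scheme where $\delta_K$ is constant---does not cancel.
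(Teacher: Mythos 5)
Your proposal is correct and follows essentially the same route as the paper: recover the strong form $\partial_t u - \Delta_x u = f$ in $L_2(Q)$ from the weak formulation via integration by parts and density, then re-run the element-wise derivation that defines $a_{l\!o\!c\!,h}$ and $\ell_{l\!o\!c\!,h}$ with $u$ in place of $u_h$, interpreting the facet terms as $H^{-1/2}(\partial K)\times H^{1/2}(\partial K)$ duality pairings. The paper's proof is just a terse two-sentence version of exactly this argument; your added details (the normal-trace justification for the element-wise Green formula, and the observation that interior facet contributions combine with weight $\delta_K - \delta_{K'}$ and hence do not cancel in the locally weighted scheme) are faithful elaborations of what the paper leaves implicit.
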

%----------------------------------------------------------------------------------------------------------------------------%
\begin{proof}
Since { $u \in V^{\Delta_x, 1}_{0, \underline{0}}$}, by integration by parts in 
\eqref{eq:variational-formulation} w.r.t. to $x$ and $t$ as well as density arguments, 
we obtain $u_t - \Delta_x u = f$ in $L_2(Q)$ and $u \big|_{\Sigma} = 0$.
%provides the following identity
%
%\begin{alignat}{2}
%(f, v)_Q
%& = (\nabla_x {u}, \nabla_x {w})_Q - (u, \partial_t w)_Q \nonumber\\
%& = (u_t - \Delta_x u, v)_Q - (n_t \, u, v)_{\partial Q} + \big<\boldsymbol{n}_x \cdot \nabla_x u, v \big>_{\partial Q} \nonumber\\
%& = (u_t - \Delta_x u, v)_Q - (u, v)_{\Sigma_T}  + (u, v)_{\Sigma_0} 
%{ + \big<\boldsymbol{n}_x \cdot \nabla_x u, v \big>_{\Sigma \cup \Sigma_T} + \big<\boldsymbol{n}_x \cdot \nabla_x u, v \big>_{\Sigma_0}} \nonumber\\
%& =  (u_t - \Delta_x u, v)_Q, \forall v \in V^{1}_{0, \overline{0}}.
%\label{eq:galerkin-property}
%\end{alignat}
%
%
%By using a test function $v \in C^{\infty}_0 (Q) \subset V^1_{0,\overline{0}}$, 
%the identity $(f, v)_Q = (u_t - \Delta_x u, v)_Q$ follows. 
%Since $C^{\infty}_0 (Q)$ is dense in $L_2(Q)$, we obtain $u_t - \Delta_x u = f$ in $L_2(Q)$.
%Then, \eqref{eq:galerkin-property} implicates $(u_0, v)_{\Sigma_0}  =  (u, v)_{\Sigma_0}$ for
%$v \in V^1_{0}$, which means $u = u_0$ on $\Sigma_0$. 
%The Dirichlet trace of $u \in  V^{1, 0}_{0}$ is $u \big|_{\Sigma} = 0$.
%Thus, we arrive at a strong formulation of \eqref{eq:equation}. }
%
The consistency identity $a_{l\!o\!c\!,h}(u, v_h) = \ell_{l\!o\!c\!,h}(v_h)$,  $v_h \in V_{0h}$ 
is derived along with the discrete space-time formulation \eqref{eq:a-hk-l}.
%, i.e., 
%%
%\begin{alignat}{2}
%\ell_{l\!o\!c\!,h}(v_h) 
%= \, & (f, v_h)_Q + \!\sum\limits_{{K} \in \mathcal{K}_h} \delta_{K} \, (f, \partial_t v_h)_{K} \nonumber \\
%= \, & (\partial_t u, v_h)_Q + (\nabla_x u, \nabla_x v_h)_Q \nonumber \\
%& \qquad + \!\sum\limits_{{K} \in \mathcal{K}_h}  \delta_{K}\,  
%\Big( (\partial_t u, \partial_t v_h \big)_{K}
%        + \big(\nabla_x u, \nabla_x \partial_t v_h \big)_{K}
% - \big< \boldsymbol{n}_{x}^{E} \cdot \nabla_x u, \partial_t v_h \big>_{\partial K} \Big) \nonumber \\[-2pt]
%= \, & a_{l\!o\!c\!,h}(u, v_h), \quad \forall v_h.
%\label{eq:galerkin-property}
%\end{alignat}
\end{proof}
%----------------------------------------------------------------------------------------------------------------------------%
%
%----------------------------------------------------------------------------------------------------------------------------%
% Lemma on apriori error estimate for $a_{l\!o\!c\!,h}$
%----------------------------------------------------------------------------------------------------------------------------%
\subsection{A priori estimates of approximation errors}
\label{ssec:apriori-a-tilde-loc}
{
\begin{lemma}
\label{lm:a-hk-apriori}
Let $u \in V^{\Delta_x, 1}_{0, \underline{0}}$ be an exact solution of 
\eqref{eq:variational-formulation}, and $u_h \in V_{0h}$ (with $p \geq 2$) 
be an approximate solution generated by \eqref{eq:a-hk-l}. Then, the best approximation estimate
\begin{equation}
|\!|\!| u  - u_h |\!|\!|_{l\!o\!c\!,h} 
\leq (1 + \tfrac{\mu_{l\!o\!c, b}}{\mu_{l\!o\!c, c}}) \, 
\inf_{v_h \in V_{0h}} \, \| u  - v_h\|_{l\!o\!c\!,h,*}
\label{eq:clothest-approximation}
\end{equation}
holds. Here, ${\mu_{l\!o\!c, c}}$ and ${\mu_{l\!o\!c, b}}$ are positive constants from Lemmas 
\ref{lm:a-tilde-hk-coercivity} and \ref{lm:a-tilde-hk-boundedness}, respectively, that do not depend on 
$h_K$.
\end{lemma}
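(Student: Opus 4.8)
The plan is to derive this best-approximation (Céa-type) estimate by exploiting the three structural properties already established for the localized bilinear form: coercivity (Lemma \ref{lm:a-tilde-hk-coercivity}), boundedness (Lemma \ref{lm:a-tilde-hk-boundedness}), and consistency (Lemma \ref{lm:a-hk-consistency}). The standard route is to introduce an arbitrary $v_h \in V_{0h}$, split the error $u - u_h$ through $v_h$, and estimate the discrete part $u_h - v_h$ using coercivity, while controlling the continuous remainder $u - v_h$ by boundedness.

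\medskip

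First I would write $w_h := u_h - v_h \in V_{0h}$ for an arbitrary $v_h \in V_{0h}$, and apply coercivity \eqref{eq:coercivity} to obtain
\begin{equation*}
\mu_{l\!o\!c, c} \, |\!|\!| w_h |\!|\!|^2_{l\!o\!c\!,h} \leq a_{l\!o\!c\!,h}(w_h, w_h) = a_{l\!o\!c\!,h}(u_h - v_h, w_h).
\end{equation*}
Next I would split this using linearity in the first argument as $a_{l\!o\!c\!,h}(u_h, w_h) - a_{l\!o\!c\!,h}(v_h, w_h)$. The key observation is that $w_h \in V_{0h}$, so by the discrete equation \eqref{eq:a-hk-l} we have $a_{l\!o\!c\!,h}(u_h, w_h) = \ell_{l\!o\!c\!,h}(w_h)$, and by the consistency identity \eqref{eq:a-hk-consistency} we also have $a_{l\!o\!c\!,h}(u, w_h) = \ell_{l\!o\!c\!,h}(w_h)$. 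Subtracting these two gives the discrete Galerkin orthogonality $a_{l\!o\!c\!,h}(u - u_h, w_h) = 0$, which lets me replace $a_{l\!o\!c\!,h}(u_h, w_h)$ by $a_{l\!o\!c\!,h}(u, w_h)$. Hence
\begin{equation*}
\mu_{l\!o\!c, c} \, |\!|\!| w_h |\!|\!|^2_{l\!o\!c\!,h} \leq a_{l\!o\!c\!,h}(u - v_h, w_h).
\end{equation*}

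\medskip

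Now the right-hand side is a pairing of an element of $V_{0h, *}$ (namely $u - v_h$, since $u \in V^{\Delta_x,1}_{0,\underline{0}}$ and $v_h \in V_{0h}$, so their difference lies in $V_{0h,*}$) against an element $w_h \in V_{0h}$, precisely the setting of the boundedness estimate \eqref{eq:boundedness}. Applying it yields $a_{l\!o\!c\!,h}(u - v_h, w_h) \leq \mu_{l\!o\!c, b} \, |\!|\!| u - v_h |\!|\!|_{l\!o\!c\!,h,*} \, |\!|\!| w_h |\!|\!|_{l\!o\!c\!,h}$. Dividing through by $\mu_{l\!o\!c, c} \, |\!|\!| w_h |\!|\!|_{l\!o\!c\!,h}$ (discarding the trivial case $w_h = 0$) gives
\begin{equation*}
|\!|\!| u_h - v_h |\!|\!|_{l\!o\!c\!,h} \leq \tfrac{\mu_{l\!o\!c, b}}{\mu_{l\!o\!c, c}} \, |\!|\!| u - v_h |\!|\!|_{l\!o\!c\!,h,*}.
\end{equation*}
The final step is a triangle inequality: $|\!|\!| u - u_h |\!|\!|_{l\!o\!c\!,h} \leq |\!|\!| u - v_h |\!|\!|_{l\!o\!c\!,h} + |\!|\!| v_h - u_h |\!|\!|_{l\!o\!c\!,h}$, where I bound the first summand by $|\!|\!| u - v_h |\!|\!|_{l\!o\!c\!,h,*}$ (since the starred norm dominates the unstarred one by definition) and the second by the inequality just derived. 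Collecting the constants gives the factor $(1 + \mu_{l\!o\!c, b}/\mu_{l\!o\!c, c})$, and taking the infimum over all $v_h \in V_{0h}$ completes the proof.

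\medskip

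The one point that warrants care — and the main conceptual obstacle — is the verification that $u - v_h$ genuinely lies in the space $V_{0h, *}$ on which boundedness was proved, so that \eqref{eq:boundedness} is applicable; this requires the hypothesis $u \in V^{\Delta_x, 1}_{0, \underline{0}}$ together with the definition $V_{0h, *} := V^{1}_{0, \underline{0}} \cap V^{\Delta_x, 1}_{0, \underline{0}} + V_{0h}$. The other subtlety is that the two arguments of the boundedness estimate use different norms (the starred norm on the first slot, the plain norm on the second), which is exactly what produces the asymmetric Céa constant; this is why the infimum on the right of \eqref{eq:clothest-approximation} is taken in the stronger $|\!|\!| \cdot |\!|\!|_{l\!o\!c\!,h,*}$ norm. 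Everything else is the classical Céa/Strang argument and is routine once orthogonality and the two norm estimates are in place.
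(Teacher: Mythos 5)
Your proof is correct and follows essentially the same C\'ea-type argument as the paper: coercivity applied to the discrete difference, Galerkin orthogonality obtained by combining the discrete equation \eqref{eq:a-hk-l} with the consistency identity \eqref{eq:a-hk-consistency}, boundedness with the asymmetric pair of norms, and a concluding triangle inequality using that the starred norm dominates the plain one. The only minor difference is that you keep $v_h \in V_{0h}$ arbitrary and take the infimum at the end, whereas the paper runs the identical argument with the specific choice $v_h = \Pi_h u$ from Lemma \ref{lm:lemma-6-loc}; your variant is in fact the cleaner way to arrive at the stated infimum bound.
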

}
\begin{proof}
The Galerkin orthogonality 
\begin{equation}
a_{l\!o\!c\!,h}(u - u_h, v_h) = 0.
\label{eq:a-hk-galerkin-orthogonality}
\end{equation}
follows from \eqref{eq:a-hk-consistency}.
Applying the triangle inequality, we estimate the discretisation error $u - u_h$ as follows:
\begin{equation}
|\!|\!| u - u_h |\!|\!|_{l\!o\!c\!,h} 
\leq |\!|\!| u - \Pi_h u  |\!|\!|_{l\!o\!c\!,h} + |\!|\!| \Pi_h u  - u_h  |\!|\!|_{l\!o\!c\!,h}.
\label{eq:l-13-triangle-ineq}
\end{equation}
The first term on the RHS of \eqref{eq:l-13-triangle-ineq} can easily be estimated by means of Lemma 
\ref{lm:lemma-6-loc}. 
For the estimation of $|\!|\!| \Pi_h u  - u_h  |\!|\!|_{l\!o\!c\!,h}$, we first use $V_{0h}$-ellipticity of 
$a_{l\!o\!c\!,h}(\cdot, \cdot)$ w.r.t. the norm $|\!|\!| \cdot |\!|\!|_{l\!o\!c\!,h}$ (see Lemma 
\ref{lm:a-tilde-hk-coercivity}), i.e.,
$$ \mu_{l\!o\!c, c} \, |\!|\!| \Pi_h u  - u_h  |\!|\!|_{l\!o\!c\!,h} \leq |a_{l\!o\!c\!,h}(\Pi_h u  - u_h, \Pi_h u  - u_h)|.$$
Next, by means of the Galerkin orthogonality \eqref{eq:a-hk-galerkin-orthogonality}, we obtain
\begin{alignat*}{2}
\mu_{l\!o\!c, c} \,  |\!|\!| \Pi_h u  - u_h |\!|\!|^2_{l\!o\!c\!,h} 
& \leq a_{l\!o\!c\!,h}(\Pi_h u  - u_h, \Pi_h u  - u_h)  \\[0pt]
& = a_{l\!o\!c\!,h}(\Pi_h u  - u, \Pi_h u  - u_h).
\end{alignat*}
Finally, { we apply Lemma \ref{lm:a-tilde-hk-coercivity} and obtain the estimate}
$$\mu_{l\!o\!c, c} \, |\!|\!| \Pi_h u  - u_h |\!|\!|^2_{l\!o\!c\!,h} \leq \mu_{l\!o\!c, b}  \, \| \Pi_h u  - u \|_{l\!o\!c\!,h,*}\, \| \Pi_h u  - u \|_{l\!o\!c\!,h},$$
which automatically yields
\begin{equation}
|\!|\!| \Pi_h u  - u_h |\!|\!|_{l\!o\!c\!,h} \leq \tfrac{\mu_{l\!o\!c, b}}{\mu_{l\!o\!c, c}} \, \| \Pi_h u  - u \|_{l\!o\!c\!,h,*}.
\label{eq:eq-1}
\end{equation}
Combining $\| \Pi_h u  - u \|_{l\!o\!c\!,h} \leq \| \Pi_h u  - u \|_{l\!o\!c\!,h,*}$, 
\eqref{eq:eq-1}, and \eqref{eq:l-13-triangle-ineq}, we arrive at
\begin{equation*}
|\!|\!| u  - u_h |\!|\!|_{l\!o\!c\!,h} 
\leq (1 + \tfrac{\mu_{l\!o\!c, b}}{\mu_{l\!o\!c, c}}) \, 
\| u  - \Pi_h u \|_{l\!o\!c\!,h,*}.
\end{equation*}
\end{proof}

\begin{theorem}
\label{th:a-hk-apriori}
Let $p \geq 2$, $u \in V^{s}_{0}$, $s \geq 2$, be an exact solution of 
\eqref{eq:variational-formulation}, and $u_h \in V_{0h}$ be an approximate solution of  
\eqref{eq:a-hk-l} with $\theta_{K} \in \Big(0, \tfrac{h_K}{d\, C^2_{i\!n\!t\!,\!1}} \Big]$, 
$K \in \mathcal{K}_h$. Then, the discretisation error estimate
\begin{equation*}
|\!|\!| u - u_h |\!|\!|^2_{l\!o\!c\!,h} \leq C 
\sum\limits_{K \in \mathcal{K}_h}  \, h^{2(s - 1)}_{K} \, 
\sum_{i = 0}^s c_K^{2i} \, | u |^2_{H^i(K)}
%\sum_{K \in \mathcal{K}_h} h^{2\,(s - 1)}_K \| u \|^2_{H^{r}(K)}
%C \, h^{r-1}\, \| u\|_{H^r(Q)} 
%\label{eq:error-in-h-localised}
\end{equation*}
%
%\label{eq:coercivity}
%\label{eq:boundedness}
%
hold, where $C = (1 + \tfrac{\mu_{l\!o\!c, b}}{\mu_{l\!o\!c, c}}) \, C_2$ is a constant independent of $h$,
$r = \min \{ s, p+1 \}$, and $p$ denotes the polynomial degree of the THB-splines, 
$\mu_{l\!o\!c, b}$ and $\mu_{l\!o\!c, c}$ are constant in boundedness \eqref{eq:coercivity} and 
coercivity \eqref{eq:boundedness} inequalities, respectively.
\end{theorem}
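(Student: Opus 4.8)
The plan is to obtain the estimate directly by chaining the quasi-optimality bound of Lemma~\ref{lm:a-hk-apriori} with the interpolation estimate of Lemma~\ref{lm:lemma-6-loc}; essentially all the analytic work has already been carried out in the coercivity, boundedness, consistency and approximation lemmas, so what remains is bookkeeping. First I would verify that the hypotheses are met: since $u \in V^s_0$ with $s \ge 2$ and the data are homogeneous (as assumed in Section~\ref{sec:variational-fomulation}), we have $u \in V^{\Delta_x,1}_{0,\underline{0}}$, and the restriction $\theta_K \in (0,\, h_K/(d\,C^2_{i\!n\!t\!,\!1})]$ is exactly the one under which Lemmas~\ref{lm:a-tilde-hk-coercivity}, \ref{lm:a-tilde-hk-boundedness} and \ref{lm:a-hk-apriori} hold, with constants $\mu_{l\!o\!c, c}$ and $\mu_{l\!o\!c, b}$ independent of $h_K$.

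Then I would apply the best-approximation estimate \eqref{eq:clothest-approximation} and bound the infimum by the concrete projection $\Pi_h u \in V_{0h}$ from Lemma~\ref{lm:lemma-5}, namely
\begin{equation*}
|\!|\!| u - u_h |\!|\!|_{l\!o\!c\!,h}
\le \Big(1 + \tfrac{\mu_{l\!o\!c, b}}{\mu_{l\!o\!c, c}}\Big)\,
\inf_{v_h \in V_{0h}} |\!|\!| u - v_h |\!|\!|_{l\!o\!c\!,h,*}
\le \Big(1 + \tfrac{\mu_{l\!o\!c, b}}{\mu_{l\!o\!c, c}}\Big)\, |\!|\!| u - \Pi_h u |\!|\!|_{l\!o\!c\!,h,*}.
\end{equation*}
Squaring and inserting the interpolation bound \eqref{eq:int-3-loc} then yields
\begin{equation*}
|\!|\!| u - u_h |\!|\!|^2_{l\!o\!c\!,h}
\le \Big(1 + \tfrac{\mu_{l\!o\!c, b}}{\mu_{l\!o\!c, c}}\Big)^2 C_2
\sum_{K \in \mathcal{K}_h} h^{2(s-1)}_K \sum_{i=0}^s c_K^{2i}\, |u|^2_{H^i(\underline{K})},
\end{equation*}
with $C_2$ as in \eqref{eq:c-2}, so that the constant $C$ of the statement is the product of the quasi-optimality factor and $C_2$.

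The only two points needing comment are the appearance of the support extensions $\underline{K}$ rather than the elements $K$ on the right-hand side, and the $h$-independence of $C$. For the first I would invoke the finite-overlap property of tensor-product B-splines: every support extension $\underline{K}$ meets only a number of neighbours bounded in terms of $p$ and $d$ but not of $h$, whence $\sum_K |u|^2_{H^i(\underline{K})} \le C_{\mathrm{ov}} \sum_K |u|^2_{H^i(K)}$, and $C_{\mathrm{ov}}$ is absorbed into $C$, producing the form stated in the theorem. For the second I would note that, with $\theta_K$ taken proportional to its admissible upper bound $h_K/(d\,C^2_{i\!n\!t\!,\!1})$, the ratio $h_K/\theta_K$ entering $C_2$ in \eqref{eq:c-2} stays bounded under refinement, while the shape constants $C_{l,s}$, the trace constant $C_{t\!r}$, the quasi-uniformity constant $C_u$ and the geometry factors $c_K$ all remain bounded; hence $C$ does not depend on $h$.

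Finally I would record the rate. Because Lemma~\ref{lm:lemma-6-loc} requires $s \le p+1$, the usable regularity is capped at $p+1$; applying the estimate with $r = \min\{s, p+1\}$ turns the leading factor $h^{2(s-1)}_K$ into $h^{2(r-1)}_K$, recovering the $\mathcal{O}(h^{r-1})$ order already reported for the global scheme in Theorem~\ref{th:theorem-8}. I do not anticipate a genuine obstacle, since the proof is a combination of results established above; the only mild subtlety is tracking the dependence of $C_2$ on $h_K/\theta_K$ so as to guarantee that the final constant is mesh-independent.
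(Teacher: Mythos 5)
Your proposal follows essentially the same route as the paper's own (one-line) proof, which is exactly the chaining of the quasi-optimality bound of Lemma~\ref{lm:a-hk-apriori} with the interpolation estimate \eqref{eq:int-3-loc} of Lemma~\ref{lm:lemma-6-loc}. You are in fact more careful than the paper, which leaves the support extensions $\underline{K}$ (rather than $K$) on the right-hand side without invoking the finite-overlap argument, does not address the $h_K/\theta_K$-dependence hidden in $C_2$ from \eqref{eq:c-2}, and states the constant as $(1 + \tfrac{\mu_{l\!o\!c, b}}{\mu_{l\!o\!c, c}})\, C_2$ where the squared chaining actually yields $(1 + \tfrac{\mu_{l\!o\!c, b}}{\mu_{l\!o\!c, c}})^2\, C_2$.
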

%----------------------------------------------------------------------------------------------------------------------------%

\begin{proof}

Application of estimate \eqref{eq:int-3-loc} yields
%The latter can be applied in \eqref{eq:l-13-triangle-ineq} to provide:
%
\begin{alignat*}{2}
|\!|\!| u - u_h |\!|\!|_{l\!o\!c\!,h} 
%\!^\textrm{ \scriptsize\color{gray(x11gray)}{ \eqref{eq:int-3-loc} } } 
%& \leq {C}_1 \, \sum\limits_{K \in \mathcal{K}_h}  \, h^{2(s - 1)}_{K} \, 
%\sum_{i = 0}^s c^{2i}_K \, |u|_{H^i({\underline{K}})}
%+ \tfrac{\mu_b}{\mu_c} \, \| \Pi_h u  - u \|_{l\!o\!c\!,h,*} \nonumber\\[-5pt]
%\!^\textrm{ \scriptsize\color{gray(x11gray)}{ \eqref{eq:int-3-loc} } } 
& \leq  
{(1 + \tfrac{\mu_{l\!o\!c, b}}{\mu_{l\!o\!c, c}}) \, C_2 }\, 
\sum\limits_{K \in \mathcal{K}_h}  \, h^{2(s - 1)}_{K} \, \sum_{i = 0}^s c^{2i}_K \, |u|_{H^i({\underline{K}})},
%\label{eq:l-13-triangle-ineq-2}
\end{alignat*}
where $C_2$ is defined in \eqref{eq:c-2}.
\end{proof}
%----------------------------------------------------------------------------------------------------------------------------%

\section{A posteriori error estimates and numerical experiments}

%----------------------------------------------------------------------------------------------------------------------------%
In this section, we discuss the implementation of the numerical scheme discussed above and the estimates 
used to control the quality of  approximations. 
Numerical experiments present the error order of convergence  (e.o.c.) in terms of the error norm 
\eqref{eq:norm-K-h}. Also, we discuss computational properties of the majorants
$\overline{\rm M}^{\rm I}$ and $\overline{\rm M}^{\rm I\!I}$ that follow from \cite{LMRRepin2002}
and of the error identity
%Presented numerical experiments demonstrate 
%the order of convergence of the error \eqref{eq:norm-K-h}, computational properties of the majorants 
%$\overline{\rm M}^{\rm I}$ and $\overline{\rm M}^{\rm I\!I}$ following from \cite{LMRRepin2002} and 
%the error identity 
${{\rm E \!\!\! Id}}$ \cite{LMRAnjamPauly2016}. Moreover, we compare  time expenditures that are
required  for getting approximations of the solution  with the time spent for computing efficient error 
bounds.

Let $u_h$ denote an approximation of $u$.  We assume that  \linebreak
$u_h \in V_{0h} := V_h \cap V^{\Delta_x, 1}_{0, \underline{0}}$ (cf. \eqref{eq:vh-v0h}),
%
%where $V_{h}$ is defined in \eqref{eq:vh-v0h}, 
and define
$$u_h(x, t) = u_h(x_1, . . . , x_{d+1}) 
:= \sum_{i \in \mathcal{I}} \underline{ \rm u}_{h,i} \, \phi_{h, i} (x_1, . . . , x_{d+1}) ,
$$
where $\underline{\rm u}_h 
:= \big[ \underline{ \rm u}_{h,i}\big]_{i \in \mathcal{I}} \in {\mathds{R}}^{|\mathcal{I}|}$
contains free parameters to be defined (it is the vector of degrees of freedom (d.o.f.) or, 
in the IgA community, vector of control points). This vector is generated by the linear system
%
%----------------------------------------------------------------------------------------------------------------------------%
\begin{equation}
{\rm K}_h \, \underline{ \rm  u}_h = {\rm f}_h, \quad 
{\rm K}_h := \big[a_{l\!o\!c\!,h}(\phi_{h,i},\phi_{h,j}) \big]_{i, j \in \mathcal{I}},
\quad
{\rm f}_h := \big[l_{l\!o\!c\!,h}(\phi_{h,i}) \big]_{i \in \mathcal{I}}.
\label{eq:system-uh}
\end{equation}
{ The system \eqref{eq:system-uh} is solved} by means of the sparse direct ${\rm LU}$ 
factorisations. This choice of the solution method
is { motivated by our intention to provide a fair comparison of time expenditures 
used for solving the system generating $u_h$ and $y_h$ (for the majorant $\overline{\rm M}^{\rm I}$) 
as well as $w_h$ (for $\overline{\rm M}^{\rm I\!I}$).}
%supported by intention to provide a fair comparison of time spent on solving systems generating $u_h$, 
%$\boldsymbol{y}_h$, and $w_h$.
%Due to the one-patch domain setting and the restriction 
%on the knots-multiplicity of $\hat{\mathcal{S}}^{p}_h$, { the condition} 
%$u_h \in C^{p-1}$ is automatically provided. 
Due to properties of IgA approximations the condition
$u_h \in C^{p-1}$ is automatically provided
%

%{\bf What is one-patch domain setting??? I could not find. The above sentence is mysterious.
%Why not to write SIMPLY: "Due to properties of IgA approximations the condition
%$u_h \in C^{p-1}$ is automatically provided".
%}

{Approximation properties} of $u_h$ are analysed by studying convergence of the error 
$e = u - u_h$ measured in terms of different norms. The first 
norm is defined in \eqref{eq:norm-K-h} and the second one is
\begin{equation*}
{|\!|\!|  e |\!|\!|^2} := \| \nabla_x e \|^2_Q + \| e \|^2_{\Sigma_T}.
%\label{eq:error-2002}
\end{equation*}
%
%{\bf Comment:  How to understand " norm follows from \cite{LMRRepin2002}"???.
%This is one of standard norms used in numerous papers!}
%
The norm ${|\!|\!|  e |\!|\!|^2}$ is controlled by the majorant (see, e.g.,  \cite{LMRRepin2002}) 
\begin{alignat*}{2}
\overline{\rm M}^{\rm I}(u_h,\boldsymbol{y}_h) 
:= \, & (1 + \beta) \, \|\boldsymbol{y}_h - \nabla_x u_h \|^2_Q
    + (1 + \tfrac{1}{\beta}) \, C_{{\rm F}}^2 \, 
    \| {\rm div}_x\boldsymbol{y}_h + f  - \partial_t u_h\|^2_Q\\
= \, & (1 + \beta) \,  \overline{\mathrm m}^{{\rm I}, 2}_{\mathrm{d}} 
   + (1 + \tfrac{1}{\beta}) \, C_{{\rm F}}^2 \,  \overline{\mathrm m}^{{\rm I}, 2}_{\mathrm{eq}}, 
%\label{eq:majorant-elliptic} 
\end{alignat*}
where $\beta > 0$ and ${\boldsymbol{y}}_h \in Y_h \subset H^{{\rm div}_x, 0}(Q)$. The space 
$$Y_h \equiv \mathcal{S}^{q}_h 
:= \big\{ {{\boldsymbol \psi}}_{h,i} := \oplus^{d+1} \hat{\mathcal{S}}^{q}_h \circ \Phi^{-1} \big\}$$
is generated by the push-forward of $\oplus^{d+1} \hat{\mathcal{S}}^{q}_h$, where 
$\hat{\mathcal{S}}^{q}_h$ is the space of splines of the degree $q$ used to approximate components
of $\boldsymbol{y}_h = \big(y_h^{(1)}, \ldots ,y_h^{(d+1)}\big)^{\rm T}$. 
%----------------------------------------------------------------------------------------------------------------------------%
The sharpest estimate is obtained by the minimisation of $\overline{\rm M}^{\rm I}(u_h,\boldsymbol{y}_h)$ 
w.r.t. %\linebreak
$$\boldsymbol{y}_h(x, t) =\boldsymbol{y}_h(x_1, . . . , x_{d+1}) 
= \sum_{i \in \mathcal{I} \times (d+1)} 
\underline{ \rm \bf y}_{h,i} \, {{\boldsymbol \psi}}_{h,i}(x_1, . . . , x_{d+1}).$$
Here, $\underline{ \rm \bf y}_{h} 
:= \big[ \underline{ \rm \bf y}_{h,i}\big]_{i \in \mathcal{I}} \in {\mathds{R}}^{(d+1)|\mathcal{I}|}$ 
{(i.e., it is a vector of the dimension $(d+1)|\mathcal{I}|$)}  is defined by the linear system
\begin{equation}
\left( {C_{{\rm F}}^2} \, {\rm Div}_h + {\beta} \, {\rm M}_h \right)\, \underline{ \rm \bf y}_{h} 
= - {C_{{\rm F}}^2} \, {\rm z}_h + {\beta} \, {\rm g}_h, 
\label{eq:system-fluxh}
\end{equation}
%
%----------------------------------------------------------------------------------------------------------------------------%
where 
\begin{equation*}%}
\begin{array}{r@{$\;$}l l} 
	{ {\rm Div}_h} & 
	:= 
	\big[ ({\rm div}_x {{\boldsymbol \psi}}_{h,i}, {\rm div}_x {{\boldsymbol \psi}}_{h,j})_Q
	\big]_{i, j=1}^{(d+1)|\mathcal{I}|} 
%	\in {\mathds R}^{(d+1)|\mathcal{I}| \times 2\,|\mathcal{I}|}
, \quad 
	{\rm z}_{h} := 
	\big[\big(f - v_t,  {\rm div}_x {{\boldsymbol \psi}}_{h,j} \big)_Q  
	\big]_{j=1}^{(d+1)|\mathcal{I}|}  %\in {\mathds{R}}^{(d+1)|\mathcal{I}|}
	,  
	%\nonumber
	\\[5pt]
	{{\rm M}_h} & := 
	\big[ ({{\boldsymbol \psi}}_{h,i}, {{\boldsymbol \psi}}_{h,j})_Q
	\big]_{i, j=1}^{(d+1)|\mathcal{I}|}
	%\in {\mathds R}^{(d+1)|\mathcal{I}| \times 2\,|\mathcal{I}|}
	, 
	\qquad \qquad \;\;
	{\rm g}_h := 
	\big[ \big(\nabla_x v, {{\boldsymbol \psi}}_{h,j}\big)_Q 
	\big]_{j=1}^{(d+1)|\mathcal{I}|}. %\in {\mathds{R}}^{(d+1)|\mathcal{I}|}
\end{array}
%\label{eq:system-matrix-components-definitions}
\end{equation*}
The optimal value for ${\beta}$ reads as ${\beta} 
:= {C_{{\rm F}}} \,  \overline{\mathrm m}^{{\rm I}}_{\mathrm{eq}} \, / \, \overline{\mathrm m}^{{\rm I}}_{\mathrm{d}}$.
%----------------------------------------------------------------------------------------------------------------------------%
According to numerical results obtained in 
\cite{LMRKleissTomar2015, LMRMatculevich2017, LMRLangerMatculevichRepinArxiv2017}, 
the most efficient majorant reconstruction is obtained with spline degree $q \gg p$. 
{ At the same time,
the approximation $u_h$ is reconstructed on the mesh $\mathcal{K}_h$, whereas a coarser mesh 
$\mathcal{K}_{Mh}$,  $M \in \mathds{N}^+$, is used to recover $\boldsymbol{y}_h$. This helps 
to minimise the number of d.o.f. for the latter one. 
The initial mesh $\mathcal{K}^0_{h}$ and corresponding basis functions are assumed to be given 
via the geometry representation of the computational domain. Throughout the set of numerical examples, 
$\mathcal{K}^0_{h}$ are generated by $N_{\rm ref, 0}$ initial uniform refinements
before actual testing.} In our implementation, 
\eqref{eq:system-fluxh} is solved by the sparse direct ${\rm LDL^{\rm T}}$ Cholesky factorisations.

%----------------------------------------------------------------------------------------------------------------------------%
In addition to $\overline{\rm M}^{\rm I}$, \cite{LMRRepin2002} provides an advanced form of the 
majorant $\overline{\rm M}^{\rm I\!I}(u_h,\boldsymbol{y}_h, w_h)$, i.e.,
%
%\begin{alignat*}{2}
%\| \nabla_x e \|^2_Q 
%& \leq \overline{\rm M}^{\rm I\!I} (u_h, w_h) = \|  w_h - u_h \|^2_{\Sigma_T} + 2 \, \mathcal{F}(u_h, w_h) 
%+ (1 + {\beta} ) \, \big\| {{\mathbf{r}}}^{\rm I\!I} _{\rm d}\big\|^2_Q
%+ C_{{\rm F}}^2 \, (1 + \tfrac{1}{{\beta}} ) \, \big\| {{\mathbf{r}}}^{\rm I\!I}_{\rm eq}\big\|^2_Q, 
%\label{eq:majorant-II-w}
%\end{alignat*}
%
\begin{alignat*}{2}
\| \nabla_x e \|^2_Q 
& \leq \overline{\rm M}^{\rm I\!I} (u_h, w_h) \\
& := \|  w_h - u_h \|^2_{\Sigma_T} + 2 \, \mathcal{F}(u_h, w_h) 
+ (1 + {\beta} ) \, \big\| {{\mathbf{r}}}^{\rm I\!I} _{\rm d}\big\|^2_Q
+ C_{{\rm F}}^2 \, (1 + \tfrac{1}{{\beta}} ) \, \big\| {{\mathbf{r}}}^{\rm I\!I}_{\rm eq}\big\|^2_Q, 
%\label{eq:majorant-II-w}
\end{alignat*}
where
\begin{alignat*}{2}
\mathcal{F}(u_h, w_h) 
& := (\nabla_x u_h, \nabla_x (w_h - u_h)) + (\partial_t u_h - f, w_h - u_h ), \\
{{\mathbf{r}}}^{\rm I\!I} _{\rm d}(u_h,\boldsymbol{y}_h, w_h) 
& :=\boldsymbol{y}_h + \nabla_x w_h - 2 \, \nabla_x u_h, \quad \mbox{and} \quad \\
{{\mathbf{r}}}^{\rm I\!I}_{\rm eq}(\boldsymbol{y}_h, w_h) 
& := {\rm div}_x\boldsymbol{y}_h + f - \partial_t w_h.
\end{alignat*}
%
%----------------------------------------------------------------------------------------------------------------------------%
Here, $w_h$ is the solution to \eqref{eq:a-hk-l} on the approximation space
$${ %
{W}_{0h} := W_h \cap V^{\Delta_x, 1}_{0}, \quad 
\mbox{with} \quad 
W_h \equiv {\mathcal{S}}^{r}_h 
:=\Big\{ \chi_{h,i} := \hat{\mathcal{S}}^{r}_h \circ {\chi}^{-1} \Big\},}$$ 
where ${\hat{\mathcal{S}}}^{r}_h$ is the space of degree $r$. 
The function $w_h$ can be represented by
$$w_h(x, t) = w_h(x_1, . . . , x_{d+1}) 
:= \sum_{i \in \mathcal{I}} \underline{ \rm w}_{h,i} \, \chi_{h,i}.
$$
%\quad \mbox{such that} \quad 
Here, 
$\underline{\rm w}_h 
:= \big[ \underline{ \rm w}_{h,i}\big]_{i \in \mathcal{I}} \in {\mathds{R}}^{|\mathcal{I}|}$
is the vector of control points of $w_h$ defined by the linear system
%
%\begin{equation}
${\rm K}^{(r)}_h \, \underline{ \rm  w}_h = {\rm f}^{(r)}_h,$
%\label{eq:system-wh}
%\end{equation}
%
where 
${\rm K}^{(r)}_h := \big[a_{l\!o\!c\!,h}(\chi_{h, i}, \chi_{h, j}) \big]_{i, j \in \mathcal{I}}$, \linebreak
${\rm f}^{(r)}_h :=\big[l_{l\!o\!c\!,h}(\chi_{h, i}) \big]_{i \in \mathcal{I}}$.
% 
%and $r$ indicates the degree of splines used to construct the basis $\chi_{h,i}$.
%----------------------------------------------------------------------------------------------------------------------------%
Since $\partial_t w_h$ is approximated by a richer space, the term
$\big\| {{\mathbf{r}}}^{\rm I\!I}_{\rm eq}(\boldsymbol{y}_h, w_h) \big\|^2_Q$ is expected to be 
smaller than $\| {{\mathbf{r}}}_{\rm eq}(\boldsymbol{y}_h, u_h) \|^2_Q$. Therefore, the value of 
the error bound $\overline{\rm M}^{\rm I\!I}$ must be improved. The optimal parameter ${\beta}$ is 
calculated by \linebreak
${\beta} 
:= {C_{{\rm F}}} \| {{\mathbf{r}}}^{\rm I\!I}_{\rm eq}\|_Q / 
\| {{\mathbf{r}}}^{\rm I\!I}_{\rm d}\|_Q$.

The last error norm is generated by the solution operator $\mathcal{L} := \partial_t - \Delta_x$, i.e.,
\begin{equation*}
|\!|\!|  e |\!|\!|^2_{\mathcal{L}} 
:= \| \Delta_x e \|^2_Q + \| \partial_t e \|^2_Q +  \| \nabla_x e \|^2_{\Sigma_T}.
%\label{eq:error-L}
\end{equation*}
It is controlled by the error identity \cite{LMRAnjamPauly2016}
\begin{equation*}
{{\rm E \!\!\! Id}}^2 (u_h) 
:= \| \nabla_x (u_0 - u_h) \|^2_{\Sigma_0} + \|  \Delta_x u_h + f - \partial_t u_h\|^2_Q.
%\label{eq:error-identity}
\end{equation*}
Marking of the elements in $\mathcal{K}_h$ is driven by the bulk marking criterion (also known as 
D\"orfler's marking \cite{LMRDoerfler1996}) denoted by ${\mathds{M}}_{\rm BULK}({\sigma})$, 
$\sigma \in [0, 1]$. Finally, the effectiveness of the error estimators is evaluated by 
efficiency indices, i.e.,
\begin{alignat*}{2}
I_{\rm eff} (\overline{\rm M}^{\rm I}) 
:= \tfrac{\overline{\rm M}^{\rm I}}{|\!|\!| e |\!|\!|_Q}, \quad
I_{\rm eff} (\overline{\rm M}^{\rm I\!I}) 
:= \tfrac{\overline{\rm M}^{\rm I\!I}}{\| \nabla_x e \|_Q}, 
\quad 
I_{\rm eff} ({{{\rm E \!\!\! Id}}}) := \tfrac{{{{\rm E \!\!\! Id}}}}{|\!|\!|  e |\!|\!|_{\mathcal{L}}} = 1.
\end{alignat*}

{ 
Below we study the behaviour of the above discussed error control tools within a series of benchmark 
examples. We begin with a rather simple example, which is intended to demonstrate important properties
of the numerical scheme. More complicated problems with non-trivial geometries and singular solutions 
are considered at the end of the section. The implementation was carried out using the open-source C++ 
library G+Smo \cite{LMRgismoweb2015}.}
%
%Next, let us consider a series of benchmark examples, in order to study the numerical behaviour of the 
%error control tools discussed above. 
%simple example to familiarize the reader with 
%numerical aspects being compared as well as important properties of the proposed discretisation scheme.
%By the end of the section, the problems with non-trivial geometries and singular solutions are considered.

\subsection{Example 1: polynomial solution}
%\begin{example}
\label{ex:unit-domain-example-2}
{
First, we consider a simple example, where the solution of \eqref{eq:equation} is a polynomial function}
$$u(x,t) = (1 - x) \, x^2 \,  (1 - t) \, t, \quad (x, t) \in \overline{Q} := [0, 1]^2, $$ 
and { generated by it RHS}
$$f(x,t) = - (1 - x)\, x^2 \,(1 - 2\,t) - (2 - 6\,x) \, (1 - t)\,t, \quad (x, t) \in Q := (0, 1)^2.$$ 
$u(x, t)$ satisfies homogeneous Dirichlet boundary and initial conditions on 
$\Sigma := \partial \Omega \times (0, 1)$ and ${\overline{\Sigma}}_0$, respectively.

The initial mesh is obtained by one global refinement ($N_{\rm ref, 0} = 1$). 
{ Further, refinements are done with eight steps (hence $N_{\rm ref} = 8$).}
The approximation space for $u_h$ is $S_{h}^{2}$. For the auxiliary functions, we assume 
that $\boldsymbol{y}_h \in \oplus^2 S_{5h}^{3}$, and $w_h \in S_{5h}^{3}$. { 
Such a choice of discretisation
spaces saves computational efforts in reconstruction of the error estimates considerably.} 
Table \ref{tab:unit-domain-example-2-time-expenses-v-2-y-3-adapt-ref} 
illustrates the ratio between the time spent for approximating $u_h$ to the time spent for its error estimation, 
i.e., $\tfrac{t_{\rm appr.}}{t_{\rm er.est.}}$, along with total time needed 
for assembling and solving systems generating d.o.f. of $u_h$, $\boldsymbol{y}_h$, and $w_h$.
%----------------------------------------------------------------------------------------------------------------------------%
%
Table \ref{tab:unit-domain-example-2-error-majorant-v-2-y-3-adapt-ref} illustrates convergences of the 
different error measures, i.e., $|\!|\!| e |\!|\!|_Q$ that is bounded by the majorants 
$\overline{\rm M}^{\rm I}$ and $\overline{\rm M}^{\rm I\!I}$, $|\!| e |\!|_{l\!o\!c\!,h}$, and 
$|\!|\!|  e |\!|\!|_{\mathcal{L}}$ controlled by ${{\rm E \!\!\! Id}}$.
{ Here, we consider bulk marking with parameters $\sigma = 0.4$ and 
$\sigma = 0.6$.} Both cases provide slightly improved convergences in comparison to the expected 
$O(h^2)$ for $|\!|\!|  e |\!|\!|_{l\!o\!c\!,h}$ and $O(h)$ for $|\!|\!|  e |\!|\!|_{\mathcal{L}}$. 
The time expenses for the $u_h$ naturally get lower in the case of $\sigma = 0.6$, since 
the d.o.f.($u_h$) does not grow as fast as in case with $\sigma = 0.6$.
%----------------------------------------------------------------------------------------------------------------------------%

{
Moreover, we compare the error order of convergence in Figure \ref{fig:example-1-eoc}. 
Here, the majorant is reconstructed with auxiliary functions $\boldsymbol{y}_h \in \oplus^2 S_{h}^{3}$ 
($M = 1$) and $\boldsymbol{y}_h \in \oplus^2 S_{7h}^{3}$ ($M = 7$). } 
The numerical test demonstrates that the efficient error estimation and 
its local indication can be achieved even using auxiliary fluxes on a very course mesh (in this particular 
case, $7$ times courser then the mesh $\mathcal{K}_{h}$ for $u_h$). 

\begin{table}[!t]
\scriptsize
\centering
\newcolumntype{g}{>{\columncolor{gainsboro}}c} 	
\newcolumntype{k}{>{\columncolor{lightgray}}c} 	
\newcolumntype{s}{>{\columncolor{silver}}c} 
\newcolumntype{a}{>{\columncolor{ashgrey}}c}
\newcolumntype{b}{>{\columncolor{battleshipgrey}}c}
\begin{tabular}{c|cgg|c|cg|gc}
\parbox[c]{0.4cm}{\centering \# ref. } & 
\parbox[c]{1.2cm}{\centering  $\| \nabla_x e \|_Q$}   & 	  
%\parbox[c]{1.4cm}{\centering $\overline{\rm M}^{\rm I}$ } &    
\parbox[c]{1.0cm}{\centering $I_{\rm eff} (\overline{\rm M}^{\rm I})$ } & 
%\parbox[c]{1.4cm}{\centering $\overline{\rm M}^{\rm I\!I}$ } &    
\parbox[c]{1.0cm}{\centering $I_{\rm eff} (\overline{\rm M}^{\rm I\!I})$ } & 
\parbox[c]{1.2cm}{\centering  $|\!|\!|  e |\!|\!|_{l\!o\!c\!,h}$ }   & 	  
%\parbox[c]{1.4cm}{\centering $\overline{\rm M}^{\rm I}_{h}$ } &    
%\parbox[c]{1.4cm}{\centering $I_{\rm eff} (\overline{\rm M}^{\rm I}_{h})$ } & 
\parbox[c]{1.2cm}{\centering  $|\!|\!|  e |\!|\!|_{\mathcal{L}}$ }   & 	  
%\parbox[c]{1.4cm}{\centering ${{\rm E \!\!\! Id}}$  }&    
\parbox[c]{1.0cm}{\centering$I_{\rm eff} ({{\rm E \!\!\! Id}})$ } & 
\parbox[c]{1.2cm}{\centering e.o.c. ($|\!|\!|  e |\!|\!|_{l\!o\!c\!,h}$)} & 
\parbox[c]{1.2cm}{\centering e.o.c. ($|\!|\!|  e |\!|\!|_{\mathcal{L}}$)} \\[3pt]
\bottomrule
\multicolumn{9}{l}{ \rule{0pt}{3ex}   
(a) $\sigma =0.4$}\\[3pt]
\toprule
  2 &     2.5516e-03 &         1.07 &         1.03 &     2.5520e-03 &  %       1.07 &     
  7.9057e-02 &         1.00 &     3.43 &     1.71 \\
%   3 &     6.3789e-04 &         1.20 &         1.10 &     6.3789e-04 &         1.20 &     3.9528e-02 &         1.00 &     2.71 &     1.36 \\
   4 &     2.2743e-04 &         1.41 &         1.19 &     2.2745e-04 &  %       1.41 &     
   2.1712e-02 &         1.00 &     2.36 &     1.37 \\
%   5 &     9.6753e-05 &         1.34 &         1.13 &     9.6757e-05 &         1.34 &     1.3797e-02 &         1.00 &     1.35 &     0.71 \\
   6 &     2.9936e-05 &         1.09 &         1.02 &     2.9936e-05 &  %       1.09 &     
   7.9512e-03 &         1.00 &     2.71 &     1.28 \\
%   7 &     9.9932e-06 &         1.06 &         1.01 &     9.9932e-06 &         1.06 &     4.7835e-03 &         1.00 &     2.71 &     1.26 \\
   8 &     4.9501e-06 &         1.12 &         1.05 &     4.9501e-06 &  %       1.12 &     
   3.1138e-03 &         1.00 &     1.51 &     0.93 \\
\bottomrule
\multicolumn{9}{l}{ \rule{0pt}{3ex}   
(b) $\sigma =0.6$}\\[3pt]
\toprule
  2 &     2.5516e-03 &         1.07 &         1.03 &     2.5520e-03 & %        1.07 &     
  7.9057e-02 &         1.00 &     3.43 &     1.71 \\
%   3 &     6.3789e-04 &         1.20 &         1.10 &     6.3789e-04 &         1.20 &     3.9528e-02 &         1.00 &     2.71 &     1.36 \\
   4 &     3.3298e-04 &         1.30 &         1.11 &     3.3305e-04 & %        1.30 &     
   2.5410e-02 &         1.00 &     1.80 &     1.22 \\
%   5 &     1.6458e-04 &         1.35 &         1.14 &     1.6465e-04 &         1.35 &     1.8379e-02 &         1.00 &     1.71 &     0.79 \\
   6 &     5.9048e-05 &         1.34 &         1.14 &     5.9050e-05 &  %       1.34 &     
   1.0976e-02 &         1.00 &     3.18 &     1.60 \\
%   7 &     3.6659e-05 &         1.38 &         1.19 &     3.6659e-05 &         1.38 &     9.2416e-03 &         1.00 &     1.45 &     0.52 \\
   8 &     2.3071e-05 &         1.25 &         1.10 &     2.3072e-05 &  %       1.25 &     
   6.7335e-03 &         1.00 &     2.06 &     1.41 \\
\end{tabular}
\caption{{\em Example 1}. 
Efficiency of $\overline{\rm M}^{\rm I}$, $\overline{\rm M}^{\rm I\!I}$, and ${{\rm E \!\!\! Id}}$ and for 
$u_h \in S^{2}_{h}$, $\boldsymbol{y}_h \in \oplus^2 S^{3}_{5h}$, and $w_h \in S^{3}_{5h}$, and 
{ order of convergence} for $|\!|\!|  e |\!|\!|_{l\!o\!c\!,h}$ and $|\!|\!|  e |\!|\!|_{\mathcal{L}}$ 
($N_{\rm ref, 0} = 1$).}
\label{tab:unit-domain-example-2-error-majorant-v-2-y-3-adapt-ref}
\end{table}

\begin{table}[!t]
\scriptsize
\centering
\newcolumntype{g}{>{\columncolor{gainsboro}}c} 	
\begin{tabular}{c|ccc|cgg|cgg|c}%|cc|cc}
& \multicolumn{3}{c|}{ d.o.f. } 
& \multicolumn{3}{c|}{ $t_{\rm as}$ }
& \multicolumn{3}{c|}{ $t_{\rm sol}$ } 
& $\tfrac{t_{\rm appr.}}{t_{\rm er.est.}}$ \\
%& \multicolumn{4}{c}{ $t_{\rm e/w}$} \\
\midrule
\parbox[c]{0.2cm}{\# ref. } & 
\parbox[c]{0.4cm}{\centering $u_h$ } &  
\parbox[c]{0.4cm}{\centering $\boldsymbol{y}_h$ } &  
\parbox[c]{0.4cm}{\centering $w_h$ } & 
\parbox[c]{0.8cm}{\centering $u_h$ } & 
\parbox[c]{0.8cm}{\centering $\boldsymbol{y}_h$ } & 
\parbox[c]{0.8cm}{\centering $w_h$ } & 
\parbox[c]{0.8cm}{\centering $u_h$ } & 
\parbox[c]{0.8cm}{\centering $\boldsymbol{y}_h$ } & 
\parbox[c]{0.8cm}{\centering $w_h$ } &
%\tfrac{t_{\rm appr.}}{t_{\rm e. est.}}
\\
\bottomrule
\multicolumn{10}{l}{ \rule{0pt}{3ex}   
(a) $\sigma =0.4$ } \\[3pt]
\toprule
%   2 &         36 &         50 &         25 &   2.68e-02 &   1.57e-02 &   1.53e-02 &         1.56e-04 &         1.43e-04 &         1.27e-04 &             1.7 \\
   4 &        240 &         50 &         25 &   2.64e-01 &   1.44e-02 &   1.08e-02 &         4.01e-03 &         2.36e-04 &         1.15e-04 &            18.3 \\
   6 &       2027 &         50 &         25 &   2.42e+00 &   1.81e-02 &   1.61e-02 &         1.94e-01 &         2.23e-04 &         1.34e-04 &            142.66 \\
   8 &      11512 &        152 &         76 &   1.35e+01 &   1.97e-01 &   1.68e-01 &         3.39e+00 &         7.13e-04 &         3.76e-04 &            85.42 \\
%---------------------------------------------------------------------------------------\\
    \midrule
    &       &         &    &
    \multicolumn{3}{c|}{ $t_{\rm as} (u_h)$ \;:\; $t_{\rm as} (\boldsymbol{y}_h)$ \;:\; $t_{\rm as} (w_h)$ } &      
    \multicolumn{3}{c|}{\; $t_{\rm sol} (u_h)$ \;:\; $t_{\rm sol} (\boldsymbol{y}_h)$  \;:\;  $t_{\rm sol} (w_h)$\;} & \\%&    
 \midrule
	 & 	 & 	 & 	 &      80.13 &       1.17 &       1.00 &          9021.85 &             1.90 &             1.00 &             \\
%---------------------------------------------------------------------------------------\\
\bottomrule
\multicolumn{10}{l}{ \rule{0pt}{3ex}   
(b) $\sigma =0.6$} \\[3pt]
\toprule
%
%   2 &         36 &         50 &         25 &   2.64e-02 &   1.03e-02 &   1.68e-02 &         1.79e-04 &         2.42e-04 &         4.80e-05 &             2.52 \\
   4 &        206 &         50 &         25 &   2.27e-01 &   1.94e-02 &   1.54e-02 &         3.35e-03 &         1.66e-04 &         8.60e-05 &             11.77 \\
   6 &        896 &         50 &         25 &   1.07e+00 &   1.20e-02 &   1.96e-02 &         4.17e-02 &         2.30e-04 &         1.25e-04 &            90.89 \\
   8 &       2706 &        158 &         79 &   3.44e+00 &   1.65e-01 &   1.37e-01 &         2.69e-01 &         6.72e-04 &         7.31e-04 &            22.3 \\
    \midrule
    &       &         &    &
    \multicolumn{3}{c|}{ $t_{\rm as} (u_h)$ \;:\; $t_{\rm as} (\boldsymbol{y}_h)$ \;:\; $t_{\rm as} (w_h)$ } &      
    \multicolumn{3}{c|}{\; $t_{\rm sol} (u_h)$ \;:\; $t_{\rm sol} (\boldsymbol{y}_h)$  \;:\;  $t_{\rm sol} (w_h)$\;} & \\%&    
 \midrule
 	 & 	 & 	 & 	 &      25.20 &       1.21 &       1.00 &           367.33 &             0.92 &             1.00 &            12.25 \\
\end{tabular}
\caption{{\em Example 1}. 
Assembling and solving time (in seconds) spent for the systems defining d.o.f. of 
$u_h \in S^{2}_{h}$, 
$\boldsymbol{y}_h \in \oplus^2 S^{3}_{5h}$, and 
$w_h \in S^{3}_{5h}$ ($N_{\rm ref, 0} = 1$).}
\label{tab:unit-domain-example-2-time-expenses-v-2-y-3-adapt-ref}
\end{table}

\begin{figure}[!t]
	\centering
	\includegraphics[scale=0.6]{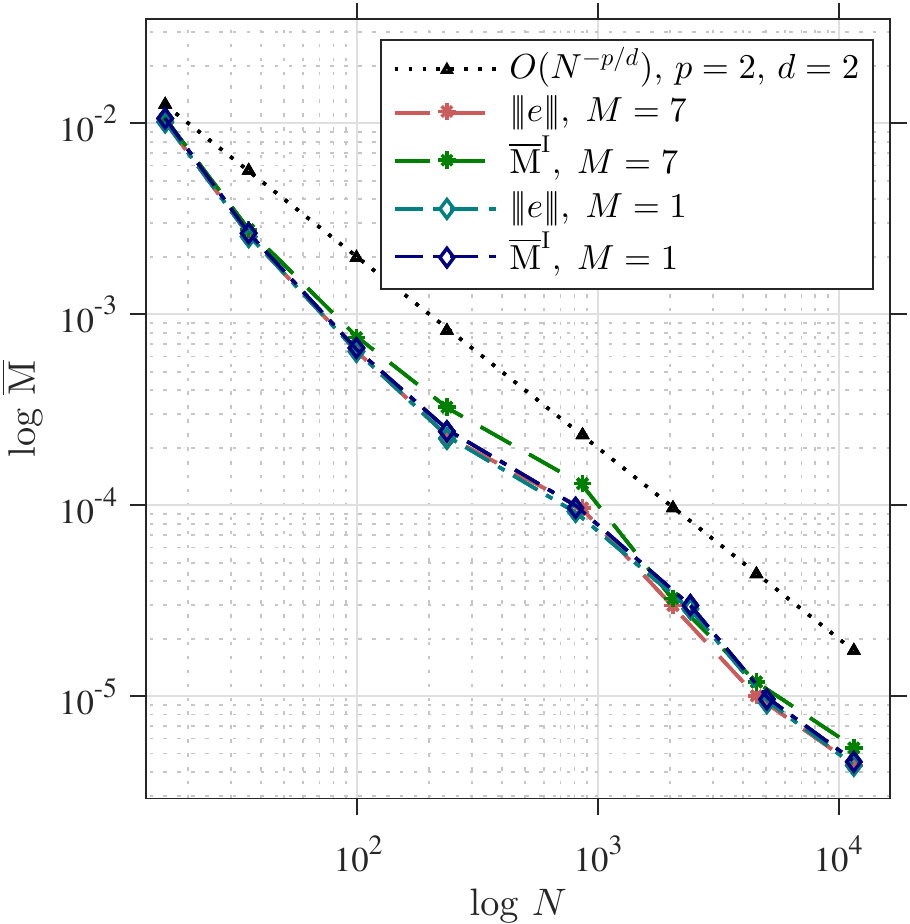}
	\label{fig:example-3-2-exact-solution-a}
	\caption{{\em Example 1}. 
	Comparison of the error and majorant $\overline{\rm  M}^{\rm I}$ 
	order of convergence for $\boldsymbol{y}_h \in \oplus^2 S_{7h}^{3}$ and for 
	$\boldsymbol{y}_h \in \oplus^2 S_{h}^{3}$.}
	\label{fig:example-1-eoc}
\end{figure}

\subsection{Example 2: parameterized solution}
%\label{ex:unit-domain-example-3}
\rm
Next, we discuss an example with the parameterized exact solution. Let $Q = (0, 1)^2$ be a unit square, 
and let the exact solution, the RHS, and the Dirichlet boundary condition be chosen as follows:
\begin{alignat*}{4}
u(x, t) 	& = \sin k_1\,\pi\,x\, \sin k_2\,\pi\,t  			
\quad && (x, t) && \in \overline{Q} = [0, 1]^2, \\
f(x, t) 		& = \sin k_1\,\pi\,x\,
\Big( k_2\,\pi\,\cos k_2 \, \pi \, t + k_1^2 \, \pi^2 \, \sin k_2 \, \pi \, t\Big) 		
\quad && (x, t) && \in {Q} = (0, 1)^2, \\
u_0(x, t) 	& = 0,	     								
\quad && (x, t)  && \in 
%\Sigma_0, \\
{\overline{\Sigma}}_0, \\
u_D(x, t) 	& = 0,							        		
\quad && (x, t) && \in \Sigma := \partial \Omega \times (0, 1).
\end{alignat*}
% 
%---------------------------------------------------------------------------------------------------------------------------%
\begin{figure}[!t]
	\centering
	\quad
	\subfloat[$k_1 = k_2 = 1$]{
	\includegraphics[scale=0.6]{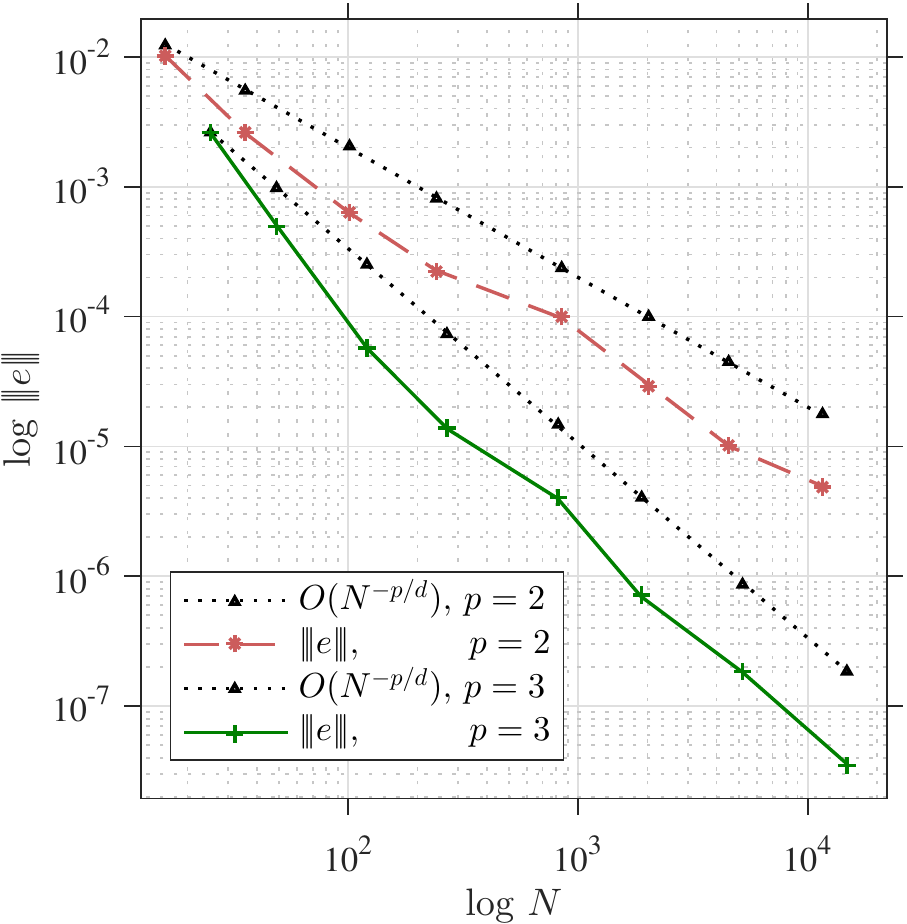}
	\label{fig:example-3-eoc-a}}
	\subfloat[$k_1 = 6, k_2 = 3$]{
	\includegraphics[scale=0.6]{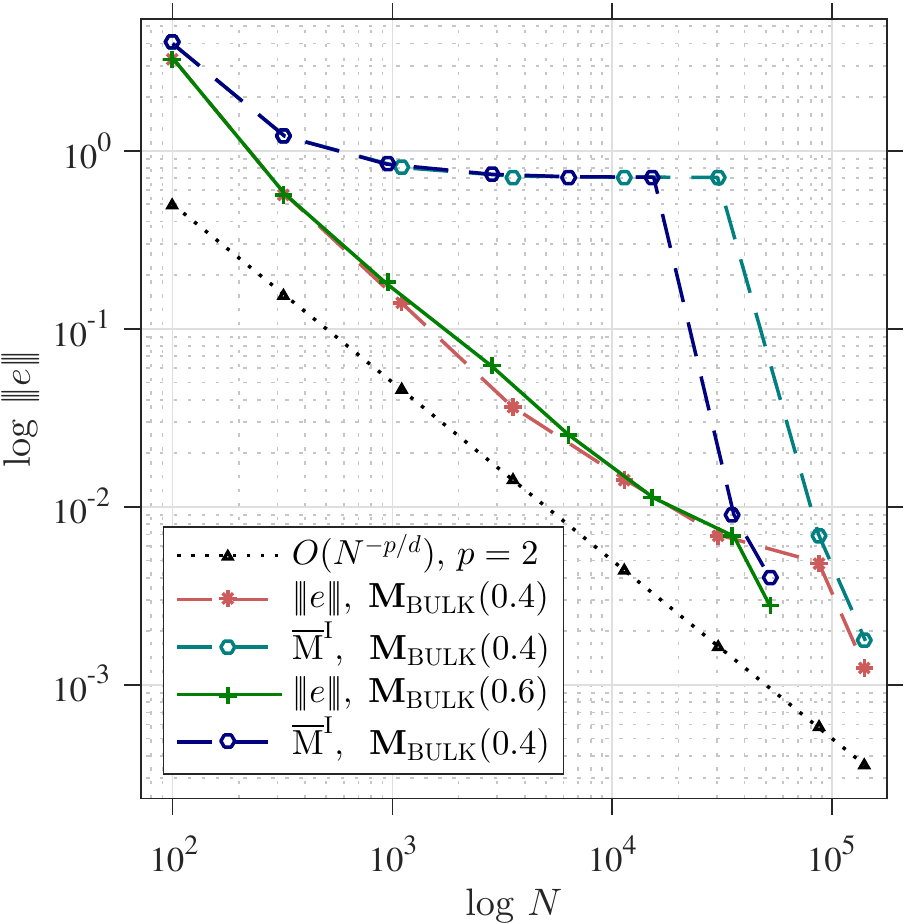}
	\label{fig:example-4-eoc-b}}
	\caption{{\em Example 2}. 
	The error order of convergence for (a) $k_1 = k_2 = 1$ and (b) $k_1 = 6, \; k_2 = 3$.}
	\label{fig:example-2-eoc}
\end{figure}
%
%---------------------------------------------------------------------------------------------------------------------------%
First, we set $k_1 = k_2 = 1$ ({\em Example 2-1}). 
We consider eight adaptive refinement steps ($N_{\rm ref} = 8$) preceded by three global refinements 
($N_{\rm ref, 0} = 3$) to generate the initial mesh $\mathcal{K}^0_{h}$. For the marking criterion, 
we chose bulk parameter 
$\sigma = 0.4$. The function $u_h$ is approximated both by $S_h^{2}$ (case (a)) and $S_h^{3}$ 
(case (b)) spaces, whereas corresponding auxiliary functions by 
$\boldsymbol{y}_h \in \oplus^2 S^{4}_{7h}$ and $w_h \in S^{4}_{7h}$ as well as  
$\boldsymbol{y}_h \in \oplus^2 S^{6}_{5h}$ and $w_h \in S^{6}_{5h}$, respectively, see 
Tables 
\ref{tab:unit-domain-example-3-error-majorant-v-2-y-5-adapt-ref}--\ref{tab:unit-domain-example-3-time-expenses-v-2-y-5-adapt-ref}. 
Figure \ref{fig:example-3-eoc-a} illustrates different error orders of convergence for different 
approximations $u_h \in S_h^{2}$ and $u_h \in S_h^{3}$, which perform slightly better than expected
rates $O(h^2)$ and $O(h^3)$, respectively.

We also demonstrate the quantitative effectiveness of the error indication provided 
by $\overline{\rm M}^{\rm I}$. %---------------------------------------------------------------------------------------------------------------------------%
In Figure \ref{fig:unit-domain-example-3-mesh-ref}, the comparison of the meshes 
illustrates that the refinement based on local values of $\| \nabla_x e \|_K$ (first row) and the indicator 
$\overline{\rm m}^{\rm I}_{\rm d, K}$ (second row) provide similar adaptive patterns. 

\begin{table}[!t]
\scriptsize
\centering
\newcolumntype{g}{>{\columncolor{gainsboro}}c} 	
\newcolumntype{k}{>{\columncolor{lightgray}}c} 	
\newcolumntype{s}{>{\columncolor{silver}}c} 
\newcolumntype{a}{>{\columncolor{ashgrey}}c}
\newcolumntype{b}{>{\columncolor{battleshipgrey}}c}
\begin{tabular}{c|cgg|c|cg|gc}
\parbox[c]{0.4cm}{\centering \# ref. } & 
\parbox[c]{1.2cm}{\centering  $\| \nabla_x e \|_Q$}   & 	  
%\parbox[c]{1.4cm}{\centering $\overline{\rm M}^{\rm I}$ } &    
\parbox[c]{1.0cm}{\centering $I_{\rm eff} (\overline{\rm M}^{\rm I})$ } & 
%\parbox[c]{1.4cm}{\centering $\overline{\rm M}^{\rm I\!I}$ } &    
\parbox[c]{1.0cm}{\centering $I_{\rm eff} (\overline{\rm M}^{\rm I\!I})$ } & 
\parbox[c]{1.2cm}{\centering  $|\!|\!|  e |\!|\!|_{l\!o\!c\!,h}$ }   & 	  
%\parbox[c]{1.4cm}{\centering $\overline{\rm M}^{\rm I}_{h}$ } &    
%\parbox[c]{1.4cm}{\centering $I_{\rm eff} (\overline{\rm M}^{\rm I}_{h})$ } & 
\parbox[c]{1.2cm}{\centering  $|\!|\!|  e |\!|\!|_{\mathcal{L}}$ }   & 	  
%\parbox[c]{1.4cm}{\centering ${{\rm E \!\!\! Id}}$  }&    
\parbox[c]{1.0cm}{\centering$I_{\rm eff} ({{\rm E \!\!\! Id}})$ } & 
\parbox[c]{1.2cm}{\centering e.o.c. ($|\!|\!|  e |\!|\!|_{l\!o\!c\!,h}$)} & 
\parbox[c]{1.2cm}{\centering e.o.c. ($|\!|\!|  e |\!|\!|_{\mathcal{L}}$)} \\[3pt]
\bottomrule
\multicolumn{9}{l}{ \rule{0pt}{3ex}   
(a) $u_h \in S^{2}_{h}$, 
$\boldsymbol{y}_h \in \oplus^2 S^{4}_{7h}$, and 
$w_h \in S^{4}_{7h}$} \\[3pt]
\toprule
  2 &     2.9034e-03 &         1.94 &         1.17 &     3.0649e-03 & %         1.84 &     
  2.9197e-01 &         1.00 &     2.38 &     1.40 \\
%   3 &     9.1647e-04 &         1.92 &         1.20 &     9.2297e-04 &         1.91 &     1.5808e-01 &         1.00 &     2.08 &     1.06 \\
   4 &     3.3878e-04 &         3.14 &         1.33 &     3.5057e-04 & %   3.03 &     
   9.3154e-02 &         1.00 &     1.96 &     1.07 \\
%   5 &     1.2738e-04 &         2.68 &         1.42 &     1.2771e-04 &         2.68 &     6.3896e-02 &         1.00 &     2.08 &     0.78 \\
   6 &     4.8136e-05 &         4.13 &         1.70 &     4.8588e-05 &  %      4.09 &     
   3.7361e-02 &         1.00 &     2.36 &     1.31 \\
%   7 &     2.8259e-05 &         2.74 &         1.64 &     2.8272e-05 &         2.74 &     2.9389e-02 &         1.00 &     1.12 &     0.50 \\
   8 &     9.2649e-06 &         5.78 &         3.23 &     9.2835e-06 &  %       5.77 &     
   1.7351e-02 &         1.00 &     3.79 &     1.79 \\
\bottomrule
\multicolumn{9}{l}{ \rule{0pt}{3ex}   
(b) $u_h \in S^{3}_{h}$, 
$\boldsymbol{y}_h \in \oplus^2 S^{6}_{5h}$, and 
$w_h \in S^{6}_{5h}$} \\[3pt]
\toprule
  2 &     4.9924e-03 &         1.31 &         1.04 &     5.0700e-03 &%         1.29 &     
  1.1918e-01 &         1.00 &     5.08 &     4.18 \\
%   3 &     5.6849e-04 &         1.40 &         1.12 &     5.7072e-04 &         1.39 &     2.8809e-02 &         1.00 &     4.83 &     3.14 \\
   4 &     1.3562e-04 &         1.64 &         1.30 &     1.3591e-04 &         %1.64 &     
   8.9725e-03 &         1.00 &     3.56 &     2.89 \\
%   5 &     3.9747e-05 &         2.64 &         2.21 &     3.9777e-05 &         2.64 &     4.0927e-03 &         1.00 &     2.23 &     1.43 \\
   6 &     6.9962e-06 &        10.61 &        10.26 &     6.9982e-06 &       % 10.61 &     
   1.4163e-03 &         1.00 &     4.17 &     2.55 \\
 %  7 &     1.8092e-06 &         7.08 &         4.90 &     1.8112e-06 &         7.07 &     4.9896e-04 &         1.00 &     2.64 &     2.04 \\
   8 &     3.5507e-07 &         3.44 &         1.24 &     3.5535e-07 &         %3.44 &     
   1.6376e-04 &         1.00 &     3.11 &     2.13 \\
\end{tabular}
\caption{{\em Example 2-1}. 
Efficiency of $\overline{\rm M}^{\rm I}$, $\overline{\rm M}^{\rm I\!I}$, 
$\overline{\rm M}^{\rm I}_{h}$, ${{\rm E \!\!\! Id}}$, 
and { order of convergence} of $|\!|\!|  e |\!|\!|_{l\!o\!c\!,h}$ and $|\!|\!|  e |\!|\!|_{\mathcal{L}}$ 
for $\sigma =0.4$ ($N_{\rm ref, 0}$ = 3).}
\label{tab:unit-domain-example-3-error-majorant-v-2-y-5-adapt-ref}
\end{table}

\begin{table}[!t]
\scriptsize
\centering
\newcolumntype{g}{>{\columncolor{gainsboro}}c} 	
\begin{tabular}{c|ccc|cgg|cgg|c}%|cc|cc}
& \multicolumn{3}{c|}{ d.o.f. } 
& \multicolumn{3}{c|}{ $t_{\rm as}$ }
& \multicolumn{3}{c|}{ $t_{\rm sol}$ } 
& $\tfrac{t_{\rm appr.}}{t_{\rm er.est.}}$ \\
%& \multicolumn{4}{c}{ $t_{\rm e/w}$} \\
\midrule
\parbox[c]{0.2cm}{\# ref. } & 
\parbox[c]{0.4cm}{\centering $u_h$ } &  
\parbox[c]{0.4cm}{\centering $\boldsymbol{y}_h$ } &  
\parbox[c]{0.4cm}{\centering $w_h$ } & 
\parbox[c]{0.8cm}{\centering $u_h$ } & 
\parbox[c]{0.8cm}{\centering $\boldsymbol{y}_h$ } & 
\parbox[c]{0.8cm}{\centering $w_h$ } & 
\parbox[c]{0.8cm}{\centering $u_h$ } & 
\parbox[c]{0.8cm}{\centering $\boldsymbol{y}_h$ } & 
\parbox[c]{0.8cm}{\centering $w_h$ } &
%\tfrac{t_{\rm appr.}}{t_{\rm e. est.}}
\\
\bottomrule
\multicolumn{10}{l}{ \rule{0pt}{3ex}   
(a) $u_h \in S^{2}_{h}$, 
$\boldsymbol{y}_h \in \oplus^2 S^{4}_{7h}$, and 
$w_h \in S^{4}_{7h}$}\\[3pt]
\toprule
% 1 &        100 &        288 &        144 &   1.05e-01 &   3.86e-01 &   4.06e-01 &         7.14e-04 &         3.45e-03 &         1.68e-03 &             0.13 \\
%   2 &        254 &        288 &        144 &   3.06e-01 &   4.49e-01 &   4.49e-01 &         3.32e-03 &         3.52e-03 &         2.23e-03 &             0.34 \\
%   3 &        804 &        288 &        144 &   9.73e-01 &   4.54e-01 &   3.81e-01 &         2.30e-02 &         1.96e-03 &         2.58e-03 &             1.19 \\
%   4 &       2154 &        288 &        144 &   2.83e+00 &   4.08e-01 &   3.94e-01 &         9.97e-02 &         2.35e-03 &         1.69e-03 &             3.63 \\
   5 &       5695 &        288 &        144 &   7.89e+00 &   4.78e-01 &   3.84e-01 &         5.34e-01 &         2.36e-03 &         2.72e-03 &             17.53 \\
   6 &      12935 &        288 &        144 &   1.55e+01 &   3.97e-01 &   3.83e-01 &         2.17e+00 &         2.30e-03 &         1.37e-03 &            44.25 \\
   7 &      34037 &        288 &        144 &   4.90e+01 &   3.98e-01 &   3.73e-01 &         9.58e+00 &         3.36e-03 &         1.42e-03 &            145.95 \\
   8 &      61258 &        288 &        144 &   9.37e+01 &   3.80e-01 &   3.62e-01 &         2.42e+01 &         2.10e-03 &         1.83e-03 &           308.55 \\
 %---------------------------------------------------------------------------------------\\
    \midrule
    &       &         &    &
    \multicolumn{3}{c|}{ $t_{\rm as} (u_h)$ \;:\; $t_{\rm as} (\boldsymbol{y}_h)$ \;:\; $t_{\rm as} (w_h)$ } &      
    \multicolumn{3}{c|}{\; $t_{\rm sol} (u_h)$ \;:\; $t_{\rm sol} (\boldsymbol{y}_h)$  \;:\;  $t_{\rm sol} (w_h)$\;} & \\%&    
 \midrule
	 & 	 & 	 & 	 &     258.63 &       1.05 &       1.00 &         13252.51 &             1.15 &             1.00 &           \\
\bottomrule
 \multicolumn{10}{l}{ \rule{0pt}{3ex}   
(b) $u_h \in S^{3}_{h}$, 
$\boldsymbol{y}_h \in \oplus^2 S^{6}_{5h}$, and 
$w_h \in S^{6}_{5h}$} \\[3pt]
\toprule
%  1 &        100 &        338 &        169 &   1.07e-01 &   6.81e-01 &   7.76e-01 &         8.26e-04 &         3.09e-03 &         1.61e-03 &             0.07 \\
%   2 &        254 &        338 &        169 &   2.63e-01 &   7.76e-01 &   6.89e-01 &         2.33e-03 &         2.31e-03 &         2.55e-03 &             0.18 \\
%   3 &        842 &        338 &        169 &   1.08e+00 &   7.38e-01 &   7.53e-01 &         1.79e-02 &         3.26e-03 &         1.70e-03 &             0.73 \\
%   4 &       2339 &        338 &        169 &   2.72e+00 &   7.25e-01 &   7.26e-01 &         1.39e-01 &         3.88e-03 &         1.49e-03 &             1.96 \\
   5 &       6425 &        338 &        169 &   8.26e+00 &   6.93e-01 &   6.97e-01 &         7.12e-01 &         5.63e-03 &         3.73e-03 &            12.84 \\
   6 &      13742 &        338 &        169 &   1.62e+01 &   7.03e-01 &   7.03e-01 &         2.11e+00 &         2.53e-03 &         1.43e-03 &            25.95 \\
   7 &      35091 &        644 &        322 &   5.36e+01 &   5.65e+00 &   5.52e+00 &         1.10e+01 &         9.31e-03 &         5.29e-03 &          11.41 \\
   8 &      78561 &        744 &        372 &   1.91e+02 &   5.61e+00 &   5.03e+00 &         2.40e+01 &         2.51e-02 &         7.56e-03 &          38.15 \\
    \midrule
    &       &         &    &
    \multicolumn{3}{c|}{ $t_{\rm as} (u_h)$ \;:\; $t_{\rm as} (\boldsymbol{y}_h)$ \;:\; $t_{\rm as} (w_h)$ } &      
    \multicolumn{3}{c|}{\; $t_{\rm sol} (u_h)$ \;:\; $t_{\rm sol} (\boldsymbol{y}_h)$  \;:\;  $t_{\rm sol} (w_h)$\;} & \\%&    
 \midrule
 	 & 	 & 	 & 	 &      37.97 &       1.11 &       1.00 &          3168.34 &             3.31 &             1.00 &            \\
%---------------------------------------------------------------------------------------\\
\end{tabular}
\caption{{\em Example 2-1}. 
Assembling and solving time (in seconds) spent for the systems generating 
d.o.f. of $u_h$, $\boldsymbol{y}_h$, and $w_h$ for 
$\sigma =0.4$ ($N_{\rm ref, 0}$ = 3).}
\label{tab:unit-domain-example-3-time-expenses-v-2-y-5-adapt-ref}
\scriptsize
\end{table}

%---------------------------------------------------------------------------------------------------------------------------%
\begin{figure}[!t]
	\centering
	%\centering
	%\captionsetup[subfigure]{oneside}
%	\subfloat[ref. 4]{
%	 \LMRspacetimeaxis{
%	\includegraphics[width=3.6cm, trim={3.7cm 3.7cm 3.7cm 3.7cm}, clip]{LMR-space-time-example-3-adapt-marker-3-theta-40-M-7-L-7-v-2-y-4-w-4-total-ref-4-based-on-md.pdf}
%	}
%	}\!\!\!\!
	\subfloat[ref. 5]{
	 \LMRspacetimeaxis{
	\includegraphics[width=3.7cm, trim={3.7cm 3.7cm 3.7cm 3.7cm}, clip]{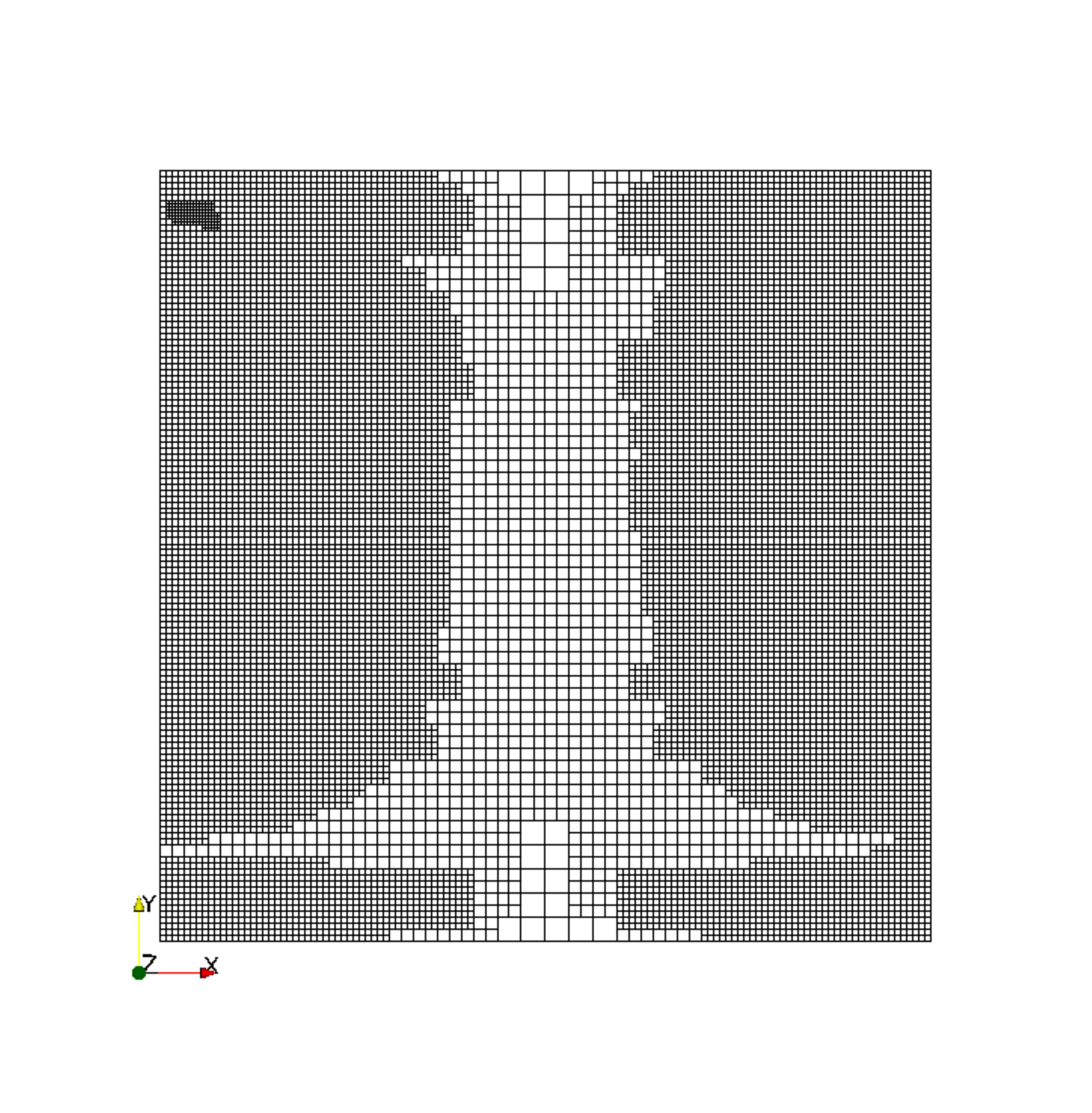}
	}
	} 
	\!\!\!\!
	\subfloat[ref. 6]{
	 \LMRspacetimeaxis{
	{
	\includegraphics[width=3.7cm, trim={3.7cm 3.7cm 3.7cm 3.7cm}, clip]{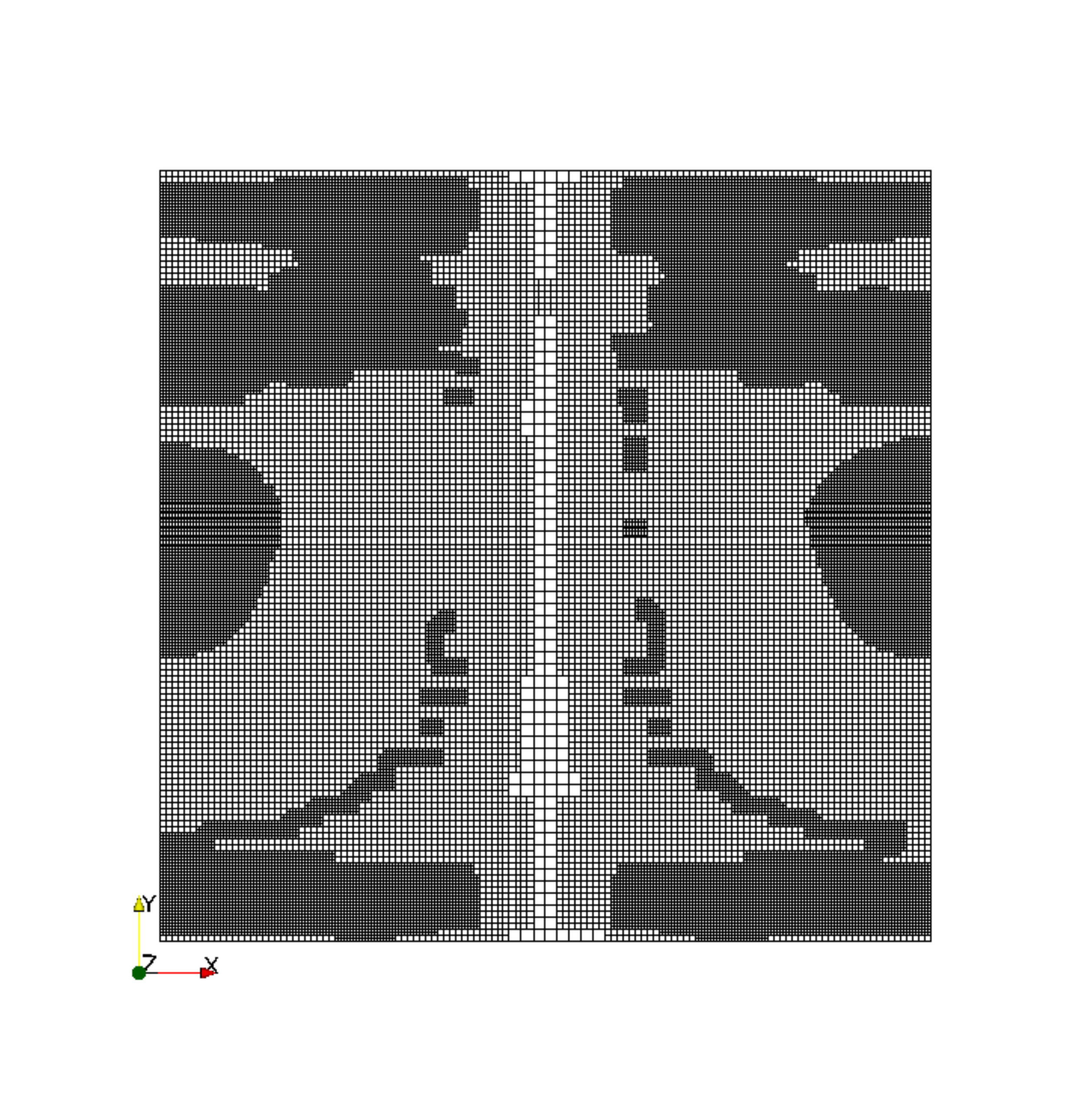}}}
	}
	\!\!\!\!
	\subfloat[ref. 7]{
	 \LMRspacetimeaxis{
	\includegraphics[width=3.7cm, trim={3.7cm 3.7cm 3.7cm 3.7cm}, clip]{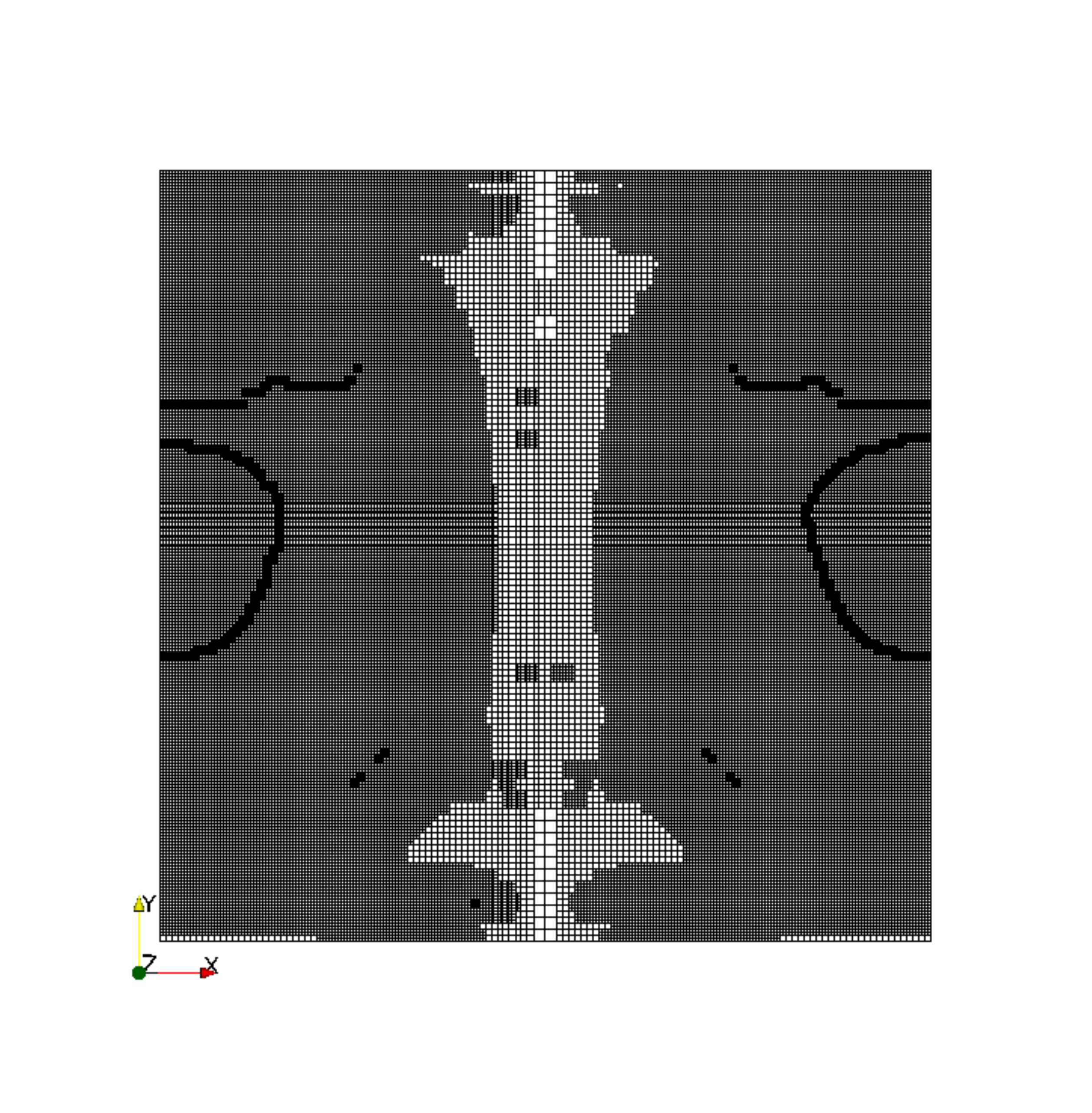}}
	} \\[-7pt]
	\subfloat[ref. 5]{
	 \LMRspacetimeaxis{
	\includegraphics[width=3.7cm, trim={5cm 3.2cm 5cm 3.2cm}, clip]{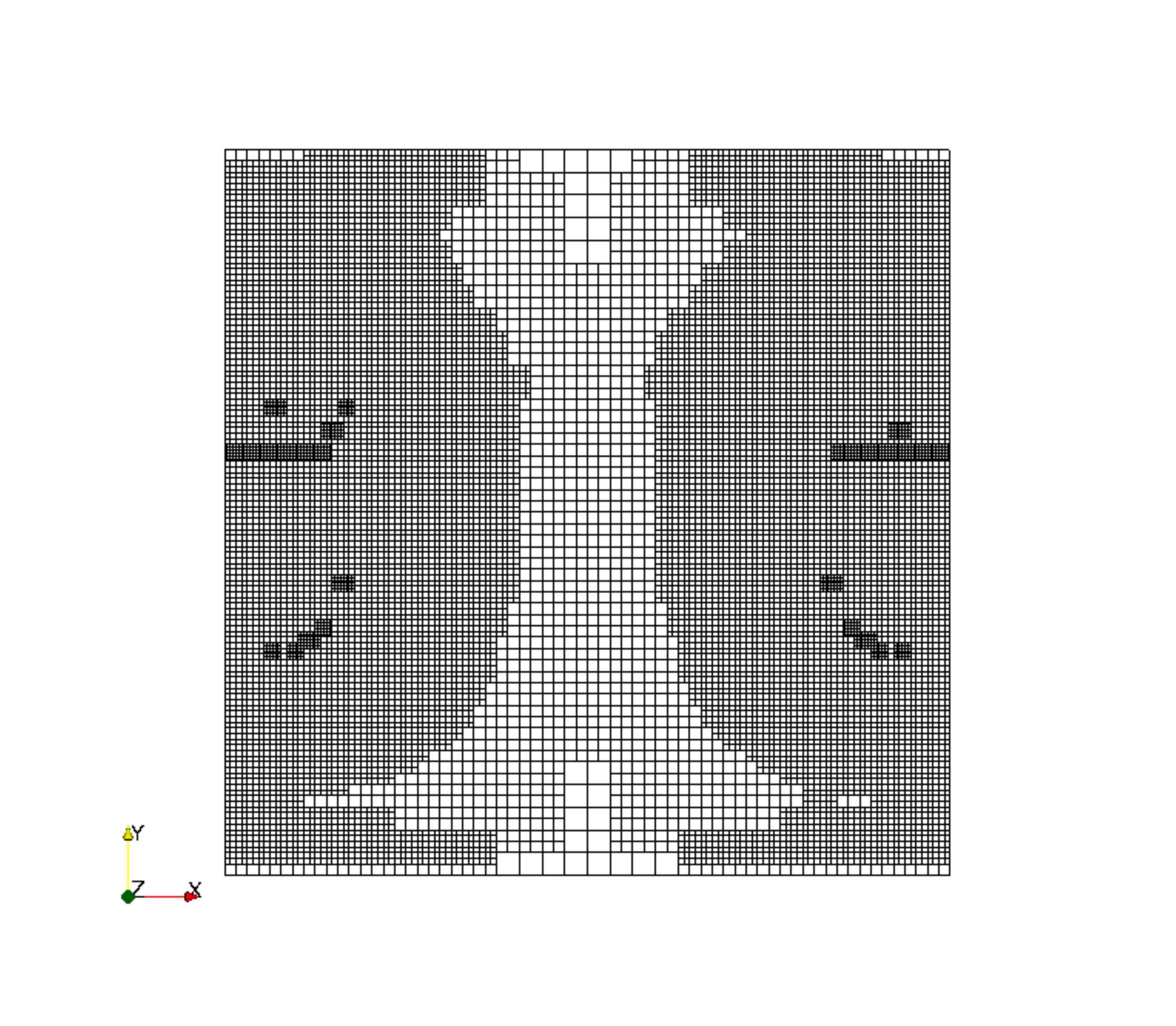}
	}
	} 
	\!\!\!\!
	\subfloat[ref. 6]{
	 \LMRspacetimeaxis{
	{
	\includegraphics[width=3.7cm, trim={5cm 3.2cm 5cm 3.2cm}, clip]{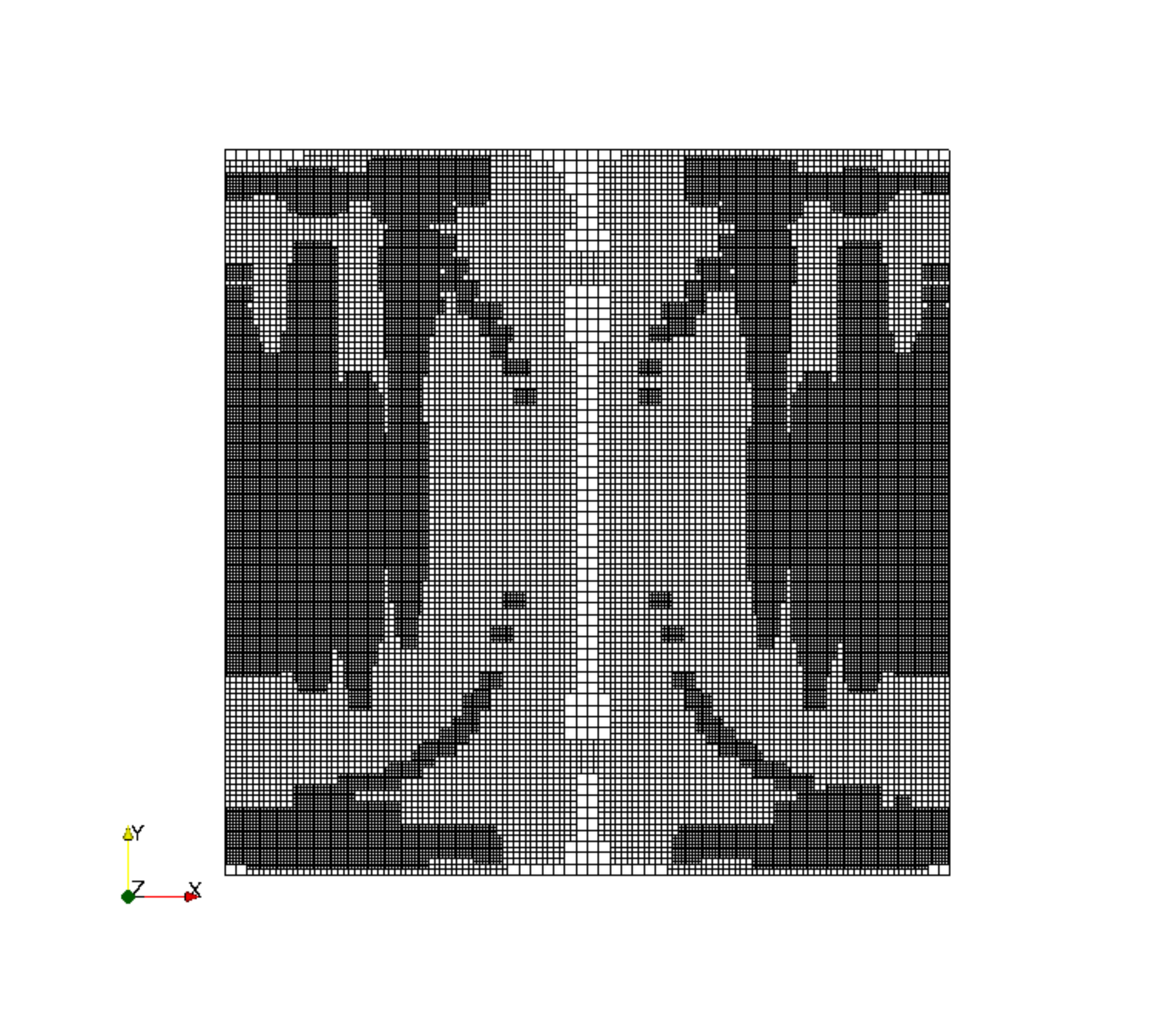}}}
	}
	\!\!\!\!
	\subfloat[ref. 7]{
	 \LMRspacetimeaxis{
	\includegraphics[width=3.7cm, trim={5cm 3.2cm 5cm 3.2cm}, clip]{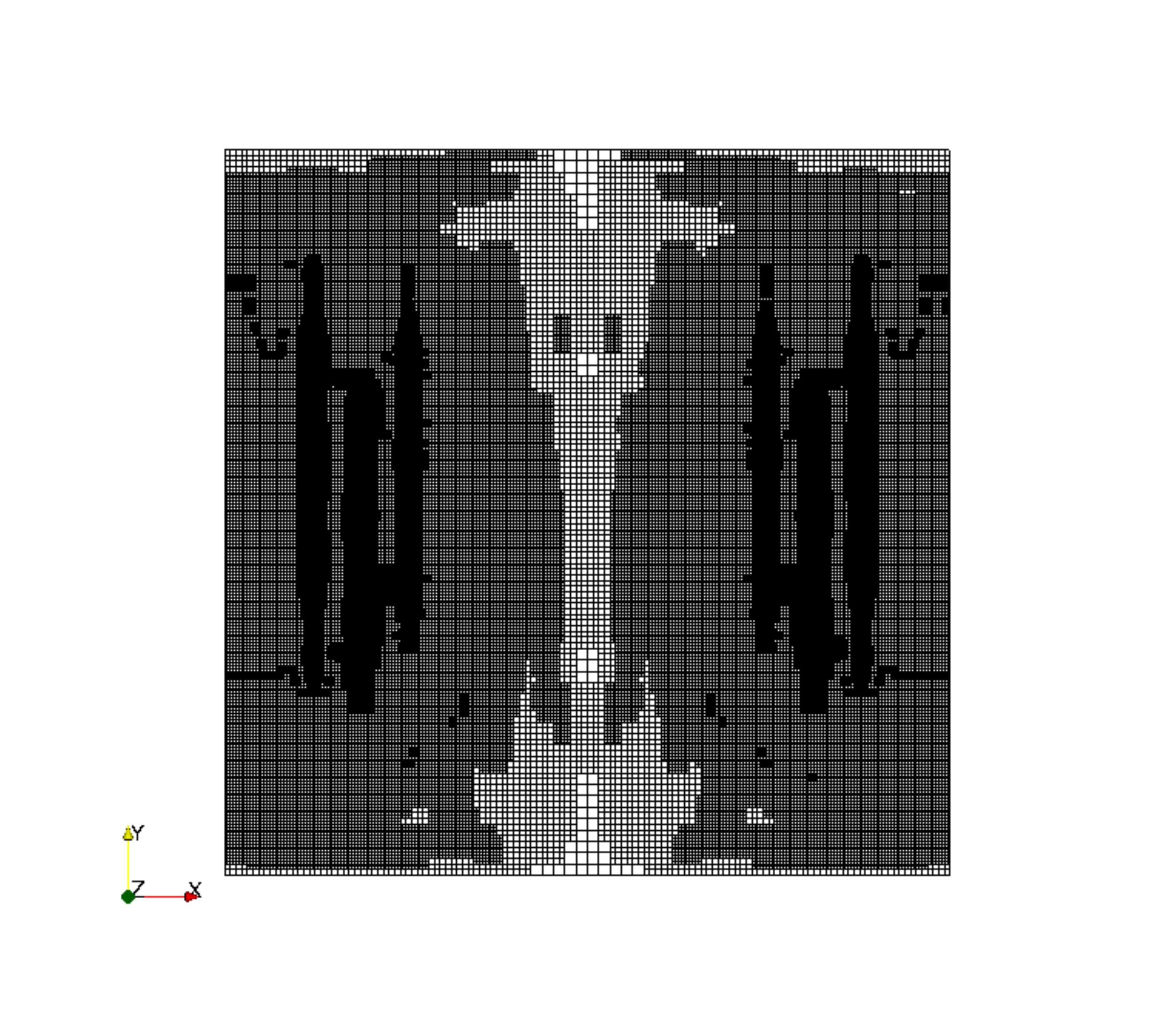}}
	} 
	\caption{\textit{Example 2-1}. Meshes obtained in the adaptive procedure 
	based on indicator $\overline{\rm m}^{\rm I}_{\rm d, K}$ (top row) and 
	based on the exact error $\| \nabla_x e \|_K$ (bottom row) w.r.t. refinement steps 5--7.}
	\label{fig:unit-domain-example-3-mesh-ref}
\end{figure}

%---------------------------------------------------------------------------------------------------------------------------%
Next, we set parameters $k_1 = 3$ and $k_2 = 6$. In this case, auxiliary variables are approximated by
$\boldsymbol{y}_h \in \oplus^2 S_{5h}^{7}$ and $w_h \in S^{7}_{5h}$. Figure 
\ref{fig:example-4-eoc-b} illustrates the order of convergence of errors and corresponding majorants 
(for two different marking strategy $\mathds{M}_{\rm BULK}(0.4)$ and $\mathds{M}_{\rm BULK}(0.6)$) 
and compares these results to the theoretical one $O(h^2)$. It is easy to see from the plot that efficiency 
of the majorant deteriorates on the first refinement steps, but it improves drastically on the last refinements. 
Tables \ref{tab:unit-domain-example-4-error-majorant-v-2-y-7-adapt-ref} and
\ref{tab:unit-domain-example-4-time-expenses-v-2-y-7-adapt-ref} compare numerical results obtained 
for different marking parameters $\sigma = 0.4$ (part (a)) and $\sigma = 0.6$ (part (b)). 
Finally, Figure \ref{fig:unit-domain-example-4-mesh-ref} demonstrates 
{ the evolution of meshes associated
with the refinement steps} 4--6 for the same cases. 

\begin{table}[!t]
\scriptsize
\centering
\newcolumntype{g}{>{\columncolor{gainsboro}}c} 	
\newcolumntype{k}{>{\columncolor{lightgray}}c} 	
\newcolumntype{s}{>{\columncolor{silver}}c} 
\newcolumntype{a}{>{\columncolor{ashgrey}}c}
\newcolumntype{b}{>{\columncolor{battleshipgrey}}c}
\begin{tabular}{c|cgg|c|cg|gc}
\parbox[c]{0.4cm}{\centering \# ref. } & 
\parbox[c]{1.2cm}{\centering  $\| \nabla_x e \|_Q$}   & 	  
%\parbox[c]{1.4cm}{\centering $\overline{\rm M}^{\rm I}$ } &    
\parbox[c]{1.0cm}{\centering $I_{\rm eff} (\overline{\rm M}^{\rm I})$ } & 
%\parbox[c]{1.4cm}{\centering $\overline{\rm M}^{\rm I\!I}$ } &    
\parbox[c]{1.0cm}{\centering $I_{\rm eff} (\overline{\rm M}^{\rm I\!I})$ } & 
\parbox[c]{1.2cm}{\centering  $|\!|\!|  e |\!|\!|_{l\!o\!c\!,h}$ }   & 	  
%\parbox[c]{1.4cm}{\centering $\overline{\rm M}^{\rm I}_{h}$ } &    
%\parbox[c]{1.4cm}{\centering $I_{\rm eff} (\overline{\rm M}^{\rm I}_{h})$ } & 
\parbox[c]{1.2cm}{\centering  $|\!|\!|  e |\!|\!|_{\mathcal{L}}$ }   & 	  
%\parbox[c]{1.4cm}{\centering ${{\rm E \!\!\! Id}}$  }&    
\parbox[c]{1.0cm}{\centering$I_{\rm eff} ({{\rm E \!\!\! Id}})$ } & 
\parbox[c]{1.2cm}{\centering e.o.c. ($|\!|\!|  e |\!|\!|_{l\!o\!c\!,h}$)} & 
\parbox[c]{1.2cm}{\centering e.o.c. ($|\!|\!|  e |\!|\!|_{\mathcal{L}}$)} \\[3pt]
\bottomrule
\multicolumn{9}{l}{ \rule{0pt}{3ex}   
(a) $\sigma =0.4$} \\[3pt]
\toprule
   2 &     5.7161e-01 &         2.11 &         1.38 &     5.7163e-01 & %        2.11 &     
   6.2371e+01 &         1.00 &     2.99 &     1.19 \\
   3 &     1.3927e-01 &         5.77 &         2.20 &     1.3928e-01 &  %       5.77 &     
   3.1026e+01 &         1.00 &     2.30 &     1.14 \\
 %  4 &     3.6772e-02 &        19.58 &         5.65 &     3.6774e-02 &        19.58 &     1.5685e+01 &         1.00 &     2.32 &     1.19 \\
 %  5 &     1.4231e-02 &        49.91 &        13.93 &     1.4231e-02 &        49.91 &     9.1125e+00 &         1.00 &     1.60 &     0.91 \\
 %  6 &     6.9869e-03 &       101.44 &        28.19 &     6.9870e-03 &       101.44 &     6.1092e+00 &         1.00 &     1.47 &     0.83 \\
   7 &     4.8735e-03 &         1.43 &         1.15 &     4.8736e-03 & %        1.43 &     
   4.7350e+00 &         1.00 &     0.68 &     0.48 \\
   8 &     1.2298e-03 &         1.44 &         1.16 &     1.2298e-03 &  %       1.44 &     
   2.6917e+00 &         1.00 &     5.60 &     2.30 \\
\bottomrule
\multicolumn{9}{l}{ \rule{0pt}{3ex}   
(b) $\sigma =0.6$} \\[3pt]
\toprule
    2 &     5.7161e-01 &         2.11 &         1.38 &     5.7163e-01 &  %      2.11 &     
    6.2371e+01 &         1.00 &     2.99 &     1.19 \\
   3 &     1.7942e-01 &         4.69 &         1.96 &     1.7945e-01 &   %      4.69 &     
   3.2971e+01 &         1.00 &     2.18 &     1.20 \\
%   4 &     6.1903e-02 &        11.91 &         3.65 &     6.1905e-02 &        11.91 &     1.8417e+01 &         1.00 &     1.94 &     1.06 \\
%   5 &     2.5428e-02 &        28.09 &         7.95 &     2.5429e-02 &        28.09 &     1.2630e+01 &         1.00 &     2.20 &     0.93 \\
%   6 &     1.1221e-02 &        63.23 &        17.61 &     1.1221e-02 &        63.23 &     7.9509e+00 &         1.00 &     1.83 &     1.04 \\
   7 &     6.8374e-03 &         1.32 &         1.12 &     6.8374e-03 &  %       1.32 &     
   5.8760e+00 &         1.00 &     1.18 &     0.72 \\
   8 &     2.7492e-03 &         1.44 &         1.15 &     2.7492e-03 &   %      1.44 &     
   4.0721e+00 &         1.00 &     4.75 &     1.91 \\
\end{tabular}
\caption{{\em Example 2-2}. 
Efficiency of $\overline{\rm M}^{\rm I}$, $\overline{\rm M}^{\rm I\!I}$, ${{\rm E \!\!\! Id}}$, and 
the order of convergence of $|\!|\!|  e |\!|\!|_{l\!o\!c\!,h}$ and $|\!|\!|  e |\!|\!|_{\mathcal{L}}$
for $u_h \in S^{2}_{h}$, $\boldsymbol{y}_h \in \oplus^2 S^{7}_{5h}$, and 
$w_h \in S^{7}_{5h}$ ($N_{\rm ref, 0}$ = 3).}
\label{tab:unit-domain-example-4-error-majorant-v-2-y-7-adapt-ref}
\end{table}

\begin{table}[!t]
\scriptsize
\centering
\newcolumntype{g}{>{\columncolor{gainsboro}}c} 	
\begin{tabular}{c|ccc|cgg|cgg|c}%|cc|cc}
& \multicolumn{3}{c|}{ d.o.f. } 
& \multicolumn{3}{c|}{ $t_{\rm as}$ }
& \multicolumn{3}{c|}{ $t_{\rm sol}$ } 
& $\tfrac{t_{\rm appr.}}{t_{\rm er.est.}}$ \\
%& \multicolumn{4}{c}{ $t_{\rm e/w}$} \\
\midrule
\parbox[c]{0.2cm}{\# ref. } & 
\parbox[c]{0.2cm}{\centering $u_h$ } &  
\parbox[c]{0.2cm}{\centering $\boldsymbol{y}_h$ } &  
\parbox[c]{0.2cm}{\centering $w_h$ } & 
\parbox[c]{0.8cm}{\centering $u_h$ } & 
\parbox[c]{0.8cm}{\centering $\boldsymbol{y}_h$ } & 
\parbox[c]{0.8cm}{\centering $w_h$ } & 
\parbox[c]{0.8cm}{\centering $u_h$ } & 
\parbox[c]{0.8cm}{\centering $\boldsymbol{y}_h$ } & 
\parbox[c]{0.8cm}{\centering $w_h$ } &
%\tfrac{t_{\rm appr.}}{t_{\rm e. est.}}
\\
\bottomrule
\multicolumn{10}{l}{ \rule{0pt}{3ex}   
(a) $\sigma = 0.4$} \\[2pt]
\toprule
%
%  1 &        100 &        450 &        225 &   9.91e-02 &   2.36e+00 &   2.62e+00 &         8.20e-04 &         8.06e-03 &         2.83e-03 &             0.02 \\
%   2 &        324 &        450 &        225 &   2.80e-01 &   2.48e+00 &   2.75e+00 &         4.07e-03 &         1.08e-02 &         9.44e-03 &             0.05 \\
%   3 &       1104 &        450 &        225 &   1.23e+00 &   3.58e+00 &   2.76e+00 &         2.83e-02 &         7.65e-03 &         3.81e-03 &             0.20 \\
%   4 &       3484 &        450 &        225 &   4.03e+00 &   2.74e+00 &   2.51e+00 &         1.69e-01 &         9.92e-03 &         2.41e-03 &             0.80 \\
   5 &      11426 &        450 &        225 &   1.50e+01 &   2.28e+00 &   2.92e+00 &         1.20e+00 &         7.89e-03 &         3.37e-03 &             3.11 \\
   6 &      30101 &        450 &        225 &   5.99e+01 &   2.29e+00 &   2.92e+00 &         3.57e+00 &         8.52e-03 &         4.33e-03 &            12.14 \\
   7 &      86849 &       1058 &        529 &   3.57e+02 &   9.30e+00 &   9.41e+00 &         1.11e+01 &         5.19e-02 &         3.47e-02 &            19.58 \\
   8 &     141987 &       2850 &       1425 &   6.36e+02 &   6.50e+01 &   5.91e+01 &         2.56e+01 &         3.00e-01 &         1.29e-01 &             5.31 \\
%---------------------------------------------------------------------------------------\\
   \midrule
    &       &         &    &
    \multicolumn{3}{c|}{ $t_{\rm as} (u_h)$ \;:\; $t_{\rm as} (\boldsymbol{y}_h)$ \;:\; $t_{\rm as} (w_h)$ } &      
    \multicolumn{3}{c|}{\; $t_{\rm sol} (u_h)$ \;:\; $t_{\rm sol} (\boldsymbol{y}_h)$  \;:\;  $t_{\rm sol} (w_h)$\;} & \\%&    
 \midrule
	 & 	 & 	 & 	 &      10.76 &       1.10 &       1.00 &           198.84 &             2.32 &             1.00 &             \\
\bottomrule
\multicolumn{10}{l}{ \rule{0pt}{3ex}   
(b) $\sigma = 0.6$} \\[2pt]
\toprule
%   1 &        100 &        450 &        225 &   1.15e-01 &   5.79e+00 &   3.63e+00 &         7.51e-04 &         8.54e-03 &         4.79e-03 &             0.01 \\
%   2 &        324 &        450 &        225 &   4.32e-01 &   4.37e+00 &   4.12e+00 &         5.23e-03 &         8.78e-03 &         7.76e-03 &             0.05 \\
%   3 &        940 &        450 &        225 &   1.12e+00 &   2.45e+00 &   2.63e+00 &         2.06e-02 &         7.42e-03 &         8.75e-03 &             0.22 \\
%   4 &       2818 &        450 &        225 &   4.11e+00 &   2.32e+00 &   2.48e+00 &         1.39e-01 &         1.36e-02 &         2.75e-03 &             0.88 \\
   5 &       6320 &        450 &        225 &   9.30e+00 &   3.17e+00 &   2.57e+00 &         3.95e-01 &         9.80e-03 &         4.51e-03 &             3.04 \\
   6 &      15436 &        450 &        225 &   2.61e+01 &   2.36e+00 &   2.41e+00 &         1.77e+00 &         1.45e-02 &         3.12e-03 &             11.73 \\
   7 &      35745 &       1058 &        529 &   8.99e+01 &   9.86e+00 &   1.01e+01 &         4.68e+00 &         7.06e-02 &         4.12e-02 &             9.52 \\
   8 &      52453 &       2498 &       1249 &   1.05e+02 &   8.03e+01 &   7.08e+01 &         7.38e+00 &         3.47e-01 &         1.66e-01 &             1.39 \\
   \midrule
    &       &         &    &
    \multicolumn{3}{c|}{ $t_{\rm as} (u_h)$ \;:\; $t_{\rm as} (\boldsymbol{y}_h)$ \;:\; $t_{\rm as} (w_h)$ } &      
    \multicolumn{3}{c|}{\; $t_{\rm sol} (u_h)$ \;:\; $t_{\rm sol} (\boldsymbol{y}_h)$  \;:\;  $t_{\rm sol} (w_h)$\;} & \\%&    
 \midrule
 	 & 	 & 	 & 	 &       1.49 &       1.13 &       1.00 &            44.46 &             2.09 &             1.00 &              \\
\end{tabular}
\caption{{\em Example 2-2}. 
Assembling and solving time (in seconds) spent for the systems generating d.o.f. of 
$u_h \in S^{2}_{h}$, $\boldsymbol{y}_h \in \oplus^2 S^{7}_{5h}$, and 
$w_h \in S^{7}_{5h}$ ($N_{\rm ref, 0}$ = 3).}
\label{tab:unit-domain-example-4-time-expenses-v-2-y-7-adapt-ref}
\end{table}

\begin{figure}[!t]
	\subfloat[ref. 4]{
	 \LMRspacetimeaxis{\includegraphics[width=3.7cm, trim={3.7cm 3.7cm 3.7cm 3.7cm}, clip]{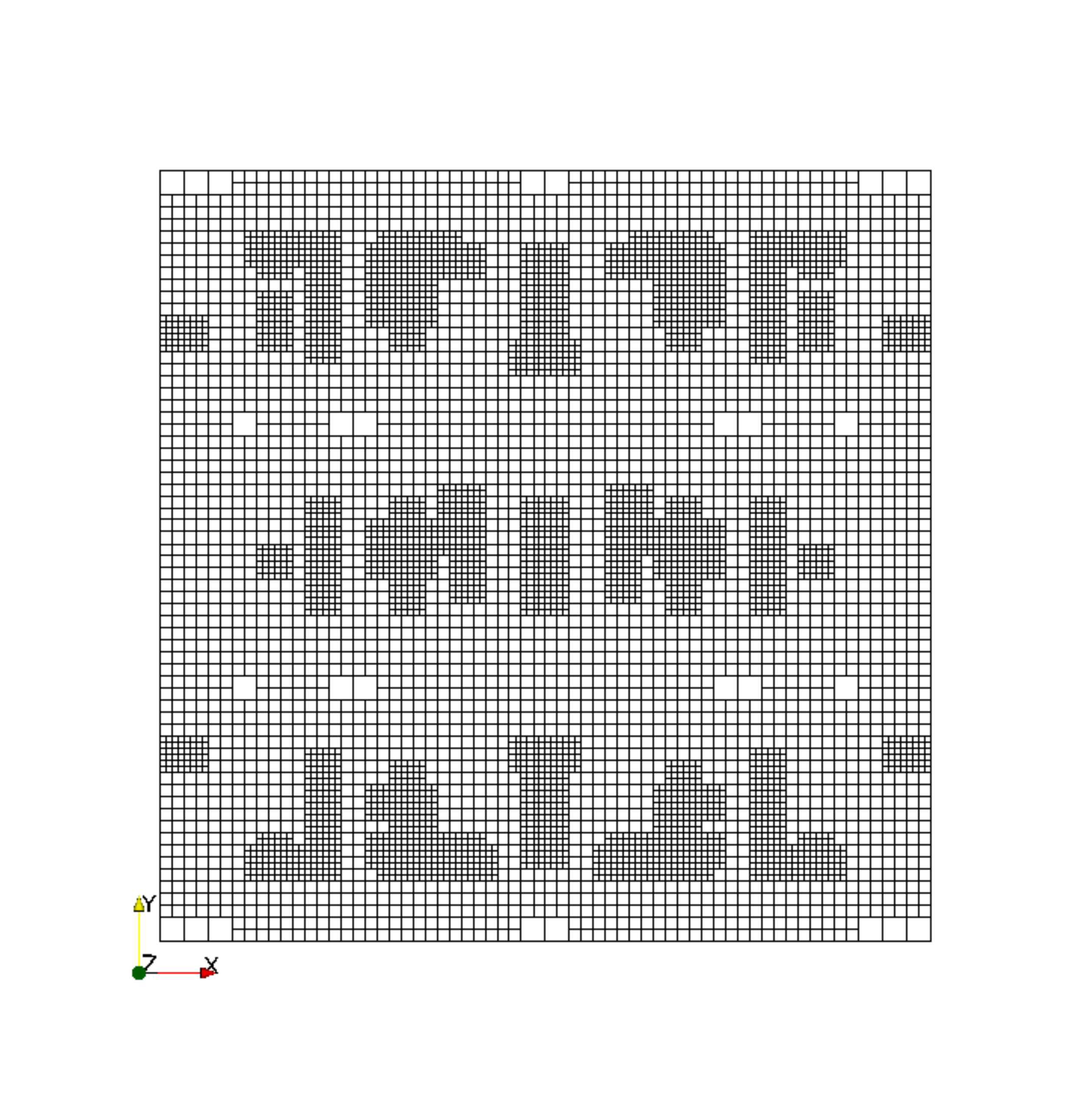}
	}
	}\!\!\!\!
	\subfloat[ref. 5]{
	 \LMRspacetimeaxis{\includegraphics[width=3.7cm, trim={3.7cm 3.7cm 3.7cm 3.7cm}, clip]{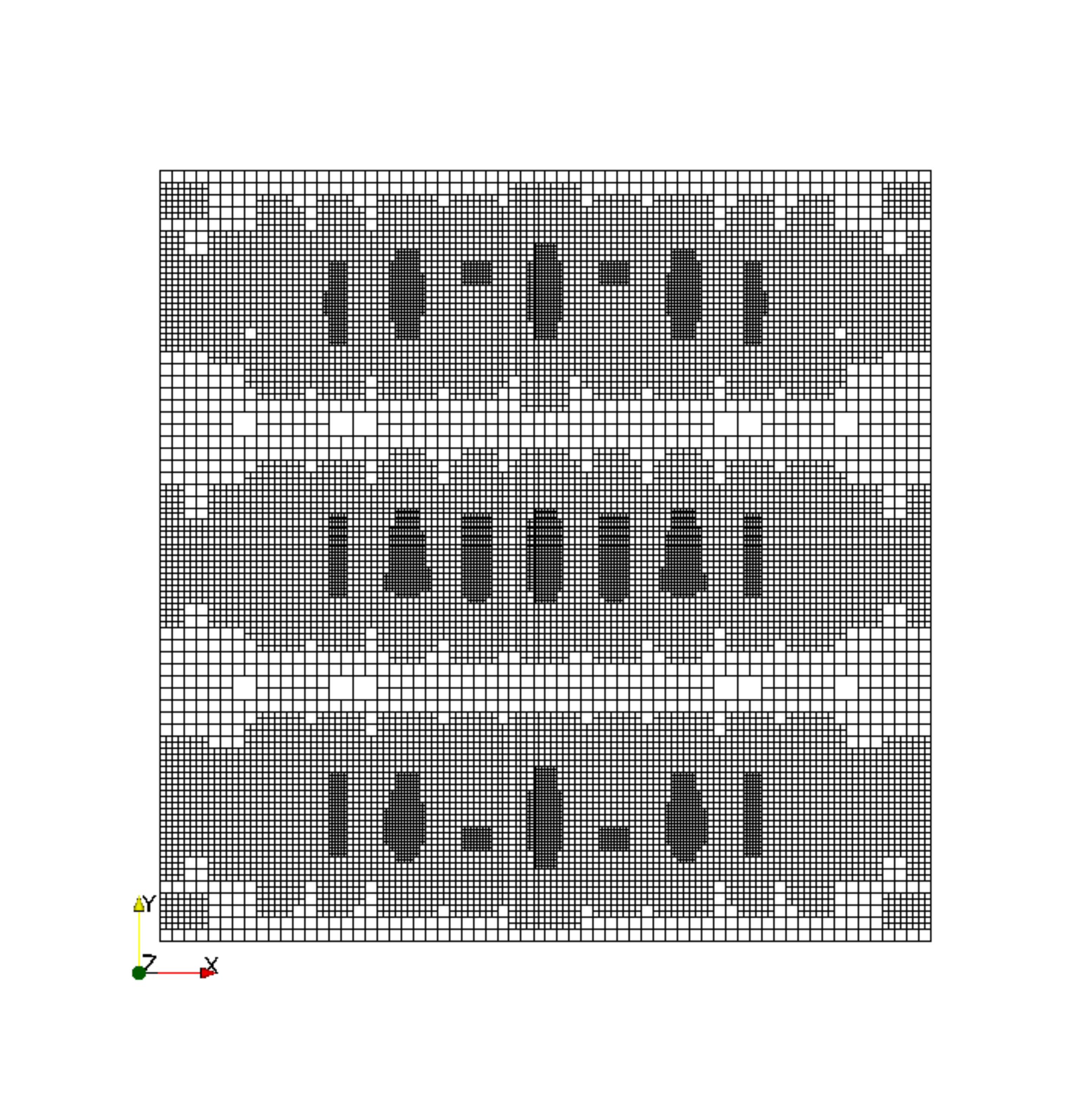}
	}
	}\!\!\!\!
	\subfloat[ref. 6]{
	 \LMRspacetimeaxis{\includegraphics[width=3.7cm, trim={3.7cm 3.7cm 3.7cm 3.7cm}, clip]{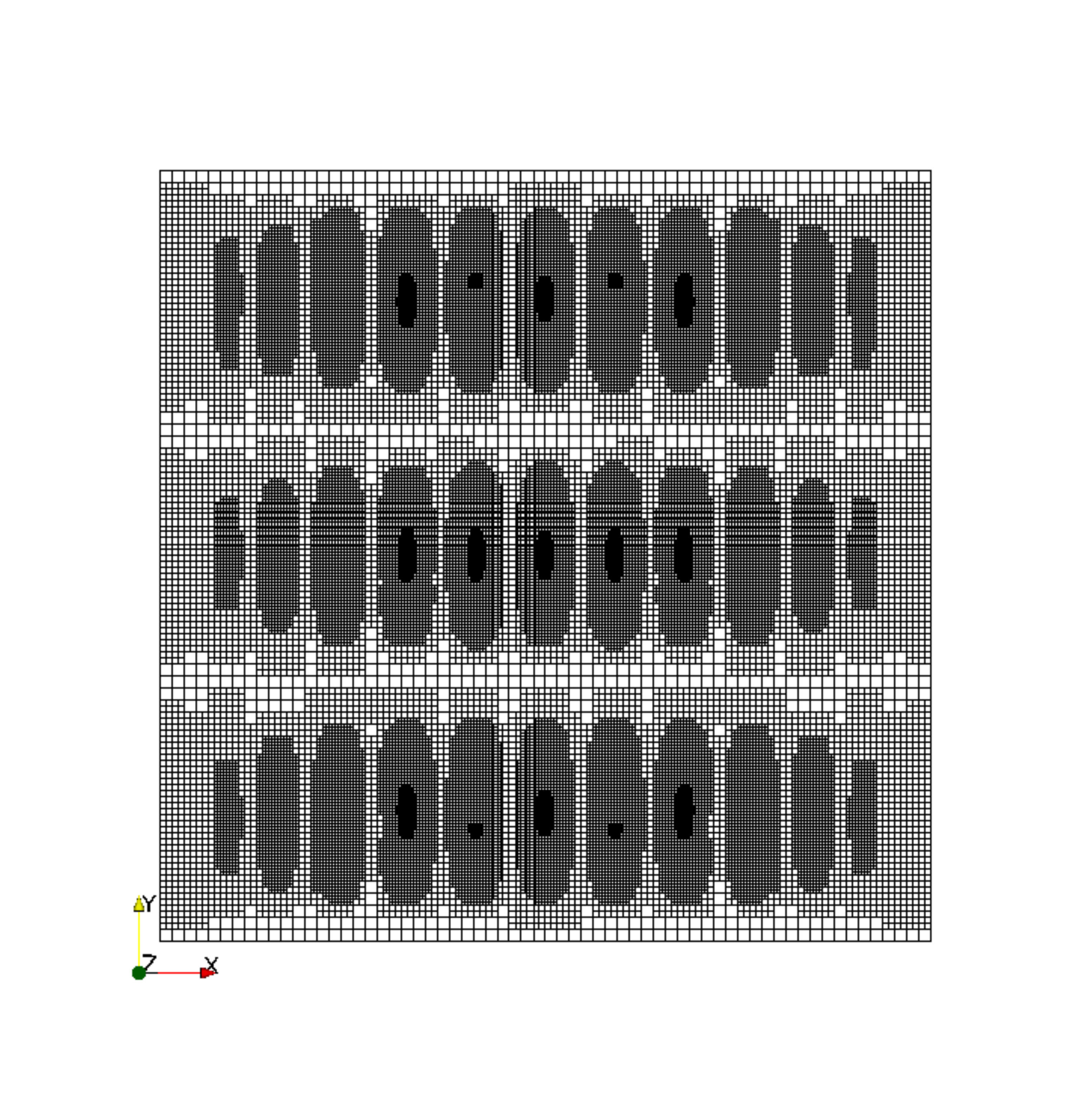}
	}
	}\!\!\!\!
	\\[-7pt]
	\subfloat[ref. 4]{
	 \LMRspacetimeaxis{\includegraphics[width=3.7cm, trim={3.7cm 3.7cm 3.7cm 3.7cm}, clip]{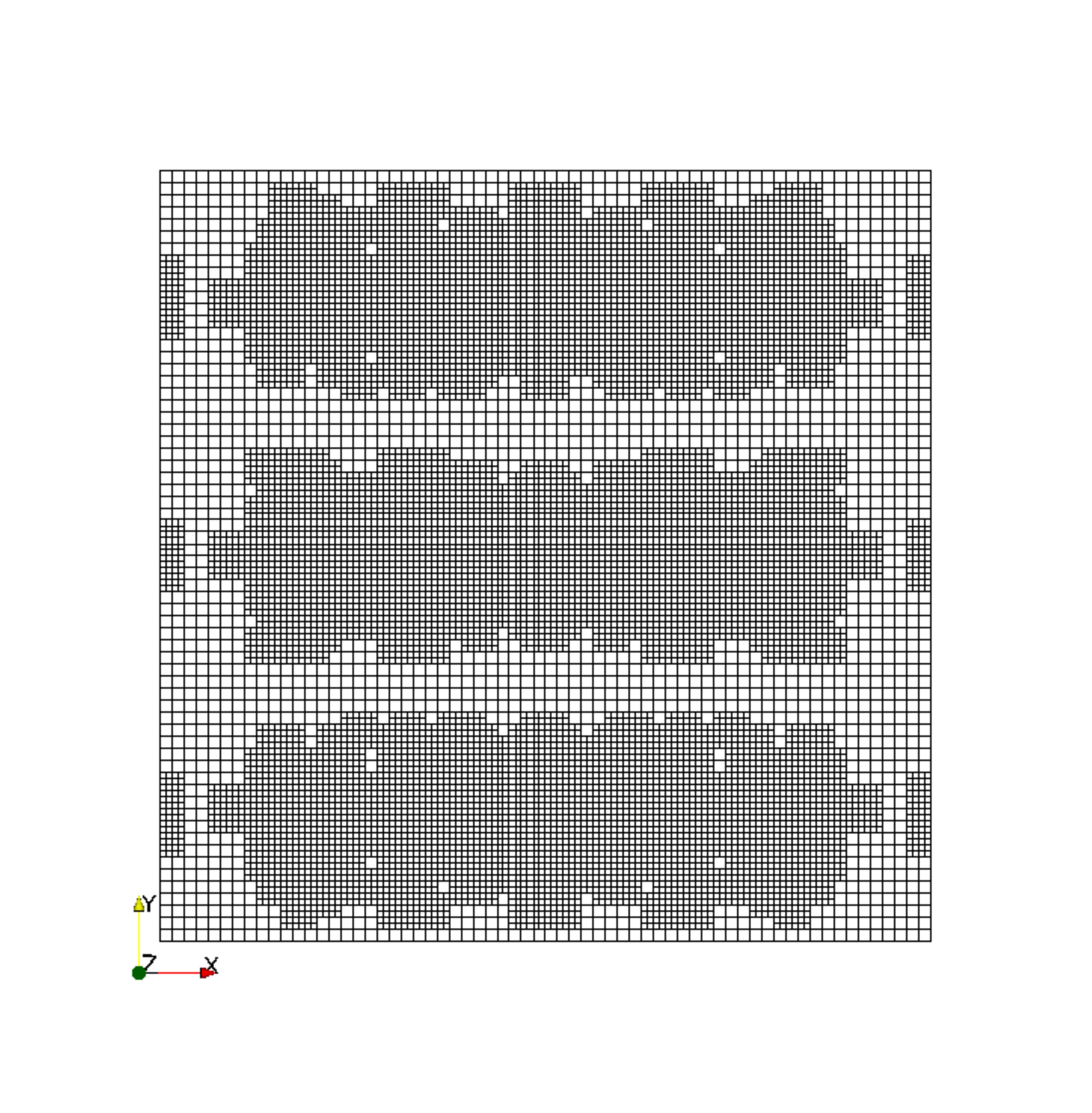}
	}
	}\!\!\!\!
	\subfloat[ref. 5]{
	 \LMRspacetimeaxis{\includegraphics[width=3.7cm, trim={3.7cm 3.7cm 3.7cm 3.7cm}, clip]{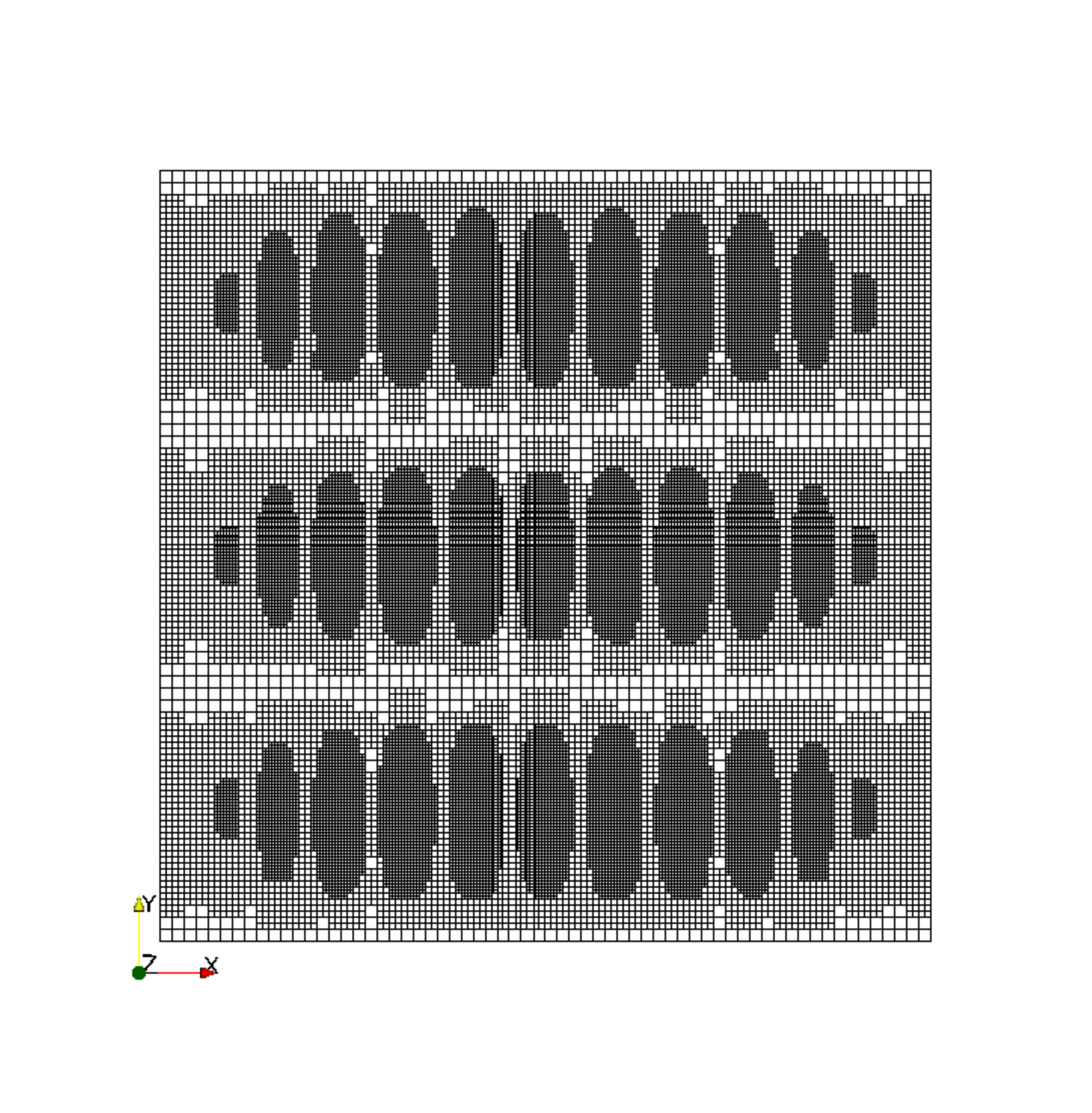}
	}
	}\!\!\!\!
	\subfloat[ref. 6]{
	 \LMRspacetimeaxis{\includegraphics[width=3.7cm, trim={3.7cm 3.7cm 3.7cm 3.7cm}, clip]{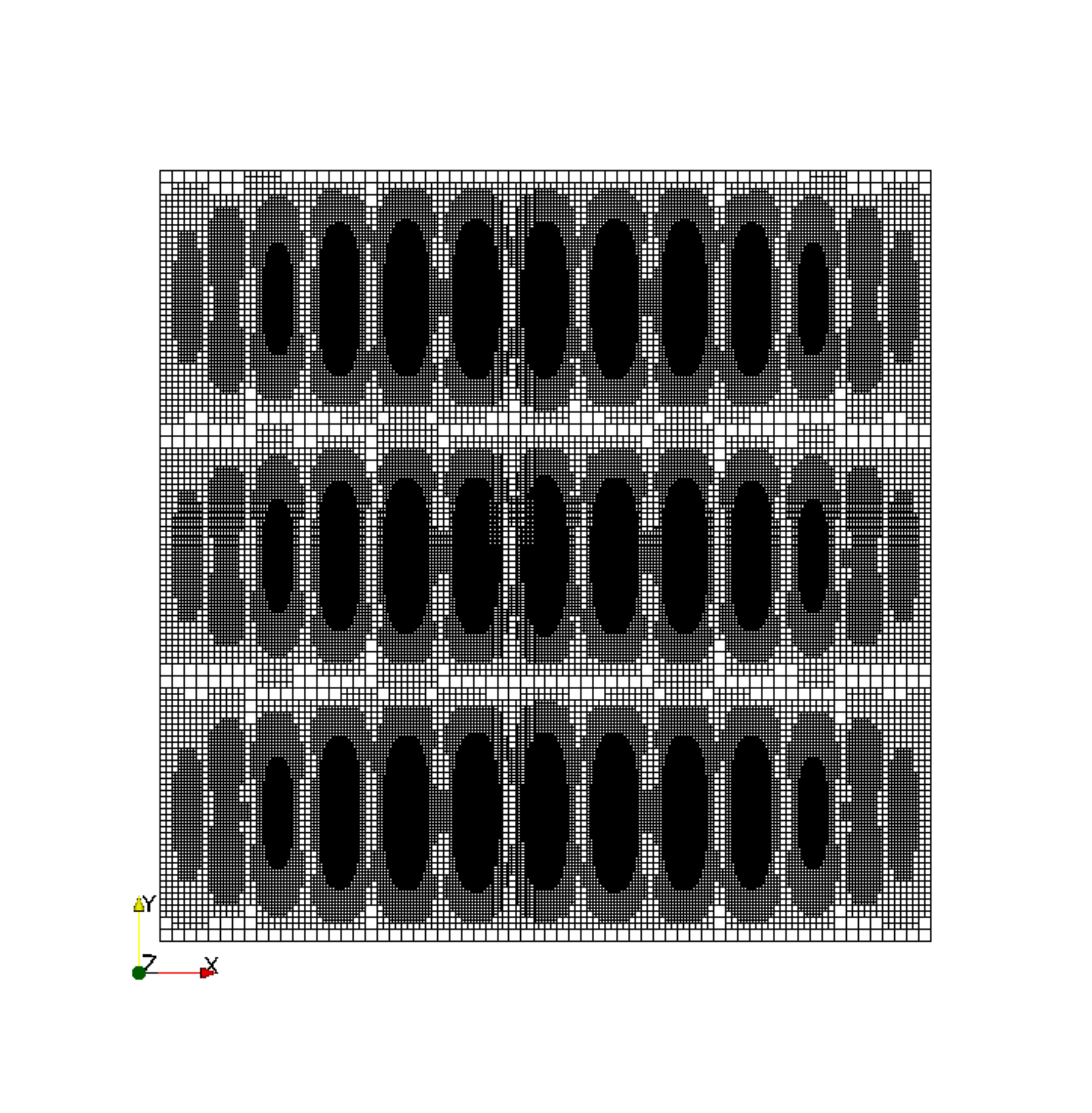}
	}
	}
	\caption{{\em Example 2-2}. 
	Meshes obtained by marking criteria $\mathds{M}_{\rm BULK}(0.6)$ (top row) and 
	$\mathds{M}_{\rm BULK}(\sigma =0.4)$ (bottom row) w.r.t. the refinement steps 4--6.}
\label{fig:unit-domain-example-4-mesh-ref}
\end{figure}

\subsection{Example 3: Gaussian distribution}

As the next test case, we consider the exact solution defined by a sharp local Gaussian distribution
%
%\begin{alignat*}{3}
$u(x, t) = (x^2 - x) \, (t^2 - t) \, e^{-100 \,|(x, t) - (0.8, 0.05)|}$,  
%& \quad  
$(x, t) \in \overline{Q} := [0, 1]^2,$
%\label{eq:example-8-exact-solution}
%\end{alignat*}
%
where the peak is located in the point $(x, t) = (0.8, 0.05)$. Then $f$ is computed by substituting $u$ 
into \eqref{eq:equation}. The Dirichlet boundary condition is obviously homogeneous.

 %---------------------------------------------------------------------------------------------------------------------------%
\begin{figure}[!t]
	\centering
	\includegraphics[scale=0.6]{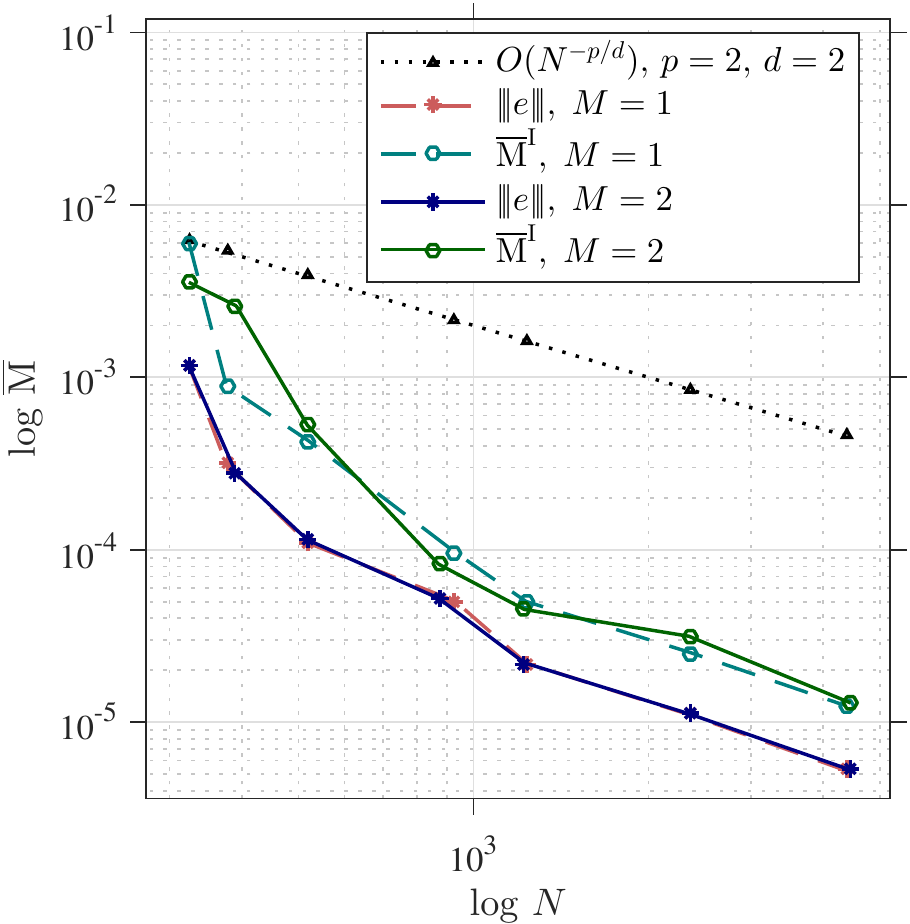}
	\caption{{\em Example 3}. The majorant and e.o.c. for $u_h \in S^{2}_{h}$ and 
	two different setting of auxiliary functions 
	(a) $\boldsymbol{y}_h \in \oplus^2 S^{3}_{h}$, and $w_h \in S^{3}_{h}$ and
	(b) $\boldsymbol{y}_h \in \oplus^2 S^{6}_{2h}$, and $w_h \in S^{6}_{2h}$.}
	\label{fig:example-6-eoc}
\end{figure}

For the discretisation spaces, we 
use the standard configuration, i.e., $u_h \in S^{2}_h$ for the approximate solution, as well as 
$\boldsymbol{y}_h \in \oplus^2 S^{3}_h$ and $w_h \in S^{3}_h$ for the auxiliary functions. 
We start with four initial global refinements ($N_{\rm ref, 0} = 4$), and continue with 
seven adaptive steps ($N_{\rm ref} = 7$). As the marking criteria, we choose 
$\mathds{M}_{\rm BULK}(0.6)$. The error 
order of convergence is illustrated in Figure \ref{fig:example-6-eoc}. It confirms that 
majorants reconstructed with \linebreak $\boldsymbol{y}_h \in \oplus^2 S^{6}_{2h}$, and $w_h \in S^{6}_{2h}$ 
are as efficient as the one reconstructed with $\boldsymbol{y}_h \in \oplus^2 S^{3}_{h}$, and 
$w_h \in S^{3}_{h}$. They also drastically improve the convergence order on the first refinement steps.

%---------------------------------------------------------------------------------------------------------------------------%
{
Numbers exposed in Table \ref{tab:unit-domain-example-6-error-majorant-adapt-ref} demonstrate  the 
efficiency of the majorants and the error identity in terms of error estimation and show that}
$\overline{\rm M}^{\rm I\!I}$ is twice sharper than $\overline{\rm M}^{\rm I}$, whereas 
the error identity, as expected, reflects the error $|\!|\!|  e |\!|\!|_{\mathcal{L}}$ exactly. 
In Table \ref{tab:unit-domain-example-6-time-expenses-adapt-ref}, we see that the assembling of 
matrices for the $\boldsymbol{y}_h$ and $w_h$ is 3 times more time-consuming in comparison to 
assembling the system for $u_h$. 

\begin{table}[!t]
\scriptsize
\centering
\newcolumntype{g}{>{\columncolor{gainsboro}}c} 	
\newcolumntype{k}{>{\columncolor{lightgray}}c} 	
\newcolumntype{s}{>{\columncolor{silver}}c} 
\newcolumntype{a}{>{\columncolor{ashgrey}}c}
\newcolumntype{b}{>{\columncolor{battleshipgrey}}c}
\begin{tabular}{c|cga|ck|cb|cc}
\parbox[c]{0.4cm}{\centering \# ref. } & 
\parbox[c]{1.2cm}{\centering  $\| \nabla_x e \|_Q$}   & 	  
%\parbox[c]{1.4cm}{\centering $\overline{\rm M}^{\rm I}$ } &    
\parbox[c]{1.0cm}{\centering $I_{\rm eff} (\overline{\rm M}^{\rm I})$ } & 
%\parbox[c]{1.4cm}{\centering $\overline{\rm M}^{\rm I\!I}$ } &    
\parbox[c]{1.0cm}{\centering $I_{\rm eff} (\overline{\rm M}^{\rm I\!I})$ } & 
\parbox[c]{1.2cm}{\centering  $|\!|\!|  e |\!|\!|_{l\!o\!c\!,h}$ }   & 	  
%\parbox[c]{1.4cm}{\centering $\overline{\rm M}^{\rm I}_{h}$ } &    
%\parbox[c]{1.4cm}{\centering $I_{\rm eff} (\overline{\rm M}^{\rm I}_{h})$ } & 
\parbox[c]{1.2cm}{\centering  $|\!|\!|  e |\!|\!|_{\mathcal{L}}$ }   & 	  
%\parbox[c]{1.4cm}{\centering ${{\rm E \!\!\! Id}}$  }&    
\parbox[c]{1.0cm}{\centering$I_{\rm eff} ({{\rm E \!\!\! Id}})$ } & 
\parbox[c]{1.2cm}{\centering e.o.c. ($|\!|\!|  e |\!|\!|_{l\!o\!c\!,h}$)} & 
\parbox[c]{1.2cm}{\centering e.o.c. ($|\!|\!|  e |\!|\!|_{\mathcal{L}}$)} \\[3pt]
\toprule
   2 &     3.1311e-04 &         2.85 &         1.55 &     3.1335e-04 & %        2.85 &     
   5.6510e-02 &         1.00 &    17.71 &     8.64 \\
   3 &     1.0915e-04 &         3.93 &         1.73 &     1.0944e-04 & %       3.92 &     
   3.1506e-02 &         1.00 &     6.49 &     3.60 \\
%   4 &     5.0490e-05 &         1.92 &         1.30 &     5.0497e-05 &         1.92 &     2.4544e-02 &         1.00 &     2.67 &     0.86 \\
   5 &     2.2033e-05 &         2.27 &         1.36 &     2.2042e-05 & %        2.27 &     
   1.4796e-02 &         1.00 &     5.87 &     3.59 \\
%   6 &     1.1030e-05 &         2.30 &         1.37 &     1.1041e-05 &         2.30 &     1.0709e-02 &         1.00 &     2.13 &     1.00 \\
   7 &     5.2517e-06 &         2.38 &         1.22 &     5.2526e-06 &  %       2.38 &     
   7.2473e-03 &         1.00 &     2.41 &     1.27 \\
\end{tabular}
\caption{{\em Example 3}. 
Efficiency of $\overline{\rm M}^{\rm I}$, $\overline{\rm M}^{\rm I\!I}$, 
$\overline{\rm M}^{\rm I}_{h}$, ${{\rm E \!\!\! Id}}$, and 
{ the order of convergence} of $|\!|\!|  e |\!|\!|_{l\!o\!c\!,h}$ and $|\!|\!|  e |\!|\!|_{\mathcal{L}}$
with marking criterion $\mathds{M}_{\rm BULK}(0.6)$ for 
$u_h \in S^{2}_{h}$, 
$\boldsymbol{y}_h \in \oplus^2 S^{3}_{h}$, and 
$w_h \in S^{3}_{h}$ ($N_{\rm ref, 0}$ = 4).}
\label{tab:unit-domain-example-6-error-majorant-adapt-ref}
\end{table}
   
\begin{table}[!t]
\scriptsize
\centering
\newcolumntype{g}{>{\columncolor{gainsboro}}c} 	
\begin{tabular}{c|ccc|cgg|cgg|c}%|cc|cc}
& \multicolumn{3}{c|}{ d.o.f. } 
& \multicolumn{3}{c|}{ $t_{\rm as}$ }
& \multicolumn{3}{c|}{ $t_{\rm sol}$ } 
& $\tfrac{t_{\rm appr.}}{t_{\rm er.est.}}$ \\
%& \multicolumn{4}{c}{ $t_{\rm e/w}$} \\
\midrule
\parbox[c]{0.2cm}{\# ref. } & 
\parbox[c]{0.4cm}{\centering $u_h$ } &  
\parbox[c]{0.4cm}{\centering $\boldsymbol{y}_h$ } &  
\parbox[c]{0.4cm}{\centering $w_h$ } & 
\parbox[c]{0.8cm}{\centering $u_h$ } & 
\parbox[c]{0.8cm}{\centering $\boldsymbol{y}_h$ } & 
\parbox[c]{0.8cm}{\centering $w_h$ } & 
\parbox[c]{0.8cm}{\centering $u_h$ } & 
\parbox[c]{0.8cm}{\centering $\boldsymbol{y}_h$ } & 
\parbox[c]{0.8cm}{\centering $w_h$ } & \\
%\bottomrule
%\multicolumn{10}{l}{ \rule{0pt}{3ex}   
%(a) $u_h \in S^{2}_{h}$, 
%$\boldsymbol{y}_h \in S^{3}_{h} \oplus S^{3}_{h}$, and 
%$w_h \in S^{3}_{h}$} \\[2pt]
\toprule
%  1 &        324 &        722 &        361 &   3.73e-01 &   7.51e-01 &   7.43e-01 &         3.38e-03 &         6.92e-03 &         8.55e-03 &             0.49 \\
%   2 &        376 &        818 &        409 &   4.00e-01 &   1.17e+00 &   1.06e+00 &         5.06e-03 &         4.60e-03 &         9.38e-03 &             0.18 \\
   3 &        520 &       1088 &        544 &   6.50e-01 &   2.39e+00 &   1.80e+00 &         1.09e-02 &         8.60e-03 &         1.91e-02 &             0.27 \\
%   4 &        929 &       1868 &        934 &   1.46e+00 &   4.84e+00 &   3.72e+00 &         3.93e-02 &         4.12e-02 &         5.14e-02 &             0.17 \\
   5 &       1232 &       2474 &       1237 &   1.72e+00 &   6.26e+00 &   4.71e+00 &         6.12e-02 &         8.25e-02 &         1.26e-01 &             0.28 \\
%   6 &       2357 &       4646 &       2323 &   3.52e+00 &   1.55e+01 &   1.21e+01 &         2.72e-01 &         2.27e-01 &         2.97e-01 &             0.13 \\
   7 &       4368 &       8492 &       4246 &   6.20e+00 &   3.04e+01 &   1.87e+01 &         6.38e-01 &         6.05e-01 &         5.26e-01 &             0.22\\
   \midrule
    &       &         &    &
    \multicolumn{3}{c|}{ $t_{\rm as} (u_h)$ \;:\; $t_{\rm as} (\boldsymbol{y}_h)$ \;:\; $t_{\rm as} (w_h)$ } &      
    \multicolumn{3}{c|}{\; $t_{\rm sol} (u_h)$ \;:\; $t_{\rm sol} (\boldsymbol{y}_h)$  \;:\;  $t_{\rm sol} (w_h)$\;} & \\%&    
 \midrule
	 & 	 & 	 & 	 &       0.33 &       1.62 &       1.00 &             1.21 &             1.15 &             1.00 &            \\
\end{tabular}
\caption{{\em Example 3}. 
Assembling and solving time (in seconds) spent for the systems generating d.o.f. of 
$u_h \in S^{2}_{h}$, $\boldsymbol{y}_h \in \oplus^2 S^{3}_{h}$, and $w_h \in S^{3}_{h}$ 
for the bulk marking parameter $\sigma =0.6$ for  ($N_{\rm ref, 0}$ = 4).}
\label{tab:unit-domain-example-6-time-expenses-adapt-ref}
\end{table}

\begin{figure}[!t]
	\subfloat[ref. 4]{
	 \LMRspacetimeaxis{
	\includegraphics[width=3.7cm, trim={3.7cm 3.7cm 3.7cm 3.7cm}, clip]{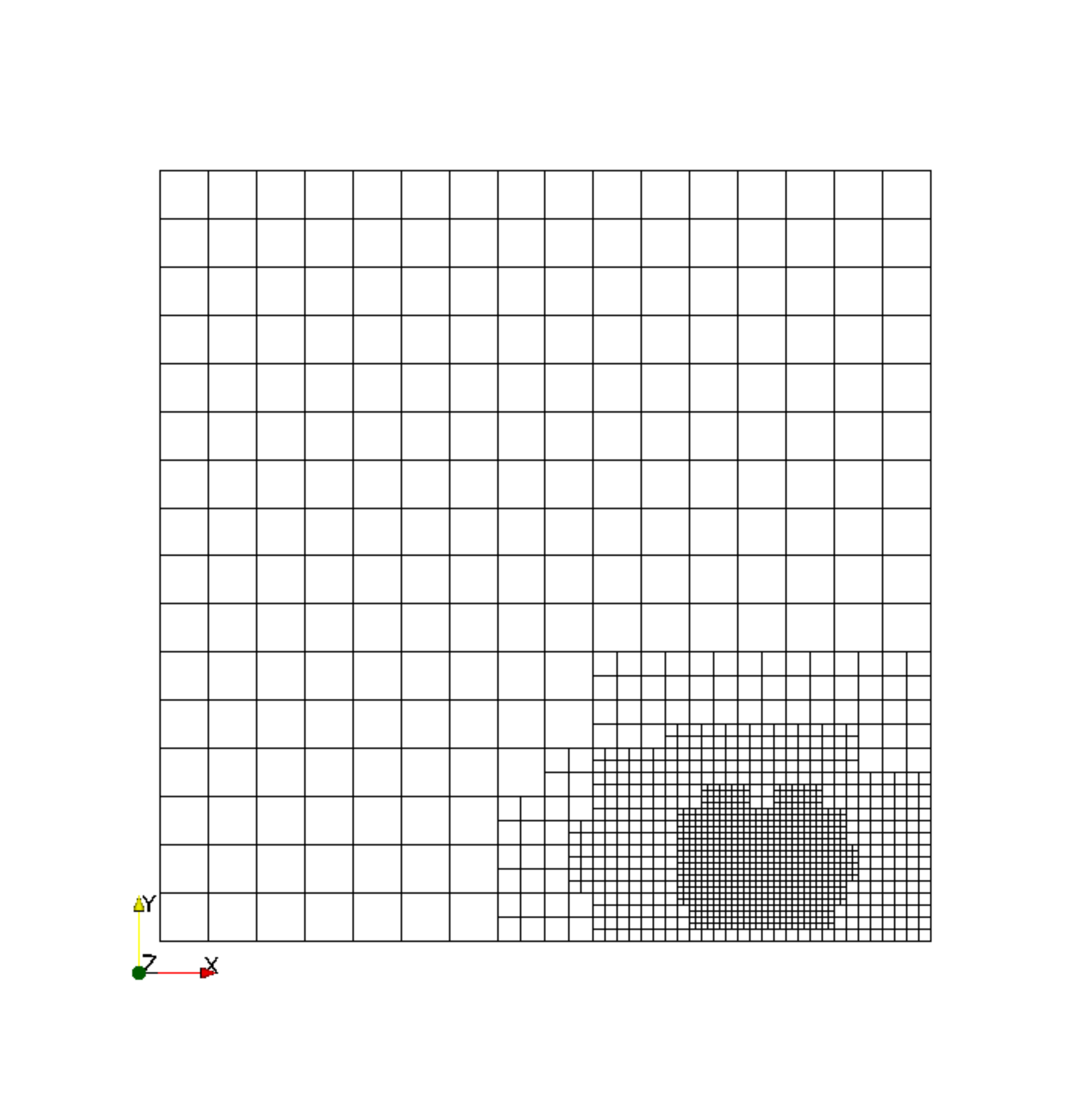}
	}
	}\!\!\!\!
	\subfloat[ref. 5]{
	 \LMRspacetimeaxis{
	\includegraphics[width=3.7cm, trim={3.7cm 3.7cm 3.7cm 3.7cm}, clip]{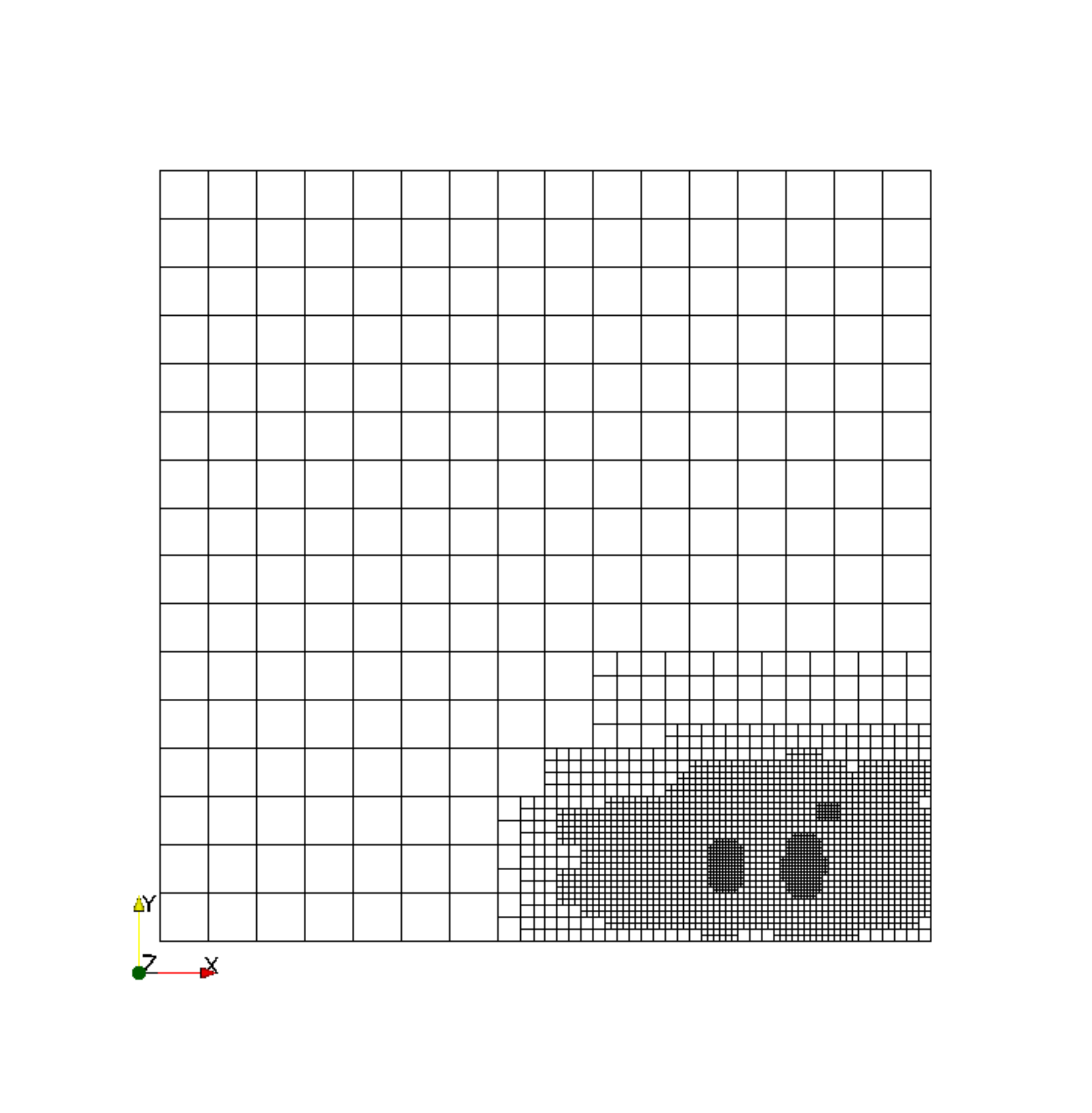}
	}
	}\!\!\!\!
	\subfloat[ref. 6]{
	 \LMRspacetimeaxis{
	\includegraphics[width=3.7cm, trim={3.7cm 3.7cm 3.7cm 3.7cm}, clip]{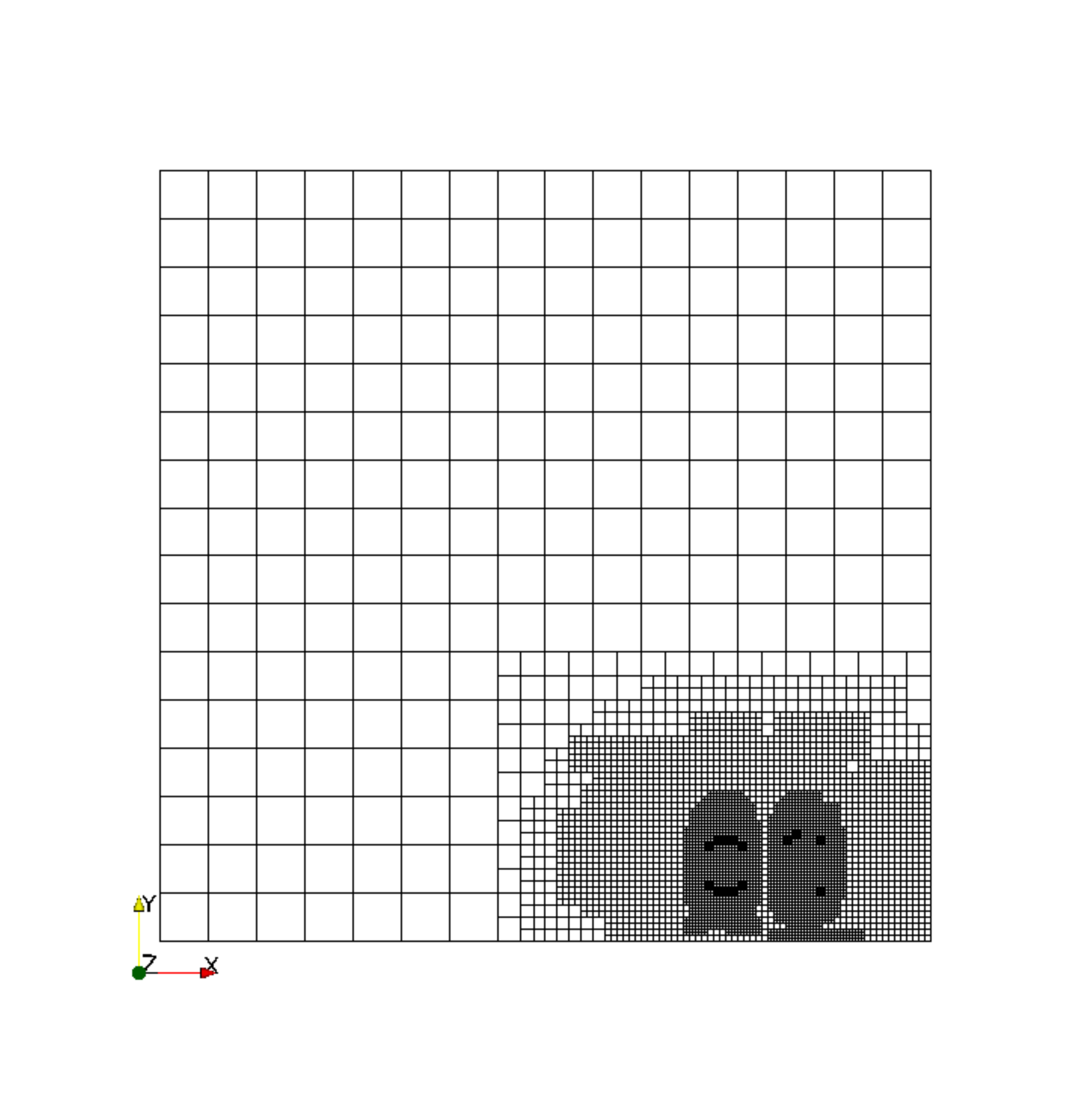}
	}
	}
	\caption{{\em Example 3}. Meshes obtained on the refinement steps 3--6, 
	$\sigma =0.6$ ($N_{\rm ref, 0}$ = 4) for $u_h \in S^{2}_{h}$, 
	$\boldsymbol{y}_h \in \oplus^2 S^{3}_{h}$, and $w_h \in S^{3}_{h}$.}
\end{figure}

\subsection{Example 4: solution with singularity w.r.t. $t$-coordinate}
%f
\label{ex:example-4-time-singularity}
\rm
%---------------------------------------------------------------------------------------------------------------------------%
{
For Example 4, we consider the solution with the singularity w.r.t. time} 
coordinate, i.e., we take
$$u(x, t) = \sin \pi x \, (1 - t)^\lambda, %\in H^{\ell_x, \ell_t}(Q), \quad \ell_x, \ell_t \geq 0, 
\quad (x, t) \in \overline{Q} = (0, 1) \times (0, 2),$$
where parameter $\lambda = \Big\{ \tfrac{3}{2}, 1, \frac{1}{2} \Big\}$ (see Figure 
\ref{fig:time-singularity-1d-t-example-6} with $u$ for different $\lambda$). The RHS $f(x, t)$ follows 
from the substitution of $u$ into \eqref{eq:equation}, and the Dirichlet boundary condition is defined as 
{$u_D = u$ on $\Sigma$}.%

\begin{figure}[!t]
	\centering
	\centering
	\subfloat[$\lambda = \tfrac{3}{2}$]{
	\includegraphics[scale=0.45]{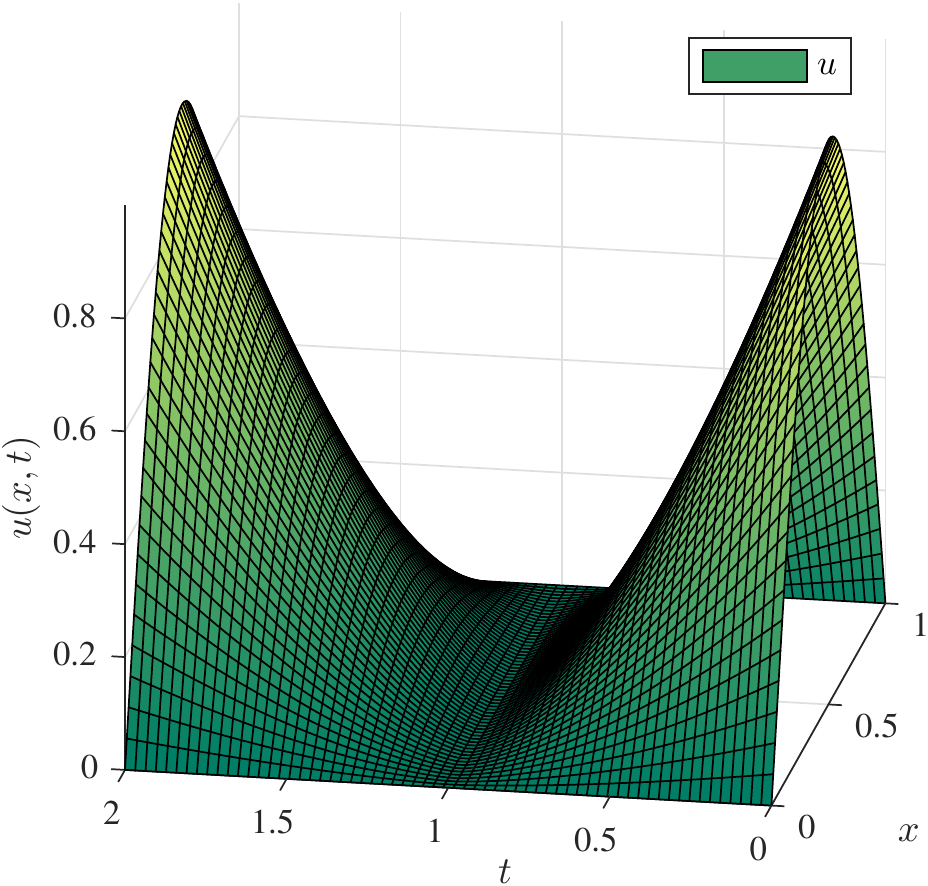}
	\label{fig:example-35-exact-solution}
	}
	\subfloat[$\lambda = 1$]{
	\includegraphics[scale=0.45]{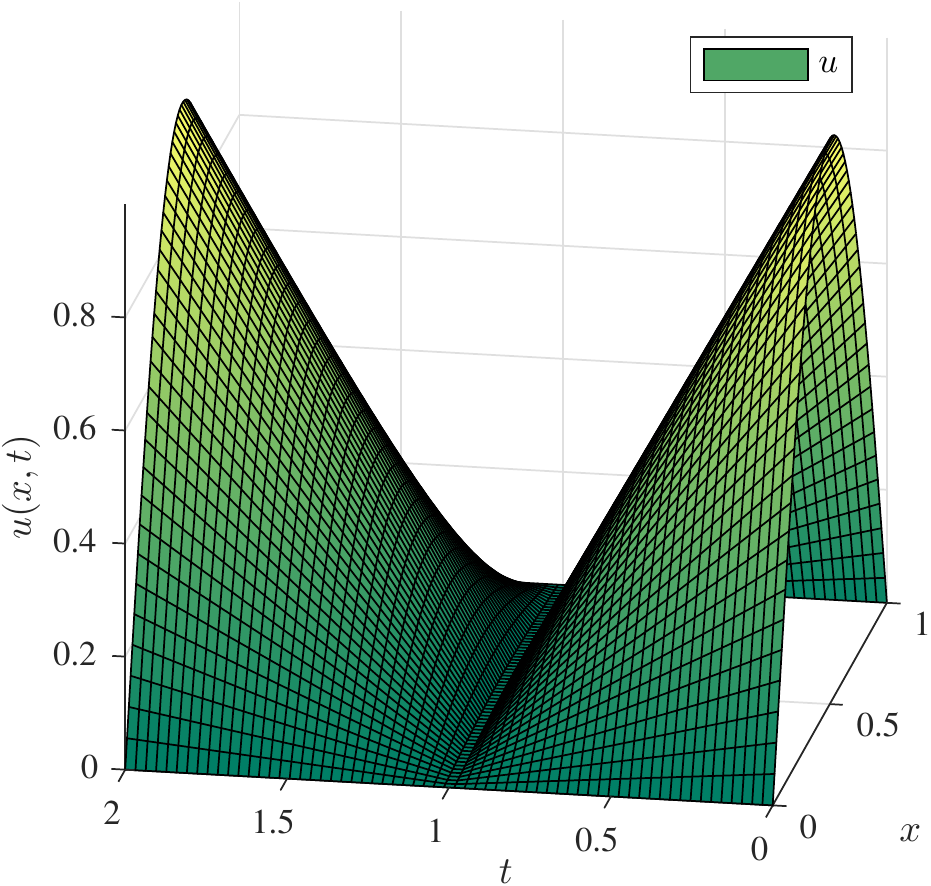}
	\label{fig:example-33-exact-solution}
	}	
	\subfloat[$\lambda = \tfrac{1}{2}$]{
	\includegraphics[scale=0.45]{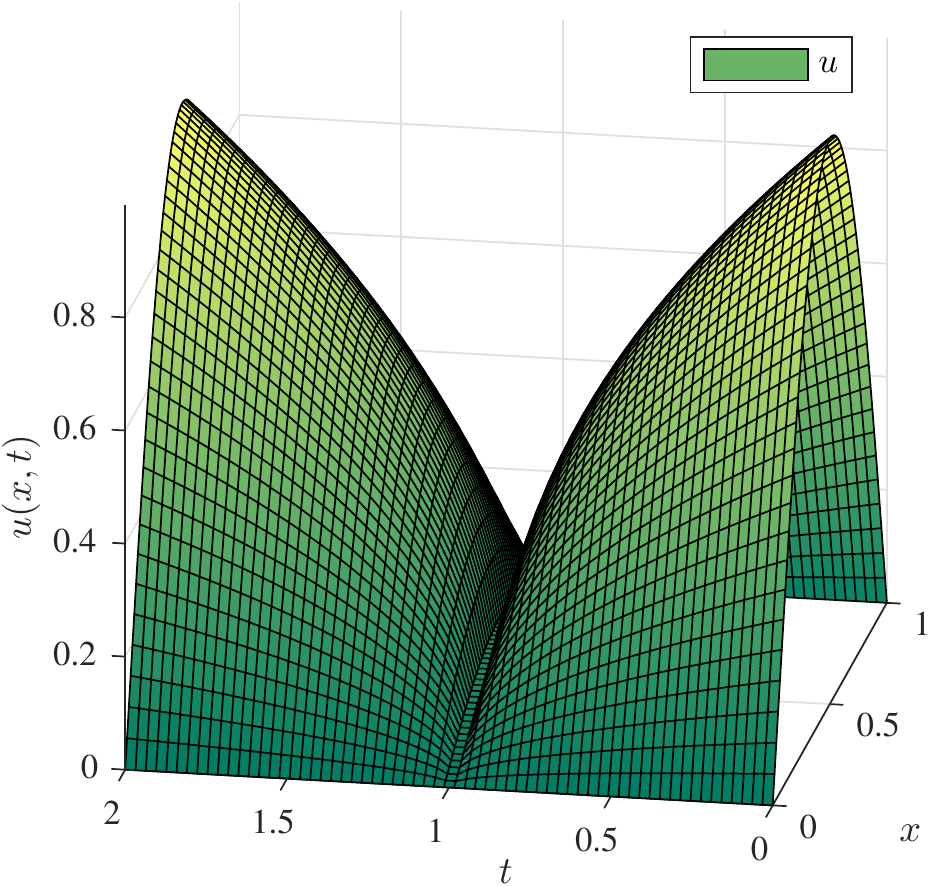}
	\label{fig:example-34-exact-solution}
	}	
	\caption{{\em Example 4}. 
	Exact solution $u(x, t) = \sin \pi x \, (1 - t)^{\lambda}$.}
	\label{fig:time-singularity-1d-t-example-6}
\end{figure}

\begin{figure}[!t]
	\centering
	\centering
	\subfloat[$\lambda = \tfrac{3}{2}$]{
	\includegraphics[scale=0.45]{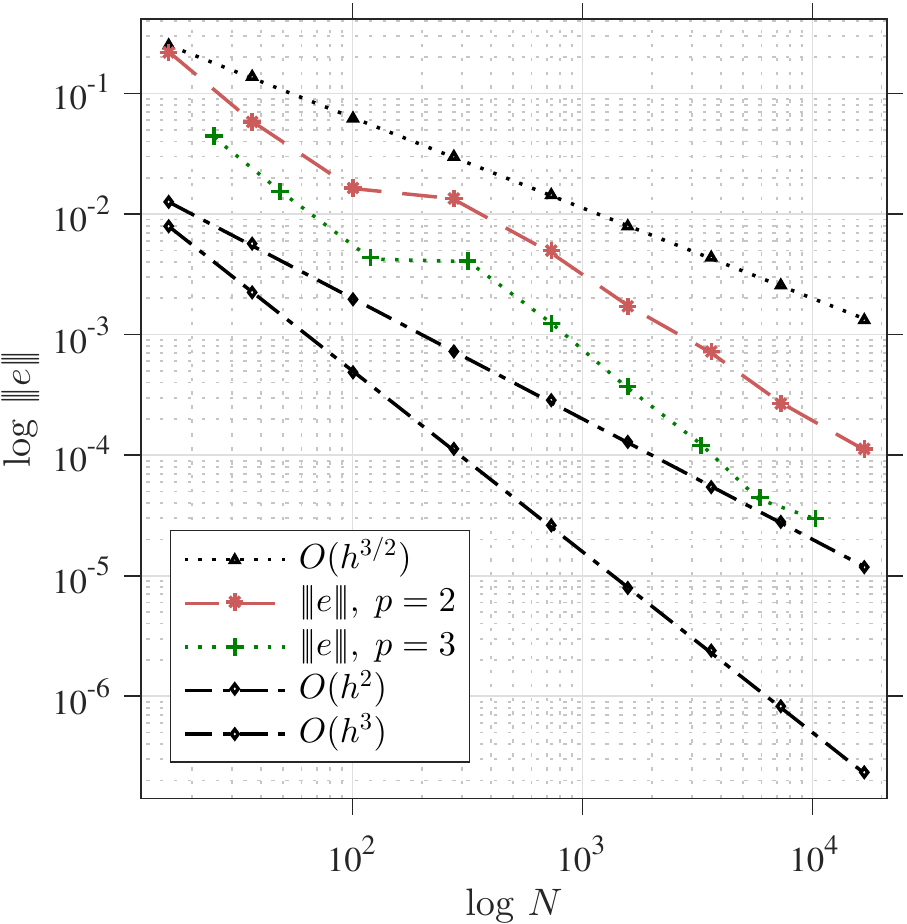}
	\label{fig:example-35-exact-solution-eoc}
	}
	\subfloat[$\lambda = 1$]{
	\includegraphics[scale=0.45]{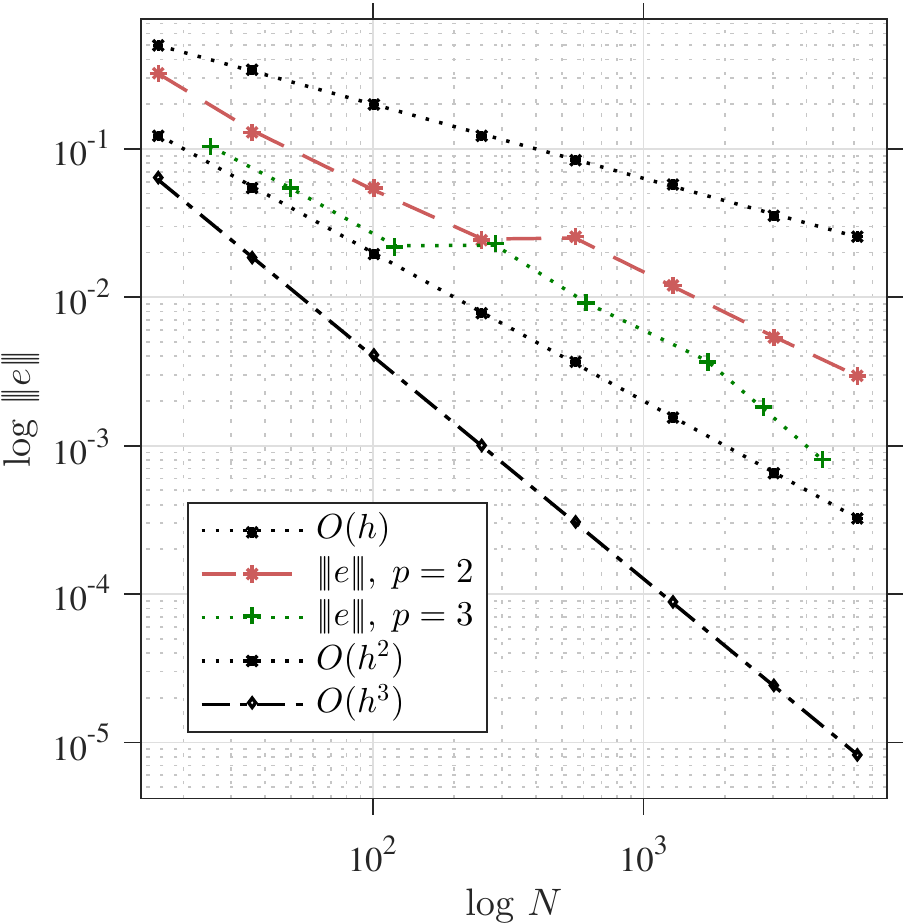}
	\label{fig:example-33-exact-solution-eoc}
	}	
	\subfloat[$\lambda = \tfrac{1}{2}$]{
	\includegraphics[scale=0.45]{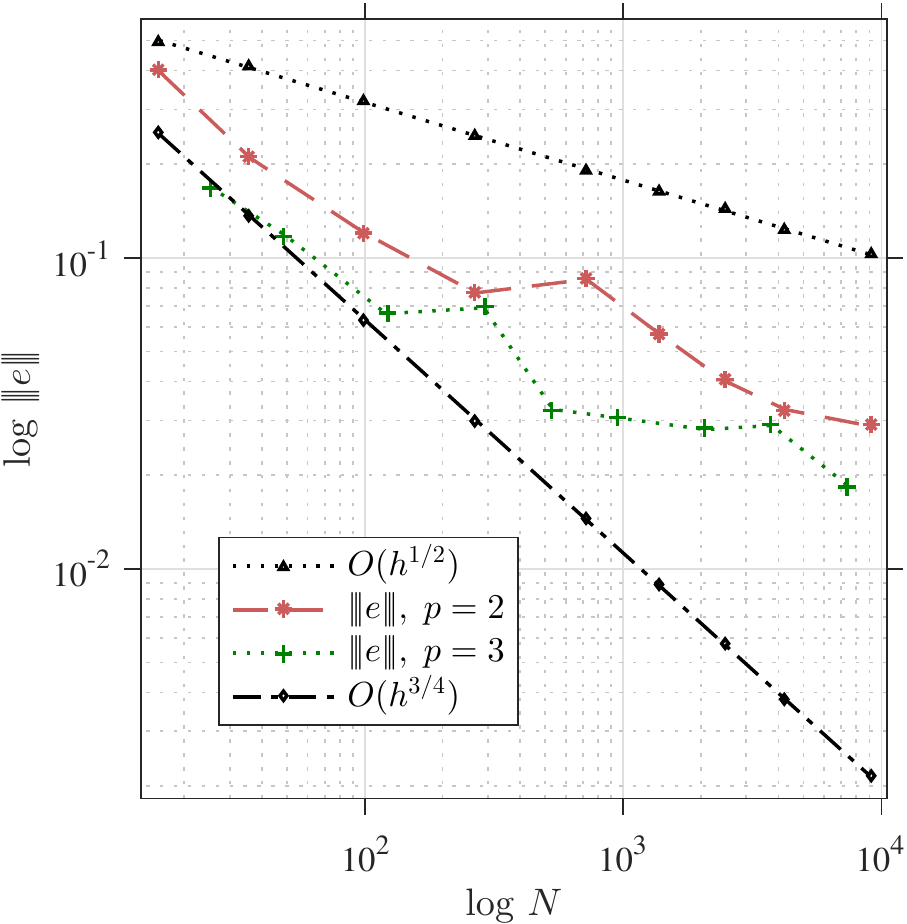}
	\label{fig:example-34-exact-solution-eoc}
	}	
	\caption{{\em Example 4}. { The error order of convergence} for approximations 
	with $u \in S_h^2$ and $S_h^3$:
	(a) $\lambda = \tfrac{3}{2}$,
	(b) $\lambda = 1$, 
	(c) $\lambda = \tfrac{1}{2}$.}
	\label{fig:time-singularity-1d-t-eoc-example-6}
\end{figure}

%---------------------------------------------------------------------------------------------------------------------------%
The solution $u(x, t)$ is smooth w.r.t. to spatial coordinates, while the regularity in time depends on the 
parameter $\lambda$. In particular, $p$ satisfies the following inequality 
$\lambda = p - \tfrac{1}{2} + \varepsilon$, where $\varepsilon >0$ is considerably small number. 
Then, the expected convergence in the term $h^{{1}\!/{2}} \, \| \partial_t (u - u_h) \|_{Q}$ is 
$O(h^{p - 1}) \cdot O(h^{{1}\!/{2}})$ (see \cite{LMRHoferLangerNeumuellerToulopoulos2017}).

Theoretical convergence for each $\lambda$ were tested in \cite{LMRLangerMatculevichRepinArxiv2017} (for $p = 2$). 
Table \ref{tab:example-4-time-singularity-error-majorant-adapt-ref} illustrates an improved 
{ error order of convergence} for $\lambda = \Big\{ \tfrac{1}{2}, 1, \tfrac{3}{2} \Big\}$. 
The same behaviour can be observed from Figure \ref{fig:time-singularity-1d-t-example-6} for different 
parameters. It presents 
meshes obtained on the adaptive refinement steps 5--7 and re-confirms that functional error estimates 
detect the local singularities rather efficiently. {For the case $\lambda = \tfrac{1}{2}$, 
we leave the class of solutions $V^{\Delta_x, 1}_{u_D} := \big\{u \in V^{\Delta_x, 1} :
\, u = u_D \; \mbox{on} \; {\Sigma} \big\}$, and, as consequence, 
are not able to recover the theoretical error order of convergence, since $\| \partial_t u\|_{L_2}$ explodes.
Nevertheless, the singularity at $t =1$ is captured and very well represented by the error indicator and 
resulting adaptive mesh.} Moreover, for the
rest of the times, i.e., $(0, 1) \cup (1, 2)$, where the solution is smooth, the mesh is not over-refined.

\begin{table}[!t]
\scriptsize
\centering
\newcolumntype{g}{>{\columncolor{gainsboro}}c} 	
\newcolumntype{k}{>{\columncolor{lightgray}}c} 	
\newcolumntype{s}{>{\columncolor{silver}}c} 
\newcolumntype{a}{>{\columncolor{ashgrey}}c}
\newcolumntype{b}{>{\columncolor{battleshipgrey}}c}
\begin{tabular}{c|cga|cc}
\parbox[c]{0.6cm}{\centering \# ref. } & 
\parbox[c]{1.2cm}{\centering  $\| \nabla_x e \|_Q$}   & 	  
\parbox[c]{1.0cm}{\centering $I_{\rm eff} (\overline{\rm M}^{\rm I})$ } & 
\parbox[c]{1.2cm}{\centering $I_{\rm eff} (\overline{\rm M}^{\rm I\!I})$ } & 
\parbox[c]{1.2cm}{\centering  $|\!|\!|  e |\!|\!|_{l\!o\!c\!,h}$ }   & 	  
\parbox[c]{0.8cm}{\centering e.o.c. ($|\!|\!|  e |\!|\!|_{\mathcal{L}}$)} \\[3pt]
\bottomrule
\multicolumn{6}{l}{ \rule{0pt}{3ex} 
(a) $\lambda = \tfrac{1}{2}$} \\[3pt]
\bottomrule
\multicolumn{6}{l}{ \rule{0pt}{3ex}   
\qquad 
$u_h \in S^{2}_{h}$, 
$\boldsymbol{y}_h \in \oplus^2 S^{4}_{h}$, and 
$w_h \in S^{4}_{h}$, 
\quad \mbox{theoretical e.o.c.} \;$O \Big(h^{{1}\!/{2}}\Big)$}\\[3pt]
\toprule   
 %   5 &     8.5273e-02 &         3.99 &         1.13 &     8.9195e-02 &         3.81 &     1.8400e+00 &         1.00 &    -0.05 &    -1.12 \\
   6 &     5.7560e-02 &         5.94 &         1.42 &     5.8807e-02 &   %      5.81 &     2.1549e+00 &         1.00 &     
   1.32 \\ %&    -0.50 \\
   7 &     4.0317e-02 &         8.66 &         1.70 &     4.0749e-02 &    %     8.57 &     2.6612e+00 &         1.00 &     
   1.23 \\ %&    -0.71 \\
   8 &     3.2498e-02 &        10.80 &         1.94 &     3.2703e-02 &     %   10.73 &    3.7174e+00 &         1.00 &     
   0.80 \\ %&    -1.22 \\
%   9 &     2.8775e-02 &        12.10 &         2.07 &     2.8850e-02 &        12.07 &     4.4013e+00 &         1.00 &     0.33 &    -0.45 \\
\bottomrule
\multicolumn{6}{l}{ \rule{0pt}{3ex}   
\qquad 
$u_h \in S^{3}_{h}$, 
$\boldsymbol{y}_h \in \oplus^2 S^{5}_{h}$, and 
$w_h \in S^{5}_{h}$, 
\quad \mbox{theoretical e.o.c.} \;$O \Big(h^{{1}\!/{2}}\Big)$ }\\[3pt]
\toprule   
%   2 &     1.1841e-01 &         2.34 &         0.60 &     1.7861e-01 &         1.55 &     8.7538e-01 &         0.98 &     1.57 &     0.81 \\
%   3 &     6.6121e-02 &         3.93 &         0.57 &     9.4687e-02 &         2.74 &     8.0095e-01 &         0.99 &     1.40 &     0.20 \\
%   4 &     6.9054e-02 &         4.23 &         0.90 &     8.3373e-02 &         3.50 &     9.6586e-01 &         1.00 &     0.29 &    -0.43 \\
%   5 &     3.2542e-02 &         8.77 &         1.70 &     3.7572e-02 &  %       7.59 &     1.0175e+00 &         1.00 &     
%   2.59 \\%&    -0.17 \\
   6 &     3.0549e-02 &         9.78 &         1.83 &     3.3397e-02 &   %      8.95 &     1.2970e+00 &         1.00 &     
   0.40 \\%&    -0.83 \\
   7 &     2.8038e-02 &        10.51 &         1.85 &     2.9693e-02 &   %      9.93 &     1.5974e+00 &         1.00 &     
   0.30 \\%&    -0.54 \\
%   8 &     2.8902e-02 &        10.53 &         1.81 &     2.9764e-02 &        10.23 &     1.8749e+00 &         1.00 &    -0.01 &    -0.54 \\
   9 &     1.8447e-02 &        15.74 &         2.17 &     1.8700e-02 &   %     15.52 &     1.9512e+00 &         1.00 &     
   1.37 \\% &    -0.12 \\
\bottomrule
\multicolumn{6}{l}{ \rule{0pt}{3ex} 
(b) $\lambda = 1$}\\[3pt]
\bottomrule
\multicolumn{6}{l}{ \rule{0pt}{3ex}   
\qquad $u_h \in S^{2}_{h}$, 
$\boldsymbol{y}_h \in \oplus^2 S^{3}_{h}$, and 
$w_h \in S^{3}_{h}$, 
\quad \mbox{theoretical e.o.c.} \;$O(h)$}\\[3pt]
\toprule
%   3 &     5.3090e-02 &         2.33 &         0.90 &     5.8948e-02 &         2.10 &     7.3445e-01 &         0.99 &     1.81 &     1.33 \\
%   4 &     2.4665e-02 &         3.78 &         1.28 &     2.6763e-02 &  %       3.49 &     5.3398e-01 &         1.00 &     
%   1.69 \\%&     0.68 \\
%   5 &     2.5048e-02 &         3.25 &         1.24 &     2.5833e-02 &  %       3.15 &     5.8286e-01 &         1.00 &     
%   0.09 \\ %&    -0.22 \\
   6 &     1.1955e-02 &         4.47 &         1.44 &     1.2104e-02 &   %      4.41 &     5.2342e-01 &         1.00 &     
   1.85 \\%&     0.26 \\
   7 &     5.3797e-03 &         6.84 &         1.70 &     5.4167e-03 &  %       6.79 &     3.8288e-01 &         1.00 &     
   1.83 \\ %&     0.71 \\
   8 &     2.9478e-03 &         9.37 &         2.01 &     2.9602e-03 &  %       9.33 &     3.3906e-01 &         1.00 &     
   1.73 \\ %&     0.35 \\
\bottomrule
\multicolumn{6}{l}{ 
\rule{0pt}{3ex}   
\qquad $u_h \in S^{3}_{h}$, 
$\boldsymbol{y}_h \in \oplus^2 S^{4}_{h}$, and 
$w_h \in S^{4}_{h}$, 
\quad \mbox{theoretical e.o.c.} \; $O\Big(h\Big)$}\\[3pt]
\toprule
%%
%   3 &     2.2227e-02 &         3.12 &         0.64 &     2.8098e-02 &         2.47 &     2.1163e-01 &         0.99 &     2.10 &     1.15 \\
%   4 &     2.2422e-02 &         3.07 &         0.73 &     2.6072e-02 &         2.64 &     2.3720e-01 &         0.99 &     0.18 &    -0.27 \\
%   5 &     9.0688e-03 &         5.95 &         3.07 &     9.8889e-03 &  %        5.45 &     1.9111e-01 &         1.00 &     
 %  2.46 \\ %&     0.55 \\
   6 &     3.7397e-03 &         7.95 &         1.58 &     3.9142e-03 & %        7.59 &     1.3951e-01 &         1.00 &     
   1.83 \\ %&     0.62 \\
   7 &     1.8031e-03 &        11.66 &         2.49 &     1.8454e-03 & %       11.39 &     1.1765e-01 &         1.00 &    
   3.20 \\%&     0.73 \\
   8 &     7.9328e-04 &        20.28 &         3.28 &     8.1331e-04 & %       19.78 &     1.0278e-01 &         1.00 &    
   3.14 \\%&     0.52 \\
\bottomrule
\multicolumn{6}{l}{ \rule{0pt}{3ex} 
(c) $\lambda = \tfrac{3}{2}$}\\[3pt]
\bottomrule
\multicolumn{6}{l}{ \rule{0pt}{3ex}   
\qquad $u_h \in S^{2}_{h}$, 
$\boldsymbol{y}_h \in \oplus^2 S^{4}_{h}$, and 
$w_h \in S^{4}_{h}$, 
\quad \mbox{theoretical e.o.c.} \;$O\Big(h^{{3}\!/_{2}}\Big)$}\\[3pt]
\toprule
 %  5 &     4.8312e-03 &         2.87 &         1.02 &     5.0048e-03 &  %       2.77 &     2.3544e-01 &         1.00 &     
 %  2.17 \\ %&     0.50 \\
   6 &     1.7201e-03 &         3.89 &         1.28 &     1.7489e-03 &   %      3.83 &     1.5378e-01 &         1.00 &     
   2.66 \\ %&     1.08 \\
   7 &     7.1799e-04 &         4.58 &         1.51 &     7.2230e-04 &   %      4.55 &     1.2457e-01 &         1.00 &     
   2.18 \\% &     0.52 \\
   8 &     2.7180e-04 &         6.44 &         1.70 &     2.7294e-04 &   %      6.42 &     7.5397e-02 &         1.00 &     
   2.73 \\%&     1.41 \\
   9 &     1.1070e-04 &         8.80 &         1.90 &     1.1088e-04 &   %      8.79 &     5.0997e-02 &         1.00 &     
   2.16 \\%&     0.94 \\
\bottomrule
\multicolumn{6}{l}{ \rule{0pt}{3ex}   
\qquad 
$u_h \in S^{3}_{h}$, 
$\boldsymbol{y}_h \in \oplus^2 S^{5}_{h}$, and 
$w_h \in S^{5}_{h}$, 
\quad \mbox{theoretical e.o.c.} \;$O \Big(h^{{3}\!/_{2}}\Big)$ }\\[3pt]
\toprule 
%   5 &     1.2374e-03 &         4.45 &         1.41 &     1.3120e-03 & %         4.20 &     2.5598e-02 &         1.00 &     
 %  2.96 \\%&     1.12 \\
   6 &     3.6941e-04 &         7.48 &         5.84 &     3.8382e-04 & %        7.20 &     1.5863e-02 &         1.00 &     
   3.30 \\%&     1.29 \\
   7 &     1.2426e-04 &         8.76 &         2.09 &     1.2580e-04 &  %       8.65 &     8.3524e-03 &         1.00 &     
   3.00 \\%&     1.72 \\
   8 &     4.3053e-05 &        14.49 &         3.80 &     4.3483e-05 & %       14.35 &     5.3014e-03 &         1.00 &     
   3.65 \\%&     1.56 \\
   9 &     2.9692e-05 &        12.48 &         3.14 &     2.9790e-05 &  %      12.44 &     4.7494e-03 &         1.00 &     
   1.35 \\%&     0.39 \\
\end{tabular}
\caption{{\em Example 4}. 
Efficiency of $\overline{\rm M}^{\rm I}$, $\overline{\rm M}^{\rm I\!I}$, ${{\rm E \!\!\! Id}}$, and 
{ order of convergence} of $|\!|\!|  e |\!|\!|_{l\!o\!c\!,h}$ and $|\!|\!|  e |\!|\!|_{\mathcal{L}}$ for 
$\sigma =0.4$ ($N_{\rm ref, 0}$ = 1).}
\label{tab:example-4-time-singularity-error-majorant-adapt-ref}
\end{table}

\begin{figure}[!t]
	\centering
	\captionsetup[subfigure]{oneside}
	\subfloat[ref. 5]{
	\includegraphics[width=4cm, trim={9cm 0.5cm 9cm 0.5cm}, clip]{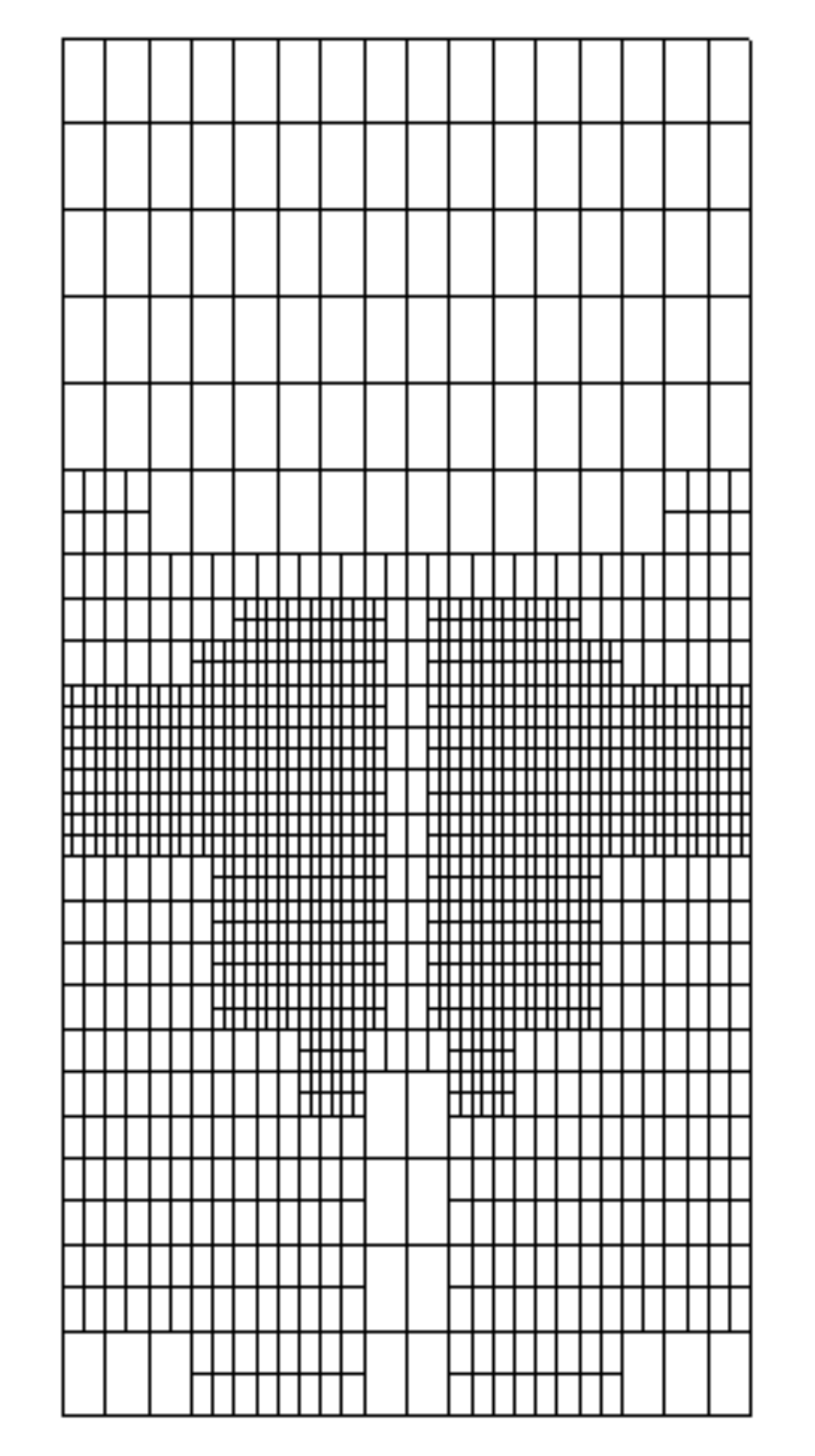}
	}
	\subfloat[ref. 6]{
	\includegraphics[width=4cm, trim={9cm 0.5cm 9cm 0.5cm}, clip]{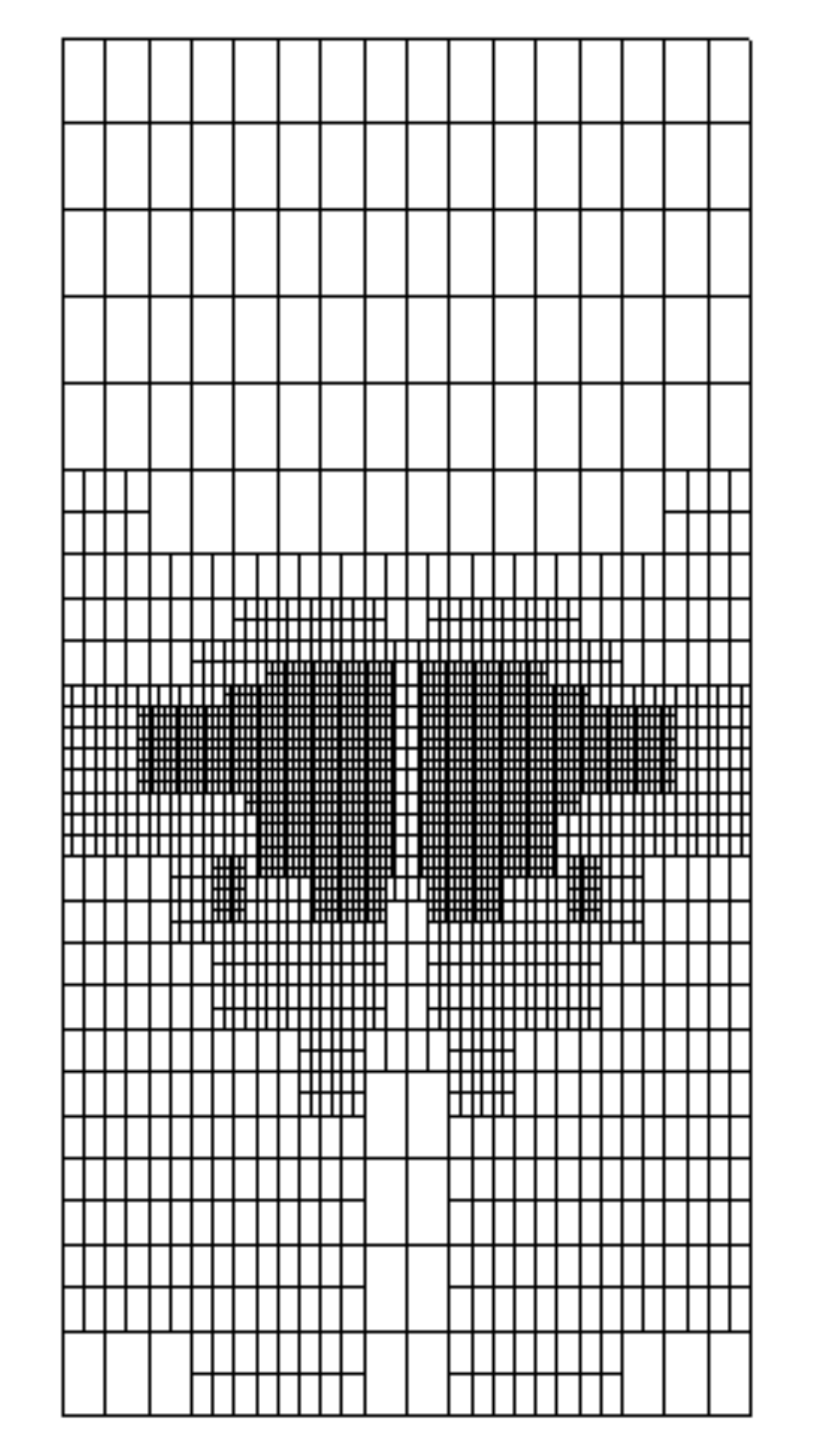}
	}
	\subfloat[ref. 7]{
	\includegraphics[width=4cm, trim={9cm 0.5cm 9cm 0.5cm}, clip]{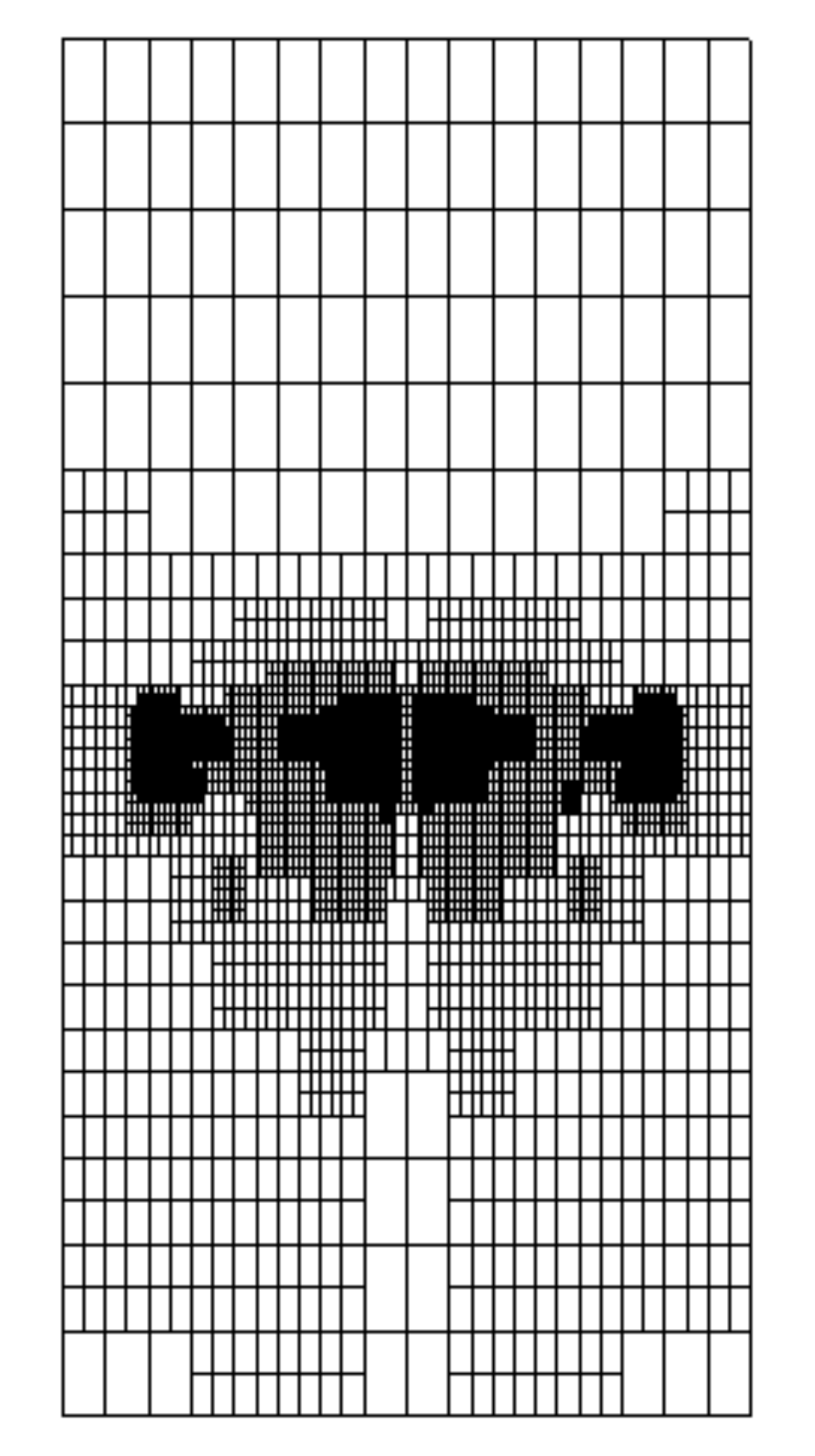}
	} 
	\caption{{\em Example 3 (case (a): $\lambda = \tfrac{1}{2}$)}. 
	Meshes obtained on the refinement steps 5--7 for 
	$u_h \in S^{2}_{h}$, $\boldsymbol{y}_h \in \oplus^2 S^{3}_{h}$, and $w_h \in S^{3}_{h}$.
}
\label{fig:unit-domain-example-34-ed-md-distribution}
\end{figure}

\begin{figure}[!t]
	\centering
	\captionsetup[subfigure]{oneside}
	\subfloat[ref. 5]{
	\includegraphics[width=4cm, trim={9cm 0.5cm 9cm 0.5cm}, clip]{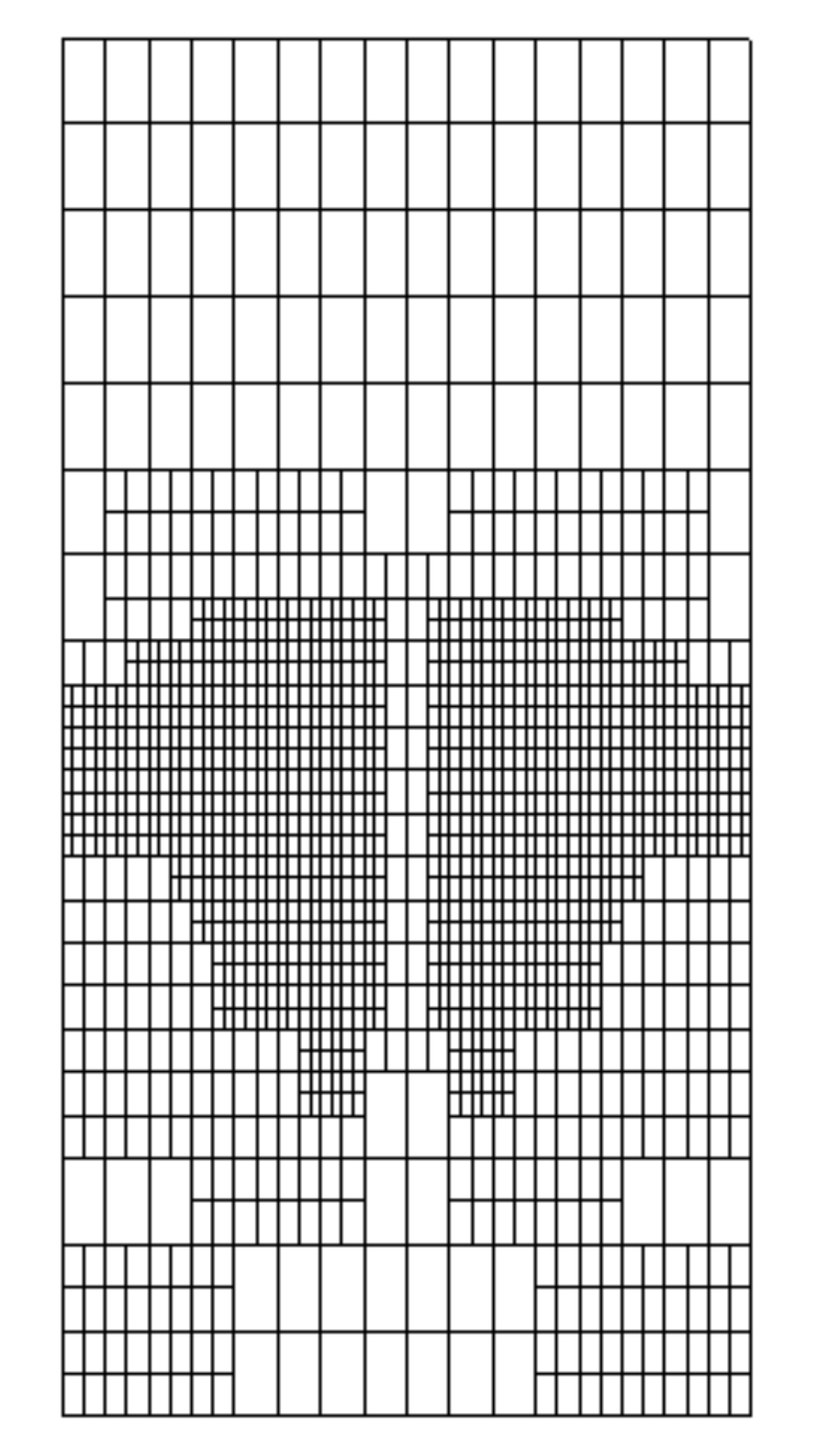}
	}
	\subfloat[ref. 6]{
	\includegraphics[width=4cm, trim={9cm 0.5cm 9cm 0.5cm}, clip]{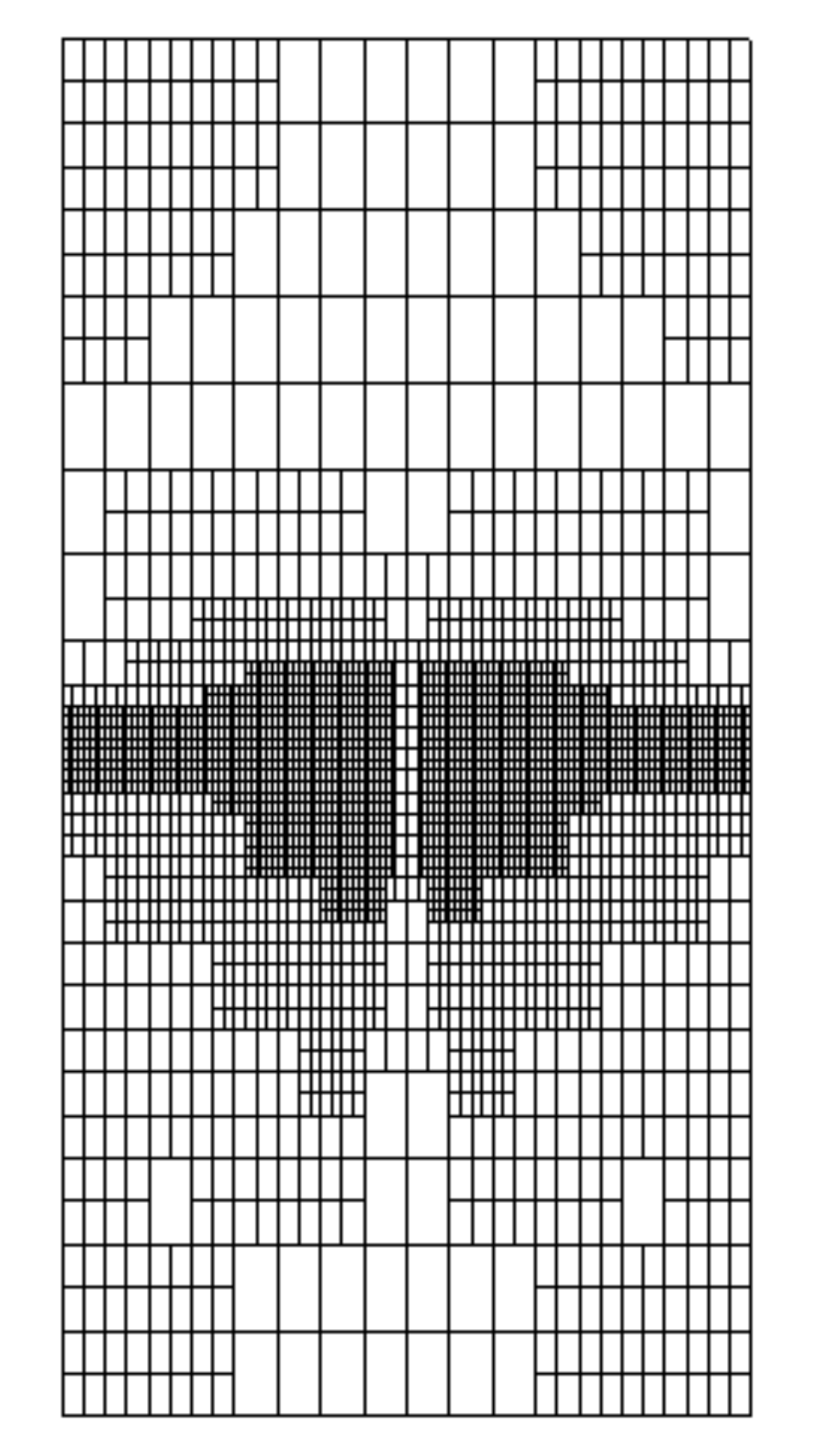}
	}
	\subfloat[ref. 7]{
	\includegraphics[width=4cm, trim={9cm 0.5cm 9cm 0.5cm}, clip]{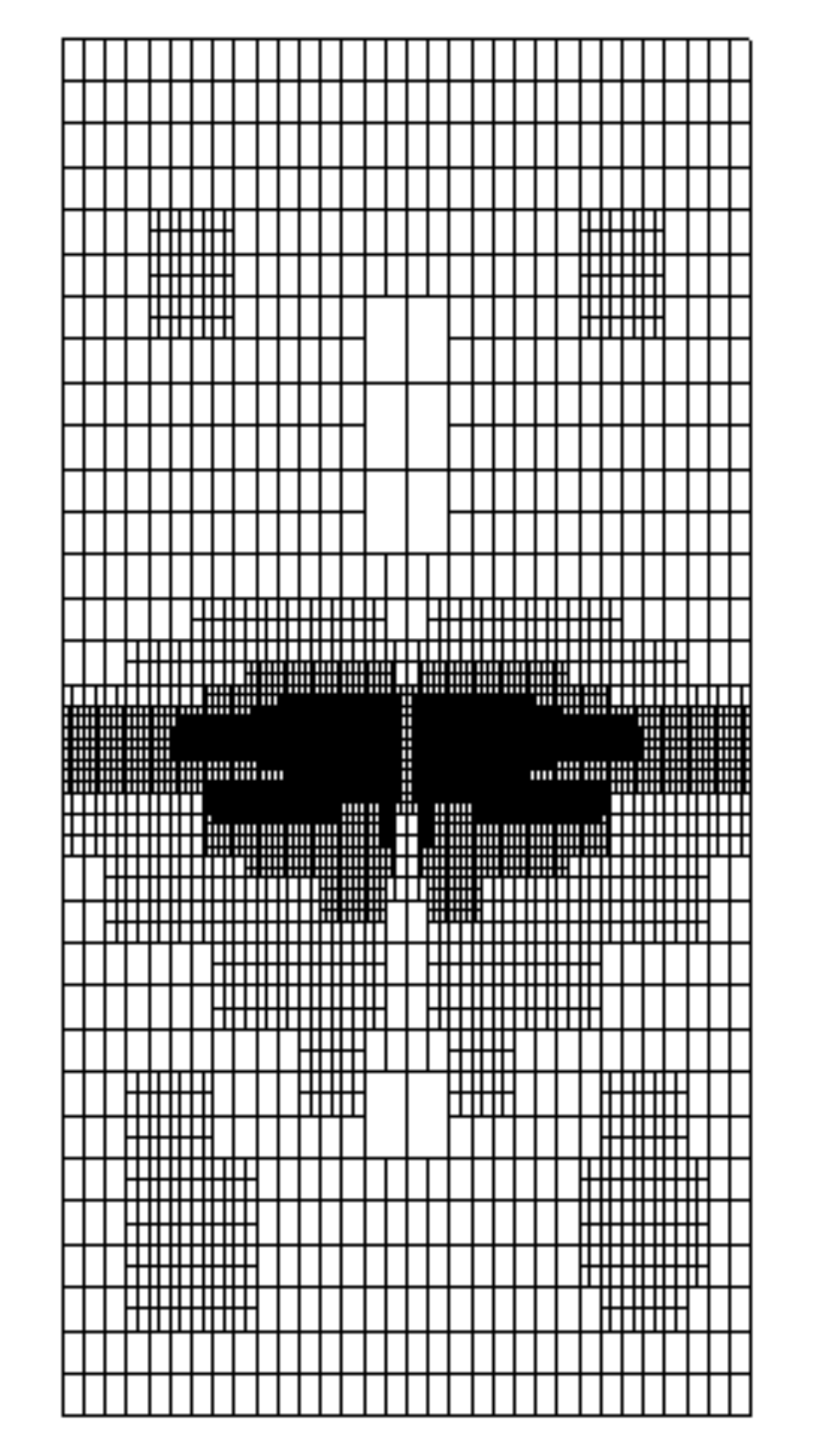}
	} 
	\caption{{\em Example 3 (case (b): $\lambda = 1$)}. 
	Meshes obtained on the refinement steps 5--7 for 
	$u_h \in S^{2}_{h}$, $\boldsymbol{y}_h \in \oplus^2 S^{3}_{h}$, and $w_h \in S^{3}_{h}$.
	}
\label{fig:unit-domain-example-33-ed-md-distribution}
\end{figure}

\begin{figure}[!t]
	\centering
	\captionsetup[subfigure]{oneside}
	\subfloat[ref. 5]{
	\includegraphics[width=4cm, trim={9cm 0.5cm 9cm 0.5cm}, clip]{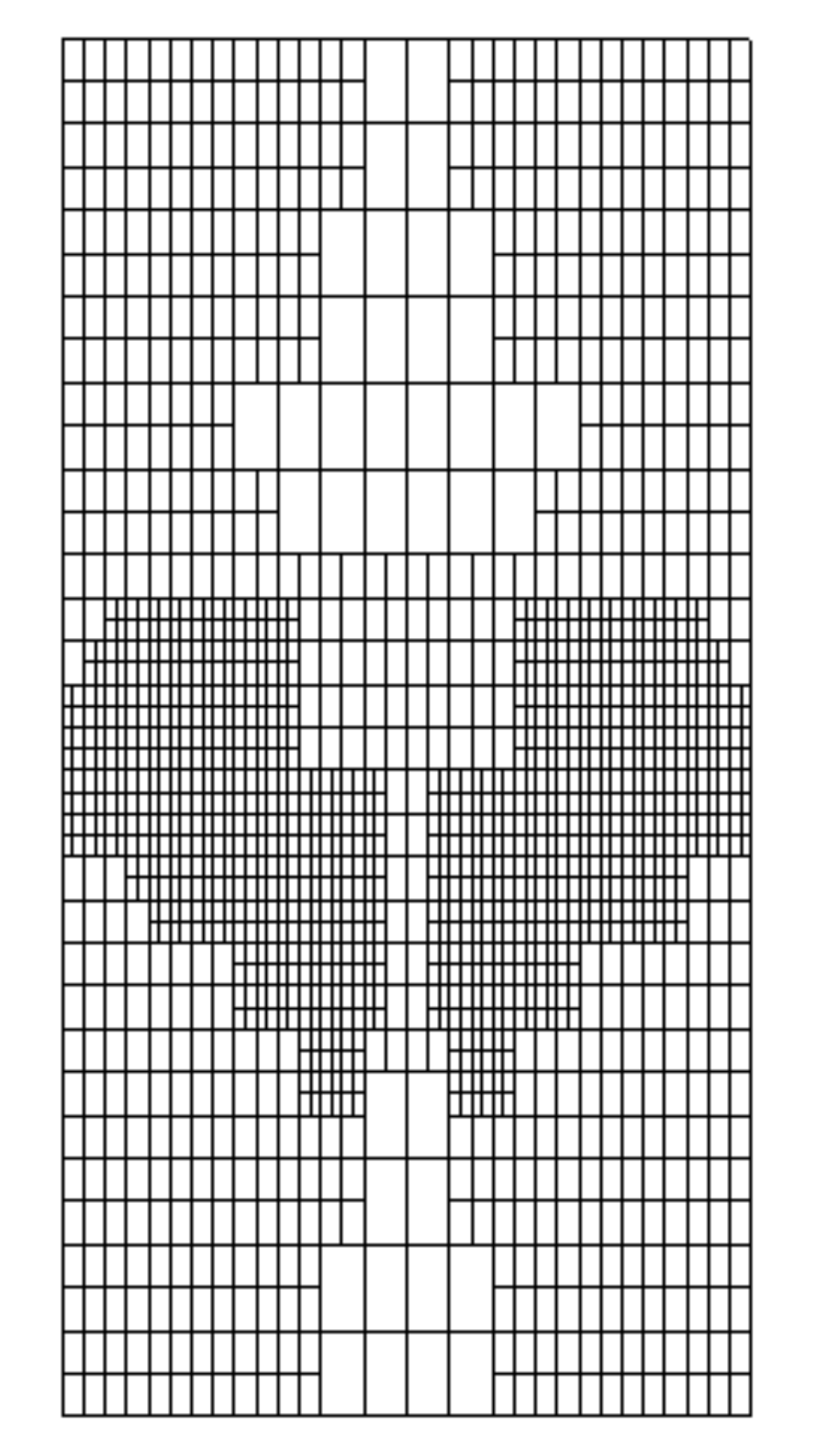}
	}
	\subfloat[ref. 6]{
	\includegraphics[width=4cm, trim={9cm 0.5cm 9cm 0.5cm}, clip]{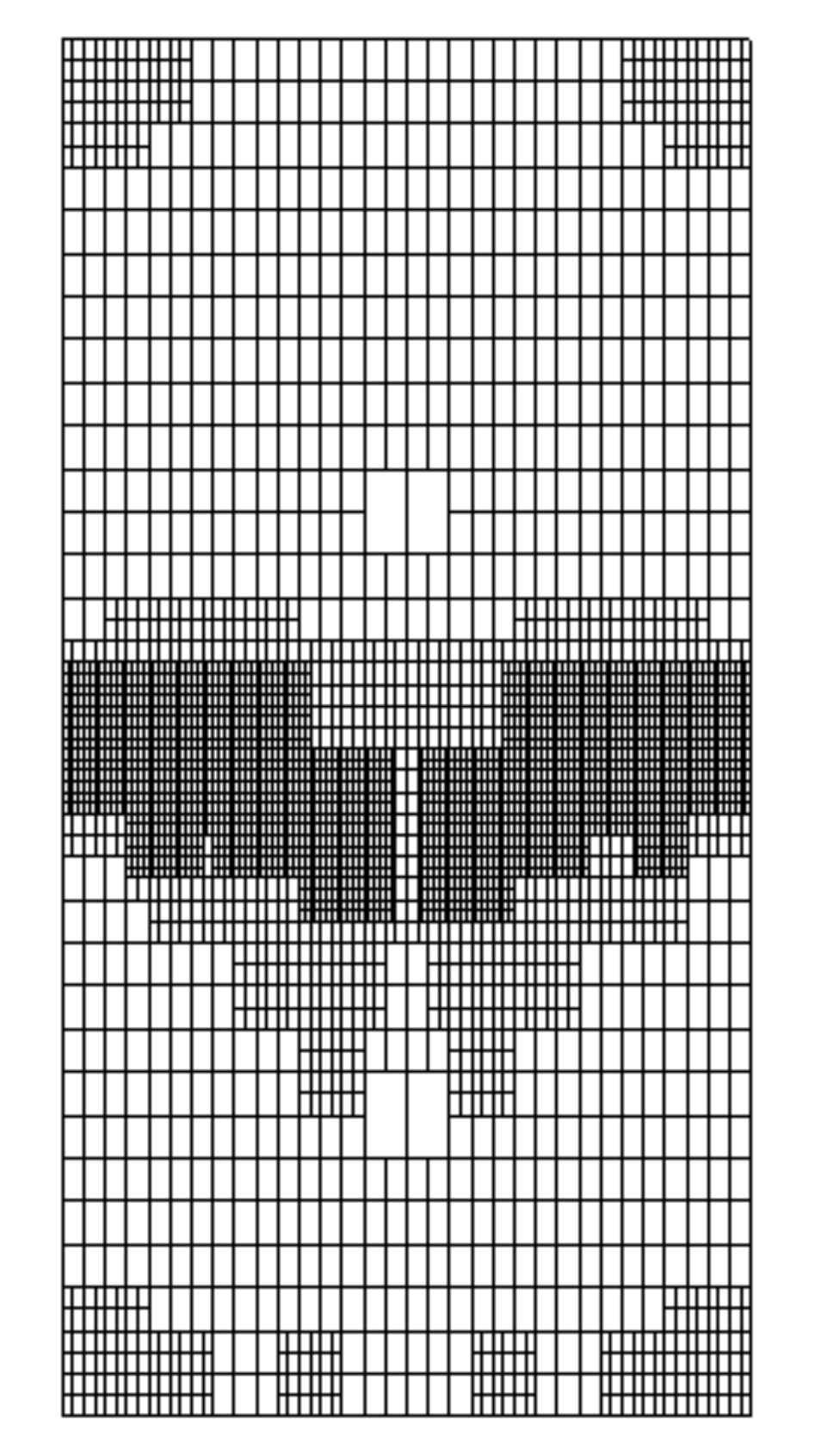}
	}
	\subfloat[ref. 7]{
	\includegraphics[width=4cm, trim={9cm 0.5cm 9cm 0.5cm}, clip]{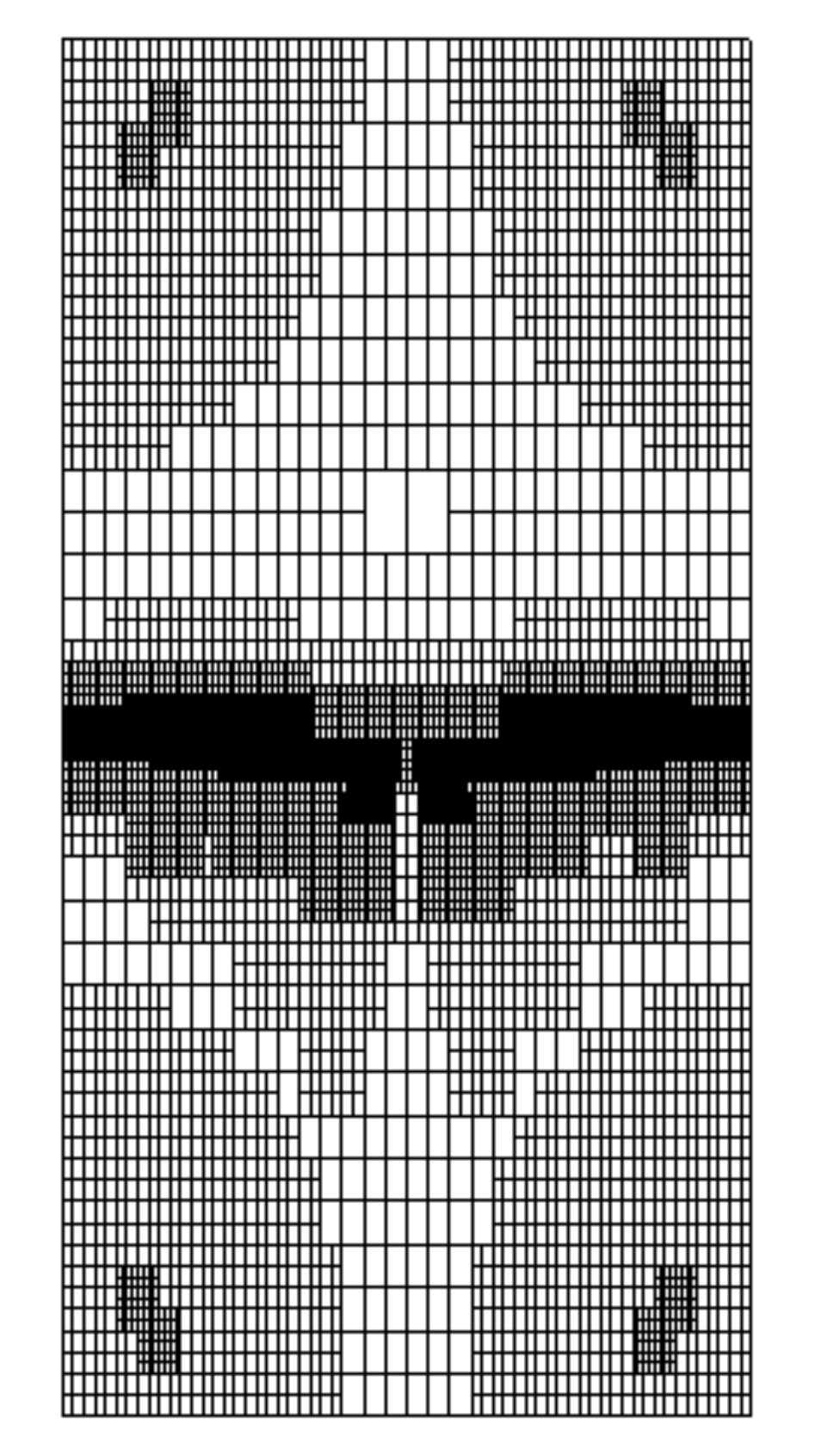}
	} 
	\caption{{\em Example 3 (case (c): $\lambda = \tfrac{3}{2}$)}. 
	Meshes obtained on the refinement steps 5--7 for 
	$u_h \in S^{2}_{h}$, $\boldsymbol{y}_h \in \oplus^2 S^{3}_{h}$, and $w_h \in S^{3}_{h}$.
}
\label{fig:unit-domain-example-35-ed-md-distribution}
\end{figure}

\subsection{Example 5: quarter-annulus domain extended in time}
\label{ex:example-5-quarter-annulus}

In the last example, we test the problem defined in the three-dimensional 
{ space-time cylinder} $Q = \Omega_{ \,
\begin{tikzpicture}[scale=0.1]
\draw (0:1cm) -- (0:2cm)
arc (0:90:2cm) -- (90:1cm)
arc (90:0:1cm) -- cycle;
\end{tikzpicture}} \times (0, T)$, where  
$\Omega_{ 
\begin{tikzpicture}[scale=0.1]
\draw (0:1cm) -- (0:2cm)
arc (0:90:2cm) -- (90:1cm)
arc (90:0:1cm) -- cycle;
\end{tikzpicture}}
$ 
is represented by a quarter-annulus, which extended form $t = 0$ till $t = T = 1$. 
The exact solution is defined by
\begin{alignat*}{4}
u(x, y, t) 	& = (1 - x) \, x^2 \, (1 - y) \, y^2 \, (1 - t) \, t^2, 
		& \quad  (x, y, t) & \in 
\overline{Q} := \overline{\Omega}_{ 
\begin{tikzpicture}[scale=0.1]
\draw (0:1cm) -- (0:2cm)
arc (0:90:2cm) -- (90:1cm)
arc (90:0:1cm) -- cycle;
\end{tikzpicture}}
\times [0, 1].
\end{alignat*}
\begin{figure}[!t]
	\centering
	\includegraphics[scale=0.5]{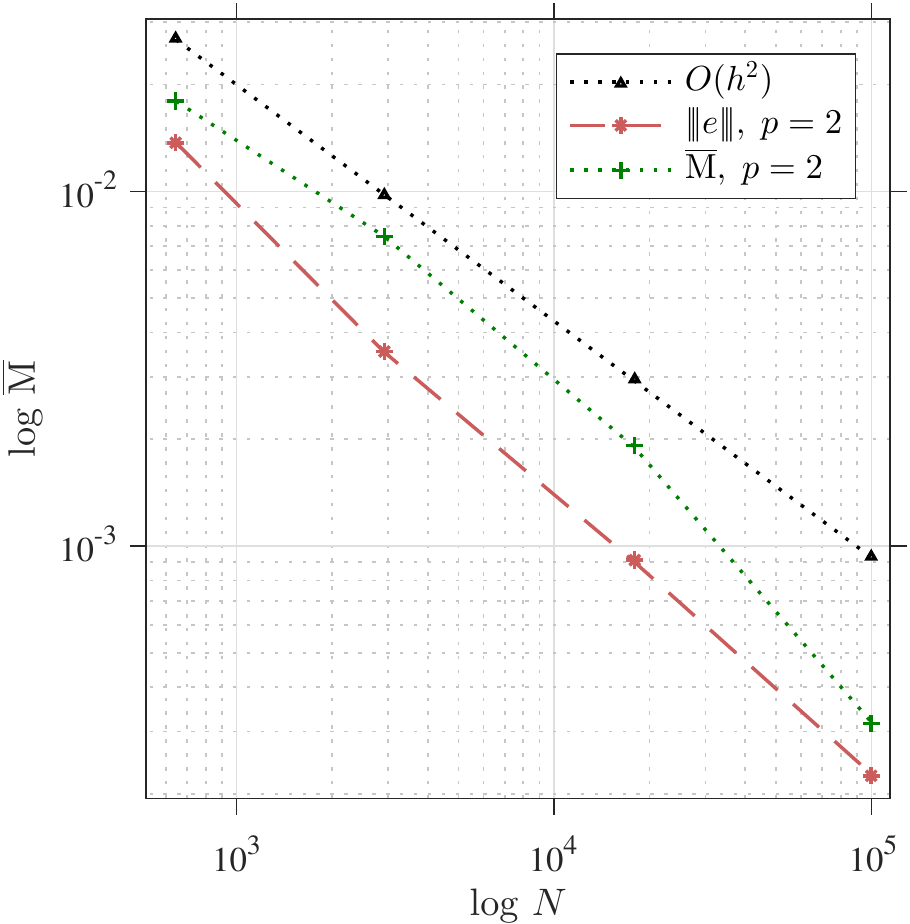}
	\label{fig:example-35-exact-solution}
	\caption{{\em Example 5}. 
	{ The error order of convergence} for $u \in S_h^2$.}
	\label{ex:example-23-quarter-annulus}
\end{figure}
The RHS $f(x, y, t)$, $(x, y, t) \in 
Q := {\Omega}_{ 
\begin{tikzpicture}[scale=0.1]
\draw (0:1cm) -- (0:2cm)
arc (0:90:2cm) -- (90:1cm)
arc (90:0:1cm) -- cycle;
\end{tikzpicture}} \times (0, 1)$, is computed based on the substitution of $u$ into the equation 
\eqref{eq:equation} and the Dirichlet boundary condition is defined as {$u_D = u$ on $\Sigma$.}

%---------------------------------------------------------------------------------------------------------------------------%
The initial mesh for the test is generated by one uniform refinement $N_{\rm ref, 0} = 1$, 
the bulk marking parameter is set to $\sigma =0.4$. The error order of convergence is illustrated in 
Figure \ref{ex:example-23-quarter-annulus}, which corresponds to the theoretical expectation.
We start the analysis from Table \ref{tab:unit-domain-example-23-error-majorant-adapt-ref}. 
It is easy to see that all majorants have adequate performance, taking into account 
that the auxiliary functions $\boldsymbol{y}_h  \in \oplus^2 S_{3h}^{5}$ and $w_h \in S^{5}_{3h}$. 
Table \ref{tab:unit-domain-example-23-time-expenses-adapt-ref} confirms that 
assembling and solving of the systems reconstructing d.o.f. of $u_h$ requires more time than assembling 
and solving routines for the systems generating $\boldsymbol{y}_h $ and $w_h$.

%---------------------------------------------------------------------------------------------------------------------------%
Figure \ref{fig:time-singularity-1d-t-example-23} presents an evolution of the adaptive meshes 
discretising expanded in time quarter-annulus $Q$. From the plots presented, we can see 
that the refinement is localised in the area close to the lateral surface of the quarter-annulus with the 
radius two. This happens due to fast changes in the solution appearing close to this
`outer' surface. 

\begin{table}[!t]
\scriptsize
\centering
\newcolumntype{g}{>{\columncolor{gainsboro}}c} 	
\newcolumntype{k}{>{\columncolor{lightgray}}c} 	
\newcolumntype{s}{>{\columncolor{silver}}c} 
\newcolumntype{a}{>{\columncolor{ashgrey}}c}
\newcolumntype{b}{>{\columncolor{battleshipgrey}}c}
\begin{tabular}{c|cgg|g|cc|gc}
\parbox[c]{0.4cm}{\centering \# ref. } & 
\parbox[c]{1.2cm}{\centering  $\| \nabla_x e \|_Q$}   & 	  
%\parbox[c]{1.4cm}{\centering $\overline{\rm M}^{\rm I}$ } &    
\parbox[c]{1.0cm}{\centering $I_{\rm eff} (\overline{\rm M}^{\rm I})$ } & 
%\parbox[c]{1.4cm}{\centering $\overline{\rm M}^{\rm I\!I}$ } &    
\parbox[c]{1.0cm}{\centering $I_{\rm eff} (\overline{\rm M}^{\rm I\!I})$ } & 
\parbox[c]{1.2cm}{\centering  $|\!|\!|  e |\!|\!|_{l\!o\!c\!,h}$ }   & 	  
%\parbox[c]{1.4cm}{\centering $\overline{\rm M}^{\rm I}_{h}$ } &    
%\parbox[c]{1.4cm}{\centering $I_{\rm eff} (\overline{\rm M}^{\rm I}_{h})$ } & 
\parbox[c]{1.2cm}{\centering  $|\!|\!|  e |\!|\!|_{\mathcal{L}}$ }   & 	  
%\parbox[c]{1.4cm}{\centering ${{\rm E \!\!\! Id}}$  }&    
\parbox[c]{1.0cm}{\centering$I_{\rm eff} ({{\rm E \!\!\! Id}})$ } & 
\parbox[c]{1.2cm}{\centering e.o.c. ($|\!|\!|  e |\!|\!|_{l\!o\!c\!,h}$)} & 
\parbox[c]{1.2cm}{\centering e.o.c. ($|\!|\!|  e |\!|\!|_{\mathcal{L}}$)} \\[3pt]
\midrule
   3 &     1.3711e-02 &         1.31 &         1.20 &     1.3722e-02 & %         1.31 &     
   2.5548e-01 &         1.00 &     4.66 &     2.14 \\
   4 &     3.5322e-03 &         2.12 &         1.74 &     3.5331e-03 & %         2.12 &     
   1.2719e-01 &         1.00 &     2.70 &     1.39 \\
   5 &     9.0289e-04 &         2.11 &         1.90 &     9.0425e-04 & %        2.10 &     
   5.9632e-02 &         1.00 &     2.25 &     1.25 \\
   6 &     2.2747e-04 &         1.40 &         1.69 &     2.2749e-04 & %         1.40 &     
   3.1509e-02 &         1.00 &     2.41 &     1.11 \\
\end{tabular}
\caption{{\em Example 5}. 
Efficiency of $\overline{\rm M}^{\rm I}$, $\overline{\rm M}^{\rm I\!I}$, and ${{\rm E \!\!\! Id}}$ 
for the bulk marking parameter $\sigma =0.4$ for
$u_h \in S^{2}_{h}$, 
$\boldsymbol{y}_h \in \oplus^3 S^{3}_{h}$, and 
$w_h \in S^{5}_{3h}$ ($N_{\rm ref, 0}$ = 4).}
\label{tab:unit-domain-example-23-error-majorant-adapt-ref}
\end{table}
   
\begin{table}[!t]
\scriptsize
\centering
\newcolumntype{g}{>{\columncolor{gainsboro}}c} 	
\begin{tabular}{c|ccc|cgg|cgg|c}%|cc|cc}
& \multicolumn{3}{c|}{ d.o.f. } 
& \multicolumn{3}{c|}{ $t_{\rm as}$ }
& \multicolumn{3}{c|}{ $t_{\rm sol}$ } 
& $\tfrac{t_{\rm appr.}}{t_{\rm er.est.}}$ \\
%& \multicolumn{4}{c}{ $t_{\rm e/w}$} \\
\midrule
\parbox[c]{0.2cm}{\# ref. } & 
\parbox[c]{0.4cm}{\centering $u_h$ } &  
\parbox[c]{0.4cm}{\centering $\boldsymbol{y}_h$ } &  
\parbox[c]{0.4cm}{\centering $w_h$ } & 
\parbox[c]{0.8cm}{\centering $u_h$ } & 
\parbox[c]{0.8cm}{\centering $\boldsymbol{y}_h$ } & 
\parbox[c]{0.8cm}{\centering $w_h$ } & 
\parbox[c]{0.8cm}{\centering $u_h$ } & 
\parbox[c]{0.8cm}{\centering $\boldsymbol{y}_h$ } & 
\parbox[c]{0.8cm}{\centering $w_h$ } &
%\tfrac{t_{\rm appr.}}{t_{\rm e. est.}}
\\
\bottomrule
\multicolumn{10}{l}{ \rule{0pt}{3ex}   
(a) $u_h \in S^{2}_{3h}$, 
$\boldsymbol{y}_h \in S^{5}_{3h} \oplus S^{3}_{h}$, and 
$w_h \in S^{5}_{3h}$} \\[2pt]
\toprule
%%
%  1 &         64 &       1029 &        343 &   1.15e-01 &   1.29e+01 &   5.70e+00 &         1.47e-04 &         4.40e-01 &         2.17e-03 &             0.01 \\
%   2 &        216 &       1029 &        343 &   8.59e-01 &   1.63e+01 &   7.05e+00 &         9.29e-04 &         5.89e-01 &         5.12e-03 &             0.04 \\
   3 &        646 &       1029 &        343 &   7.03e+00 &   1.47e+01 &   6.17e+00 &         1.10e-02 &         5.83e-01 &         2.13e-03 &             0.33 \\
   4 &       2910 &       1029 &        343 &   4.04e+01 &   1.21e+01 &   5.74e+00 &         4.19e-01 &         5.45e-01 &         2.85e-03 &             2.22 \\
   5 &      17881 &       2187 &        729 &   2.75e+02 &   8.16e+01 &   4.08e+01 &         2.75e+01 &         4.00e+00 &         4.72e-02 &             2.39 \\
   6 &      99842 &       6210 &       2070 &   2.90e+03 &   2.33e+03 &   1.51e+03 &         1.26e+03 &         8.88e+01 &         3.42e-01 &             1.06 \\
   \midrule
    &       &         &    &
    \multicolumn{3}{c|}{ $t_{\rm as} (u_h)$ \;:\; $t_{\rm as} (\boldsymbol{y}_h)$ \;:\; $t_{\rm as} (w_h)$ } &      
    \multicolumn{3}{c|}{\; $t_{\rm sol} (u_h)$ \;:\; $t_{\rm sol} (\boldsymbol{y}_h)$  \;:\;  $t_{\rm sol} (w_h)$\;} & \\%&    
%    \multicolumn{2}{c|}{\; $t_{\rm e/w} (\| \nabla_x e \|)$ \; : \; $t_{\rm e/w} (\overline{\rm M}^{\rm I})$ \; } & 
%   \multicolumn{2}{c}{ $t_{\rm e/w} (|\!|\!| e |\!|\!|_{\mathcal{L}})$ \quad : \quad  $t_{\rm e/w} ({{\rm E \!\!\! Id}})$ \quad} \\
 \midrule
 	 & 	 & 	 & 	 &       1.91 &       1.54 &       1.00 &          3683.81 &           259.30 &             1.00 &             1.06 \\
%	 %
\end{tabular}
\caption{{\em Example 5}. 
Assembling and solving time (in seconds) spent for the systems generating d.o.f. of 
$u_h$, $\boldsymbol{y}_h$, and $w_h$ with bulk parameter $\sigma =0.4$ ($N_{\rm ref, 0}$ = 4).}
\label{tab:unit-domain-example-23-time-expenses-adapt-ref}
\end{table}

\begin{figure}[!t]
	\centering
	\subfloat[ref. 1]{%\frame{
	\includegraphics[width=4cm, trim={2cm 2cm 3cm 3cm}, clip]{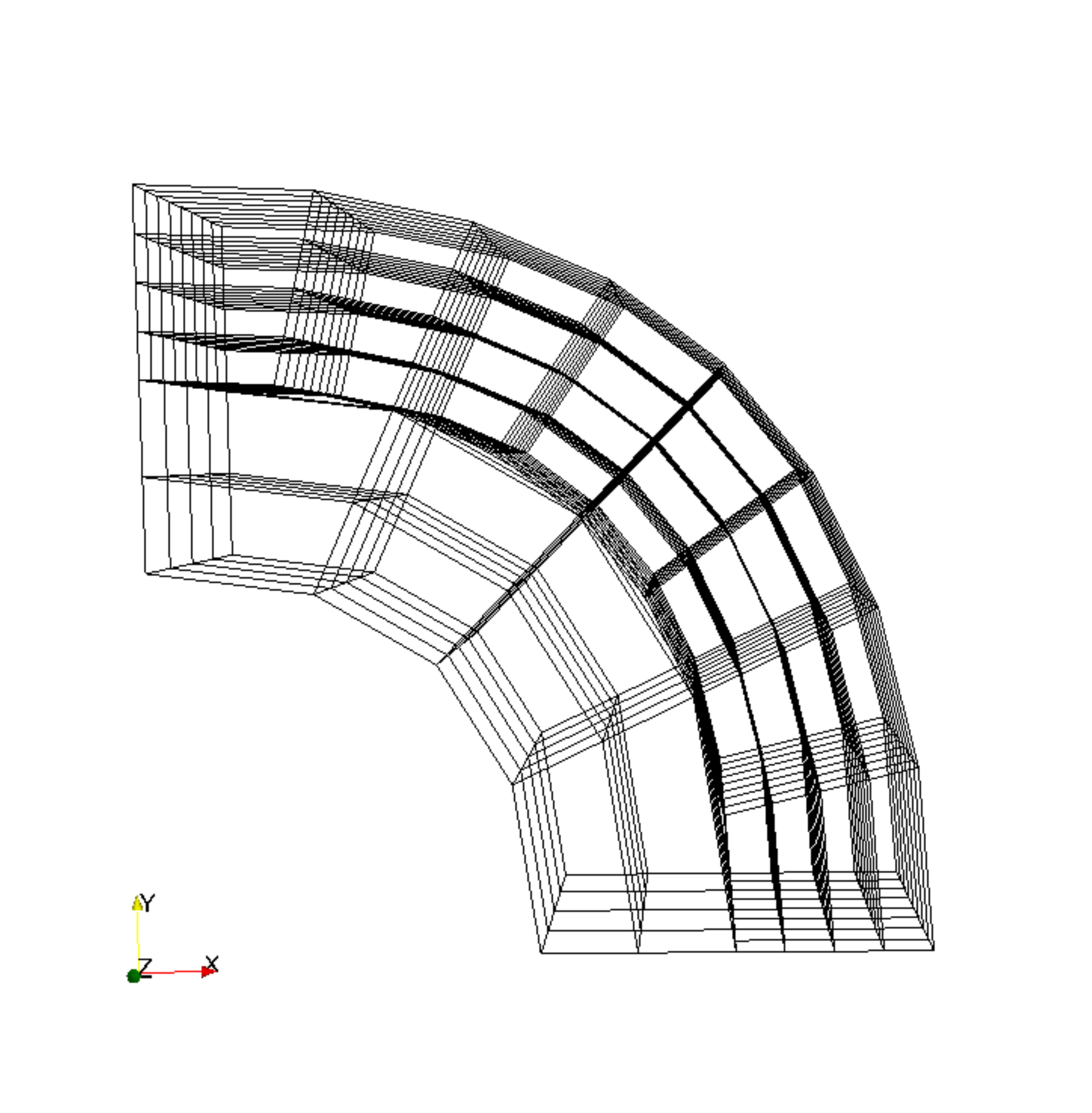}
	\label{fig:example-35-exact-solution}
	}%}
	\subfloat[ref. 2]{
	\includegraphics[width=4cm, trim={2cm 2cm 3cm 3cm}, clip]{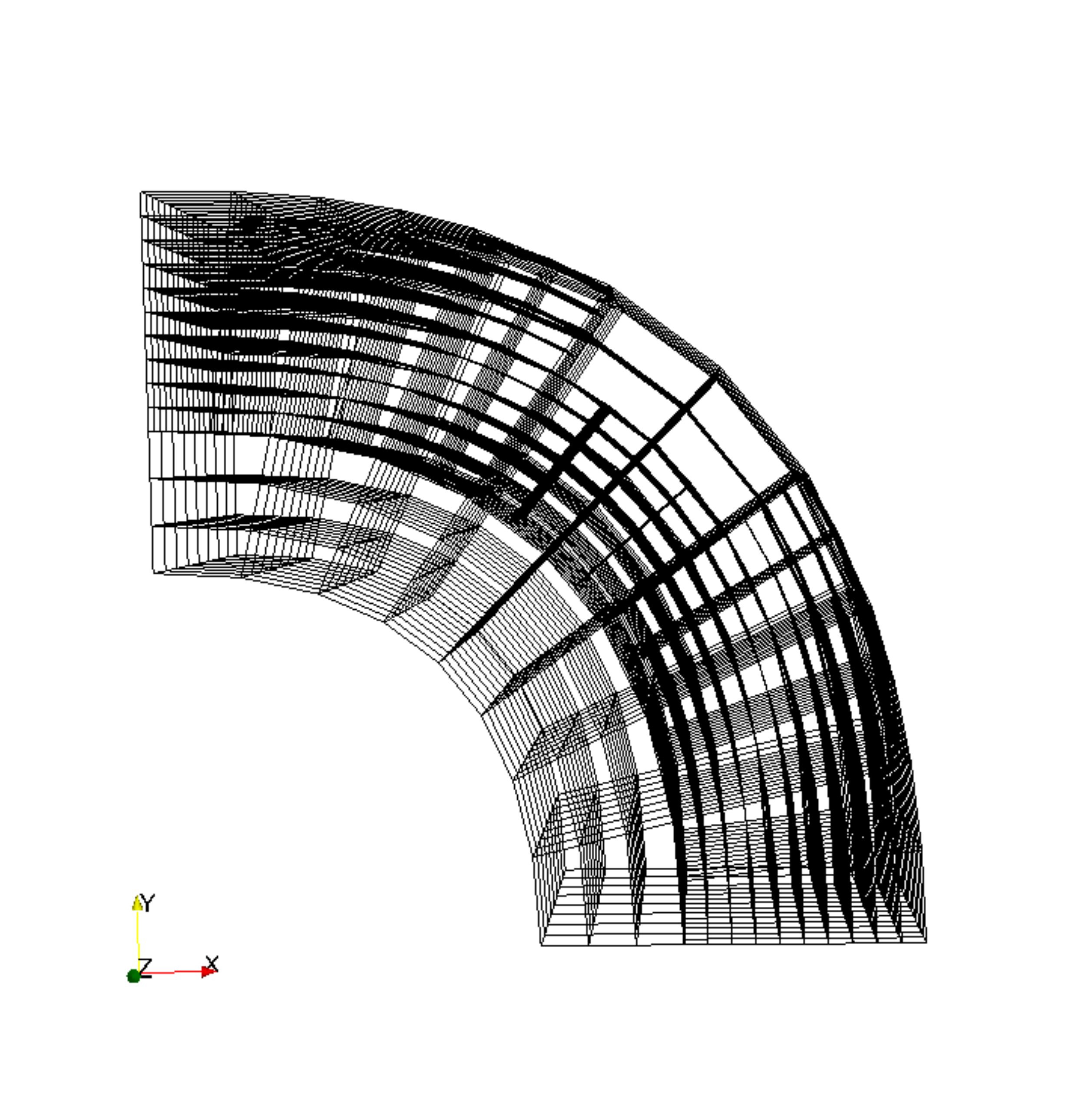}
	\label{fig:example-35-exact-solution}
	}
	\quad
	\subfloat[ref. 3]{
	\includegraphics[width=4cm, trim={2cm 2cm 3cm 3cm}, clip]{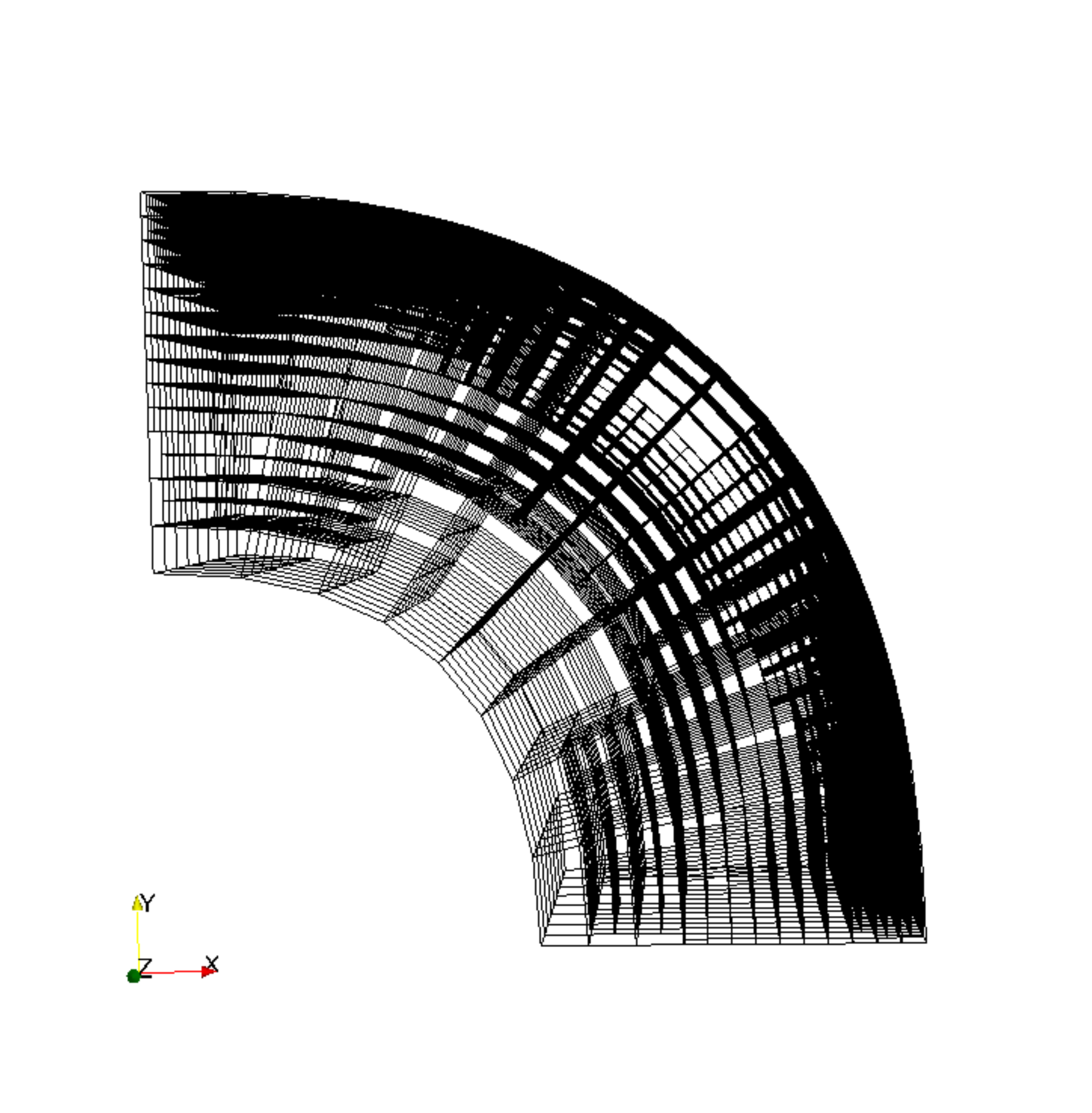}
	\label{fig:example-33-exact-solution}
	}		
	\caption{{\em Example 5}. 
	Mesh evolution for refinement steps 1--3 for marking parameter $\sigma = 0.6$.}
	\label{fig:time-singularity-1d-t-example-23}
\end{figure}

\section{Conclusions}
\label{sec:conclusion}
%---------------------------------------------------------------------------------------------------------------------------%
We derived a new locally stabilized space-time IgA schemes for parabolic I-BVPs, where
global scaling $h$ 
in the upwind test functions
is replaced by a local scaling that depends on the local element size 
$h_K$. Adaptive mesh refinement is based on error indicators generated by functional type a posteriori error estimates,
which naturally use specific features 
%(regularity of approximations) 
and advantages  of the IgA method.
Since error  majorants of the functional type  are presented by integrals formed
by element-wise contributions, they can  efficiently be used for indication
of the local errors and subsequent mesh refinement.
We consider a fully unstructured space-time adaptive IgA scheme and use localised THB-splines for the
mesh refinement. 
Finally, we illustrated the reliability 
and efficiency of the presented a posterior error estimates in a series  of examples exhibiting 
different features of exact solutions.
Numerical tests performed have demonstrated high efficiency of the approach.
%Also, we 
%made a comparative study of computational 
%expenditures required for assembling the resolving system, 
Moreover, we also made a comparative study of the computational
expenses for assembling the systems,
finding an approximate solution, and 
computing a guaranteed and sufficiently accurate error bounds. In the majority of examples, 
error estimation required much lesser time than the reconstruction of the approximate IgA solution. 
The last but not least item to be mentioned is that the numerical examples 
have confirmed high efficiency of the locally stabilized space-time THB-spline-based methods
used in combination with suitable error indicators and mesh adaptive procedures.
Of course, beside THB-spline, other local 
spline refinement techniques such as mention in the introduction can also be utilized in this adaptive framework.
Adaptive methods should be connected with multigrid or 
multilevel solvers or preconditioners for the algebraic systems that we have to solve since the adaptive procedure 
naturally provides a space-time hierarchy of meshes. 
Preceding experiments on massively parallel computers 
presented in \cite{LMRLangerMooreNeumueller2016a}
show that even algebraic multigrid preconditioners
in connection with GMRES accelerations result 
in very efficient solvers for very huge systems 
with billions of space-time unknowns arising from (3+1)d examples.
It is clear that the approach presented can be extended to 
a wider class of parabolic problems and eddy current problems
in electromagnetics.

\end{document}